\newtheorem{de}{Definition}
\newtheorem{thm}{Theorem}
\newtheorem{lem}{Lemma}
\newtheorem{cor}{Corollary}
\newtheorem{prop}{Proposition}
\newcommand{\beqn}{\begin{equation}}
\newcommand{\eeqn}{\end{equation}}
\newcommand{\barr}{\begin{array}}
\newcommand{\earr}{\end{array}}
\newcommand{\beqa}{\begin{eqnarray*}}
\newcommand{\eeqa}{\end{eqnarray*}}
\begin{document}

\title{Zeros of Orthogonal Polynomials Generated by Canonical Perturbations on Standard Measure 
\thanks{This work has been done in the framework of a joint project of a joint project of %
Direcci\'on General de Investigaci\'on, Ministerio de Educaci\'on y Ciencia
of Spain and the Brazilian Science Foundation CAPES, Project CAPES/DGU 160/08. 
The work of the first and second authors has been supported by Direcci\'on %
General de Investigaci\'on, Ministerio de Ciencia e Innovaci\'on of Spain, grant %
MTM2009-12740-C03-01. The work of the third author has been supported by FAPESP under %
grant 2009/14776-5.}}


\author{Edmundo J. Huertas \and Francisco Marcell\'an \and Fernando R. Rafaeli.}
\institute{Edmundo J. Huertas \at
              Departamento de Matem\'aticas, Escuela Polit\'ecnica Superior \\
              Universidad Carlos III, Legan\'es-Madrid, Spain\\
              \email{ehuertas@math.uc3m.es}           
           \and
           Francisco Marcell\'an \at
              Departamento de Matem\'aticas, Escuela Polit\'ecnica Superior  \\
              Universidad Carlos III, Legan\'es-Madrid, Spain\\
              \email{pacomarc@ing.uc3m.es}           
              second address
           \and
           Fernando R. Rafaeli \at
              Departamento de Ci\^encias de Computa\c{c}\~ao e Estat\'{\i}stica \\
              IBILCE, Universidade Estadual Paulista, Brazil \\
              \email{fe\_ro\_rafaeli@yahoo.com.br}           
}

\date{Received: date / Accepted: date}

\maketitle

\begin{abstract}
In this contribution, the behavior of zeros of orthogonal polynomials
associated with canonical linear spectral transformations of measures
supported in the real line is analyzed. An electrostatic interpretation of
them is given.
\keywords{Orthogonal polynomials \and Connection formula \and distribution of zeros \and
interlacing \and monotonicity \and asymptotic behavior \and electrostatics interpretation}
\subclass{Primary 30C15}
\end{abstract}

\section{Introduction}

\subsection{Basic theory of orthogonal polynomials on the real line}

Let $\mu $ be a positive Borel measure supported on a subset $\Sigma $ of
the real line with infinitely many points and such that 
\begin{equation*}
\int_{\Sigma }|x|^{n}d\mu (x)<\infty ,
\end{equation*}%
for every $n\in \mathbb{Z}_{+}$.

Given a measure $\mu $, we define the standard inner product $\langle \cdot
,\cdot \rangle _{\mu }:\mathbb{P}\times \mathbb{P}\rightarrow \mathbb{R}$ by 
\begin{equation}
\langle p,q\rangle _{\mu }=\int_{\Sigma }p(x)q(x)d\mu (x),\quad p,q\in 
\mathbb{P},  \label{inner-prod-1}
\end{equation}%
where $\mathbb{P}$ is the linear space of the polynomials with real
coefficients, and the corresponding norm $\Vert \cdot \Vert _{\mu }:\mathbb{P%
}\rightarrow \lbrack 0,\infty )$ is given, as usual, by 
\begin{equation}
\Vert p\Vert _{\mu }=\sqrt{\int_{\Sigma }|p(x)|^{2}d\mu (x)},\quad p\in 
\mathbb{P}.  \label{norm-1}
\end{equation}

\setcounter{de}{0}

\begin{de}
Let $\{p_{n}(x)\}_{n\geq 0}$ be a sequence of polynomials such that

\begin{itemize}
\item[1.] $p_{n}(x)$ is a polynomial with degree exactly $n$.

\item[2.] $\langle p_{n},p_{m}\rangle _{\mu }=0$, for $m\neq n$.
\end{itemize}

\noindent $\{p_{n}(x)\}_{n\geq 0}$ is said to be a \textbf{sequence of
standard orthogonal polynomials.} If the leading coefficient of $p_{n} $ is $%
1$, then the sequence is said to be a standard \textbf{monic orthogonal
polynomial sequence (MOPS, in short).}
\end{de}

\setcounter{prop}{0}

\begin{prop}
Each positive Borel measure $\mu $ determines uniquely a standard MOPS.
\end{prop}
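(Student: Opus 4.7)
The plan is to prove existence and uniqueness separately, with existence coming from a Gram--Schmidt procedure applied to the monomial basis $\{1,x,x^{2},\ldots\}$, and uniqueness coming from a short orthogonality argument.

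For \textbf{existence}, I would set $p_{0}(x)=1$ and inductively define
\[
p_{n}(x) = x^{n} - \sum_{k=0}^{n-1} \frac{\langle x^{n},p_{k}\rangle_{\mu}}{\langle p_{k},p_{k}\rangle_{\mu}}\, p_{k}(x), \qquad n\geq 1.
\]
By construction $p_{n}$ is monic of degree $n$, and a direct computation shows $\langle p_{n},p_{j}\rangle_{\mu}=0$ for $0\leq j\leq n-1$. Orthogonality with every $p_{m}$, $m\neq n$, then follows by symmetry. To check this construction is well-defined I need $\langle p_{k},p_{k}\rangle_{\mu}>0$ at every step; this is where the hypothesis that $\Sigma$ has infinitely many points enters. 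Indeed, if a polynomial $p$ of degree $k$ satisfied $\|p\|_{\mu}=0$, then $p$ would vanish $\mu$-almost everywhere on $\Sigma$, hence at infinitely many points, forcing $p\equiv 0$; applied to $p_{k}$, which is monic and therefore nonzero, this gives $\|p_{k}\|_{\mu}>0$.

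For \textbf{uniqueness}, suppose $\{p_{n}\}_{n\geq 0}$ and $\{\tilde{p}_{n}\}_{n\geq 0}$ are two standard MOPS for $\mu$. The difference $r_{n}:=p_{n}-\tilde{p}_{n}$ is a polynomial of degree at most $n-1$, since both $p_{n}$ and $\tilde{p}_{n}$ are monic of degree $n$. Since $\{p_{0},\ldots,p_{n-1}\}$ spans $\mathbb{P}_{n-1}$, both $p_{n}$ and $\tilde{p}_{n}$ are orthogonal to every polynomial in $\mathbb{P}_{n-1}$, hence so is $r_{n}$. Taking the inner product of $r_{n}$ against itself gives $\|r_{n}\|_{\mu}^{2}=0$, and the non-degeneracy argument above forces $r_{n}\equiv 0$, so $p_{n}=\tilde{p}_{n}$ for every $n$.

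The only delicate point in this plan is the non-degeneracy of the inner product on polynomials; once the infinite-support hypothesis is used to secure it, both Gram--Schmidt and the uniqueness step become essentially formal. I would state this non-degeneracy as an auxiliary lemma at the outset and invoke it in both halves of the proof.
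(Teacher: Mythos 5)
Your proof is correct: the paper states this proposition without proof (it is the classical result from \cite{Chi78}), and your Gram--Schmidt construction plus the non-degeneracy of $\langle\cdot,\cdot\rangle_{\mu}$ on $\mathbb{P}$ (which is exactly where the infinite-support hypothesis enters) is the standard argument it implicitly relies on. The only point worth tightening is the step ``vanishes $\mu$-almost everywhere, hence at infinitely many points'': this holds because any set of full $\mu$-measure must meet every neighborhood of every point of $\Sigma=\mathrm{supp}(\mu)$, and $\Sigma$ is infinite, so the zero set of $p$ is infinite and $p\equiv 0$.
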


A MOPS is generated by a three term recurrence relation. It will play an
important role in the sequel.

\setcounter{prop}{1}

\begin{prop}[Three Term Recurrence Relation TTRR]
Let $\{p_{n}(x)\}_{n\geq 0}$ be a MOPS. They satisfy a three-term recurrence
relation 
\begin{equation}
p_{n+1}(x)=(x-\beta _{n})p_{n}(x)-\gamma _{n}p_{n-1}(x),\quad n\geq 0,
\label{Recurrence}
\end{equation}%
with initial conditions $p_{0}(x)=1$ and $p_{-1}(x)=0$. The recurrence
coefficients are given by 
\begin{equation*}
\beta _{n}=\dfrac{\langle xp_{n},p_{n}\rangle _{\mu }}{\Vert p_{n}\Vert
_{\mu }^{2}},\ n\geq 0,
\end{equation*}%
and 
\begin{equation*}
\gamma _{n}=\dfrac{\Vert p_{n}\Vert _{\mu }^{2}}{\Vert p_{n-1}\Vert _{\mu
}^{2}}>0, n\geq 1.
\end{equation*}
\end{prop}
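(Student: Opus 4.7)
The plan is to expand $xp_n(x)$, which is a polynomial of degree $n+1$, in the basis $\{p_0, p_1, \ldots, p_{n+1}\}$ provided by the MOPS, and then to kill most of the expansion coefficients via orthogonality. Writing
$$xp_n(x) = \sum_{k=0}^{n+1} c_{n,k}\, p_k(x),$$
the orthogonality of $\{p_k\}$ immediately yields
$$c_{n,k} = \frac{\langle xp_n, p_k\rangle_\mu}{\|p_k\|_\mu^2}.$$

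The crucial observation is that multiplication by $x$ is symmetric with respect to $\langle\cdot,\cdot\rangle_\mu$, because the inner product in (\ref{inner-prod-1}) is integration of the pointwise product against $d\mu$. Hence $\langle xp_n,p_k\rangle_\mu = \langle p_n, xp_k\rangle_\mu$. For every $k\leq n-2$, the polynomial $xp_k$ has degree at most $n-1$, so it is orthogonal to $p_n$, and consequently $c_{n,k}=0$. Thus only the three coefficients $c_{n,n+1}$, $c_{n,n}$, $c_{n,n-1}$ can survive, and the expansion already has the shape of a three-term recurrence.

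It remains to identify those three coefficients. Comparing leading coefficients on both sides of the expansion forces $c_{n,n+1}=1$, since $p_n$ and $p_{n+1}$ are monic. Direct substitution gives $c_{n,n} = \langle xp_n,p_n\rangle_\mu/\|p_n\|_\mu^2 = \beta_n$. For the last coefficient, symmetry yields $\langle xp_n, p_{n-1}\rangle_\mu = \langle p_n, xp_{n-1}\rangle_\mu$; since $xp_{n-1}$ is monic of degree $n$, we may write $xp_{n-1}(x) = p_n(x) + r(x)$ with $\deg r \leq n-1$, whence $\langle p_n, xp_{n-1}\rangle_\mu = \|p_n\|_\mu^2$. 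Therefore $c_{n,n-1} = \|p_n\|_\mu^2/\|p_{n-1}\|_\mu^2 = \gamma_n$, which is automatically strictly positive as a ratio of squared norms of nonzero polynomials. Moving $c_{n,n-1}p_{n-1}$ and $c_{n,n}p_n$ to the left produces the stated relation (\ref{Recurrence}). The case $n=0$ is absorbed by the convention $p_{-1}(x)=0$, which makes the $\gamma_0$ term drop out and reduces the recurrence to $p_1(x)=(x-\beta_0)p_0(x)$.

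No serious obstacle arises; the only point requiring genuine attention is the self-adjointness of the multiplication operator, which is what distinguishes the \emph{standard} inner product from non-standard (e.g.\ Sobolev) ones where this TTRR generally fails.
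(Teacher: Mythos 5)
Your proof is correct and complete: the expansion of $xp_n$ in the orthogonal basis, the use of the symmetry of multiplication by $x$ with respect to the standard inner product $\langle\cdot,\cdot\rangle_\mu$ to kill the coefficients with $k\leq n-2$, and the identification of $c_{n,n+1}$, $c_{n,n}$, $c_{n,n-1}$ are all carried out properly, including the positivity of $\gamma_n$ and the $n=0$ convention. The paper states this proposition without proof as classical background (deferring to the references such as Chihara), and your argument is precisely the standard one given there, so there is nothing to reconcile.
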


Next, we will introduce the $nth$ Kernel associated with the MOPS $%
\{p_{n}(x)\}_{n\geq 0}.$ It satisfies a reproducing property for every
polynomial of degree at most $n$ as well it can be represented in a simple
way in terms of the polynomials $p_{n}(x)$ and $p_{n+1}(x)$ throught the
Christoffel-Darboux Formula, that can be deduced in a straightforward way
from the three term recurrence relation (see \cite{Chi78}). %
\setcounter{prop}{2}

\begin{prop}
Let $\{p_{n}(x)\}_{n\geq 0}$ be a MOPS. If we denote the $n$th Kernel
polynomial by 
\begin{equation}  \label{Kernel}
K_{n}(x,y)=\sum_{j=0}^{n}\dfrac{p_{j}(y)p_{j}(x)}{\Vert p_{j}\Vert _{\mu
}^{2}},
\end{equation}%
then, for every $n\in \mathbb{N}$, 
\begin{equation*}
K_{n}(x,y)=\dfrac{1}{\Vert p_{n}\Vert _{\mu }^{2}}\dfrac{%
p_{n+1}(x)p_{n}(y)-p_{n}(x)p_{n+1}(y)}{x-y}.
\end{equation*}%
and, if $x=y$ we have the so called confluent form 
\begin{equation*}
K_{n}(x,x)=\frac{p_{n+1}^{\prime }(x)p_{n}(x)-p_{n}^{\prime }(x)p_{n+1}(x)}{%
\left\Vert p_{n}\right\Vert _{\mu }^{2}} .  \label{Kernel-n-confl}
\end{equation*}
\end{prop}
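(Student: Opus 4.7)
The plan is to prove the non-confluent Christoffel-Darboux identity by a telescoping argument built from the three-term recurrence relation~(\ref{Recurrence}), and then to derive the confluent form as the limit $y\to x$ of the non-confluent one.

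First I would write the TTRR evaluated at $x$ and at $y$, multiply the first by $p_j(y)$ and the second by $p_j(x)$, and subtract. The $\beta_j$ terms cancel, leaving
\[
F_j(x,y) = (x-y)\,p_j(x)p_j(y) + \gamma_j\, F_{j-1}(x,y),
\]
where $F_j(x,y):=p_{j+1}(x)p_j(y)-p_j(x)p_{j+1}(y)$. Using $\gamma_j=\|p_j\|_\mu^2/\|p_{j-1}\|_\mu^2$ and dividing through by $\|p_j\|_\mu^2\,(x-y)$, this rearranges to the telescoping form
\[
\frac{F_j(x,y)}{\|p_j\|_\mu^2\,(x-y)} - \frac{F_{j-1}(x,y)}{\|p_{j-1}\|_\mu^2\,(x-y)} = \frac{p_j(x)p_j(y)}{\|p_j\|_\mu^2}.
\]

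Next I would sum this identity for $j=1,\ldots,n$. The base term is furnished by $p_0\equiv 1$ and $p_1(x)=x-\beta_0$, so $F_0(x,y)=x-y$ and thus $F_0(x,y)/(\|p_0\|_\mu^2(x-y))=p_0(x)p_0(y)/\|p_0\|_\mu^2$. The sum therefore collapses to
\[
\frac{F_n(x,y)}{\|p_n\|_\mu^2\,(x-y)} = \sum_{j=0}^{n} \frac{p_j(x)p_j(y)}{\|p_j\|_\mu^2} = K_n(x,y),
\]
which is exactly the Christoffel-Darboux formula. To obtain the confluent form I would let $y\to x$ in this identity; since both numerator and denominator of the quotient vanish, L'H\^opital's rule applied in the variable $y$ gives
\[
\lim_{y\to x}\frac{p_{n+1}(x)p_n(y)-p_n(x)p_{n+1}(y)}{x-y}=p_{n+1}^{\prime}(x)p_n(x)-p_n^{\prime}(x)p_{n+1}(x),
\]
yielding the stated expression for $K_n(x,x)$.

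No genuine difficulty arises, since this is a classical computation. The one step that requires attention is the bookkeeping that makes the argument telescope: one must recognize that the factor $\gamma_j$ produced by the recurrence combines with $1/\|p_j\|_\mu^2$ to give exactly $1/\|p_{j-1}\|_\mu^2$, which is precisely what converts the second-order recursion for $F_j$ into a first-order telescoping identity for $F_j/(\|p_j\|_\mu^2(x-y))$.
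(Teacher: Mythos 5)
Your proof is correct and is precisely the standard telescoping argument from the three-term recurrence that the paper itself alludes to (the paper omits the proof, remarking only that the formula "can be deduced in a straightforward way from the three term recurrence relation" and citing Chihara). The bookkeeping with $\gamma_j=\Vert p_j\Vert_\mu^2/\Vert p_{j-1}\Vert_\mu^2$, the base case $F_0(x,y)=x-y$, and the L'H\^opital passage to the confluent form are all handled correctly.
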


The zeros of standard orthogonal polynomials are the nodes of the Gaussian
quadrature rules and they have nice analytic properties. Next, we summarize
some of them. For more information, see \cite{Chi78}, \cite{NiUv88}, and 
\cite{Sze75} \setcounter{prop}{3}

\begin{prop}
\label{PropZeros} Let $\mu $ be a positive Borel measure defined as above
and $\{p_{n}(x)\}_{n\geq 0}$ the corresponding MOPS. Then,

\begin{itemize}
\item[1.] For each $n\geq 1$, the polynomial $p_{n}(x)$ has $n$ real and
simple zeros in the interior of $C_{0}(\Sigma )$, the convex hull of $\Sigma 
$.

\item[2.] The zeros of $p_{n+1}(x)$ interlace with the zeros of $p_{n}(x)$.

\item[3.] Between any two consecutive zeros of $p_{n}(x)$ there is at least
one zero of $p_{m}(x)$, for $m>n\geq 2$.

\item[4.] If $(\alpha,\beta)\subset C_0(\Sigma)$ with $(\alpha,\beta)\cap
\Sigma=\emptyset$ then at most one zero of each polynomial $p_n(x)$ belongs
to $(\alpha,\beta)$.

\item[5.] Each point of $\Sigma $ attracts zeros of the MOPS. In other
words, the zeros of a MOPS are dense in $\Sigma $.
\end{itemize}
\end{prop}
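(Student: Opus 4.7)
The plan is to treat the five statements as a cascade: properties (1), (4), and (2) follow directly from orthogonality and the Christoffel--Darboux identity; property (3) is a refinement requiring more work; and property (5) uses Gaussian quadrature convergence. I would prove them in the order (1), (4), (2), (3), (5).

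For (1), I would use the classical sign-change argument. Let $y_{1}<\cdots <y_{k}$ be the points in the interior of $C_{0}(\Sigma )$ where $p_{n}$ changes sign on $\Sigma $; set $q(x)=\prod_{j=1}^{k}(x-y_{j})$. If $k<n$, then $p_{n}q$ has constant sign on $\Sigma $, so $\int_{\Sigma }p_{n}q\,d\mu \neq 0$, contradicting orthogonality since $\deg q<n$. Hence $k=n$, all $n$ zeros of $p_{n}$ are real, simple, and lie in the interior of $C_{0}(\Sigma )$. For (4), I would apply the same idea with a twist: if two zeros $x_{i}<x_{i+1}$ of $p_{n}$ lay in a gap $(\alpha ,\beta )$ of $\Sigma $, then $q(x)=p_{n}(x)/[(x-x_{i})(x-x_{i+1})]$ has degree $n-2$, and the integral $\langle p_{n},q\rangle _{\mu }$ equals $\int_{\Sigma }(x-x_{i})(x-x_{i+1})q(x)^{2}d\mu (x)$. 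Since $\Sigma \cap (x_{i},x_{i+1})=\emptyset $, the quadratic factor is nonnegative on $\Sigma $, so the integral is strictly positive, a contradiction. For (2), I would evaluate the confluent Christoffel--Darboux formula at each zero $x_{k}$ of $p_{n}$: since $K_{n}(x_{k},x_{k})>0$ and $p_{n}(x_{k})=0$, we get $p_{n}'(x_{k})p_{n+1}(x_{k})<0$. The simplicity of the zeros makes $p_{n}'$ alternate sign at $x_{1},\ldots ,x_{n}$, so $p_{n+1}$ alternates sign there too, producing a zero in each $(x_{k},x_{k+1})$; the two remaining zeros of $p_{n+1}$ fall outside $[x_{1},x_{n}]$ by a sign comparison with its leading behavior at $\pm \infty $.

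Property (3) is the most delicate. I would argue by contradiction: suppose $x_{i}<x_{i+1}$ are consecutive zeros of $p_{n}$ but $p_{m}$ has no zero in $(x_{i},x_{i+1})$, so $p_{m}$ has constant sign on $(x_{i},x_{i+1})\cap \Sigma $. The natural test polynomial mimics (4): take
\[
Q(x)=\frac{p_{n}(x)}{(x-x_{i})(x-x_{i+1})}\,p_{m}(x),
\]
and aim to pair it against a suitable lower-degree orthogonality constraint, or alternatively build a polynomial of degree $<m$ whose product with $p_{m}$ is nonnegative on $\Sigma $. The technical obstacle, and the one I expect to be the hardest, is controlling the signs of $p_{m}$ on the other components of $\Sigma \setminus (x_{i},x_{i+1})$; this is where one must use that the zeros of $p_{m}$ interlace with those of smaller-degree MOPS members via an iterated application of (2), or invoke a Markov-type separation argument, to conclude.

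Finally, for (5), I would combine property (4) with the convergence of the Gaussian quadrature rule associated with the zeros of $p_{n}$. If some point $x_{0}\in \Sigma $ had a neighborhood $(x_{0}-\varepsilon ,x_{0}+\varepsilon )$ free of zeros of every $p_{n}$, choose a nonnegative continuous function $f$ supported in this neighborhood with $\int_{\Sigma }f\,d\mu >0$; the Gaussian sums $\sum_{k}\lambda _{k}^{(n)}f(x_{k}^{(n)})$ would all vanish while the integral does not, contradicting the standard convergence theorem for Gauss quadratures under our moment assumptions. Property (4) supplies the compatibility required to push these quadrature nodes away from any isolated mass point missed by the zeros.
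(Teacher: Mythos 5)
First, note that the paper itself does not prove this proposition: it is stated as a summary of classical facts with references to Chihara, Nikiforov--Uvarov and Szeg\H{o}, so there is no in-paper argument to compare against. Your proofs of items (1), (4) and (2) are the standard ones and are correct: the sign-change/orthogonality argument for (1), the nonnegativity of $(x-x_{i})(x-x_{i+1})$ on $\Sigma$ for (4), and the confluent Christoffel--Darboux identity $K_{n}(x_{k},x_{k})=-p_{n}'(x_{k})p_{n+1}(x_{k})/\Vert p_{n}\Vert_{\mu}^{2}>0$ for (2).

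The genuine gap is item (3), which you explicitly leave unresolved. The test polynomial $Q(x)=p_{m}(x)\,p_{n}(x)/[(x-x_{i})(x-x_{i+1})]$ paired against orthogonality gives nothing: $\langle p_{m}, p_{n}/((x-x_{i})(x-x_{i+1}))\rangle_{\mu}=0$ holds automatically because the second factor has degree $n-2<m$, so no contradiction can be extracted that way, and trying to control the sign of $p_{m}$ on the components of $\Sigma$ outside $(x_{i},x_{i+1})$ is a dead end. The missing idea is the Gauss--Jacobi quadrature at the zeros $\zeta_{1},\dots,\zeta_{m}$ of $p_{m}$: set $\rho(x)=p_{n}(x)/[(x-x_{i})(x-x_{i+1})]$, so that $p_{n}\rho$ has degree $2n-2\le 2m-1$ and the quadrature is exact, giving $0=\langle p_{n},\rho\rangle_{\mu}=\sum_{k}\lambda_{k}\,p_{n}(\zeta_{k})^{2}/[(\zeta_{k}-x_{i})(\zeta_{k}-x_{i+1})]$ with all $\lambda_{k}>0$. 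If no $\zeta_{k}$ lies in $(x_{i},x_{i+1})$, every term is nonnegative, and since $m>n$ at least one node satisfies $p_{n}(\zeta_{k})\neq 0$, so the sum is strictly positive --- a contradiction. This one identity replaces all the sign bookkeeping you were worried about. A secondary, smaller issue is item (5): the convergence of the Gaussian quadrature sums to $\int f\,d\mu$ for merely continuous $f$ is not automatic under the stated moment hypotheses alone; it requires the measure to be compactly supported or the moment problem to be determinate, and you should say which hypothesis you are invoking (in the applications of this paper $C_{0}(\Sigma)=[\xi,\eta]$ is compact, so the argument does go through there). The closing sentence about property (4) "pushing nodes away from isolated mass points" does no work and should be dropped.
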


Next, we will analyze the behavior of zeros of polynomial of the form $%
f(x)=h_{n}(x)+cg_{n}(x)$

We need the following lemma concerning the behavior and the asymptotics of
the zeros of linear combinations of two polynomials with interlacing zeros
(see \cite[Lemma 1]{BracDimRanga02} and \cite[Lemma 3]{DimitrovMelloRafaeli}%
).

\setcounter{lem}{0}

\begin{lem}
\label{L1L} Let $h_{n}(x)=a(x-x_{1})\cdots(x-x_{n})$ and $%
g_{n}(x)=b(x-\zeta_{1})\cdots(x-\zeta_{n})$ be polynomials with real and
simple zeros, where $a$ and $b$ are real positive constants.

\begin{itemize}
\item[(i)] If 
\begin{equation*}
\zeta _{1}<x_{1}<\cdots <\zeta _{n}<x_{n},
\end{equation*}%
then, for any real constant $c>0$, the polynomial 
\begin{equation*}
f(x)=h_{n}(x)+cg_{n}(x)
\end{equation*}%
has $n$ real zeros $\eta _{1}<\cdots <\eta _{n}$ which interlace with the
zeros of $h_{n}(x)$ and $g_{n}(x)$ in the following way 
\begin{equation*}
\zeta _{1}<\eta _{1}<x_{1}<\cdots <\zeta _{n}<\eta _{n}<x_{n}.
\end{equation*}%
Moreover, each $\eta _{k}=\eta _{k}(c)$ is a decreasing function of $c$ and,
for each $k=1,\ldots ,n$, 
\begin{equation}
\lim_{c\rightarrow \infty }\eta _{k}=\zeta _{k}\quad \text{and}\quad
\lim_{c\rightarrow \infty }c[\eta _{k}-\zeta _{k}]=\dfrac{-h_{n}(\zeta _{k})%
}{g_{n}^{\prime }(\zeta _{k})}.  \label{LimZeros-i}
\end{equation}

\item[(ii)] If 
\begin{equation*}
x_{1}<\zeta _{1}<\cdots <x_{n}<\zeta _{n},
\end{equation*}%
then, for any positive real constant $c>0$, the polynomial 
\begin{equation*}
f(x)=h_{n}(x)+cg_{n}(x)
\end{equation*}%
has $n$ real zeros $\eta _{1}<\cdots <\eta _{n}$ which interlace with the
zeros of $h_{n}(x)$ and $g_{n}(x)$ as follows 
\begin{equation*}
x_{1}<\eta _{1}<\zeta _{1}<\cdots <x_{n}<\eta _{n}<\zeta _{n}.
\end{equation*}%
Moreover, each $\eta _{k}=\eta _{k}(c)$ is an increasing function of $c$
and, for each $k=1,\ldots ,n$, 
\begin{equation}
\lim_{c\rightarrow \infty }\eta _{k}=\zeta _{k}\quad \text{and}\quad
\lim_{c\rightarrow \infty }c[\zeta _{k}-\eta _{k}]=\dfrac{h_{n}(\zeta _{k})}{%
g_{n}^{\prime }(\zeta _{k})}.  \label{LimZeros-ii}
\end{equation}
\end{itemize}
\end{lem}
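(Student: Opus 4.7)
The plan is to handle case (i) in full and note that case (ii) is symmetric. I would structure the argument in three blocks: interlacing via sign changes, monotonicity via implicit differentiation, and limits via the defining equation.

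\textbf{Step 1 (existence and interlacing of $\eta_k$).} The key observation is that $f(\zeta_k)=h_n(\zeta_k)$ and $f(x_k)=c\,g_n(x_k)$, so I only need to determine the signs of $h_n$ at the $\zeta_k$'s and of $g_n$ at the $x_k$'s. Using the interlacing $\zeta_1<x_1<\zeta_2<\cdots<\zeta_n<x_n$ and the positive leading coefficients, a direct count gives
\begin{equation*}
\mathrm{sgn}\,h_n(\zeta_k)=(-1)^{n-k+1},\qquad \mathrm{sgn}\,g_n(x_k)=(-1)^{n-k},
\end{equation*}
so $f(\zeta_k)$ and $f(x_k)$ have opposite signs, producing at least one zero of $f$ in each interval $(\zeta_k,x_k)$. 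Since $a+cb>0$ and the intervals $(\zeta_k,x_k)$ are disjoint, $f$ has exactly degree $n$ and the zeros found exhaust its roots. This yields $\zeta_k<\eta_k<x_k$ for every $k$, which is the claimed interlacing.

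\textbf{Step 2 (monotonicity).} Starting from $h_n(\eta_k(c))+c\,g_n(\eta_k(c))=0$ and differentiating with respect to $c$, I get
\begin{equation*}
\eta_k'(c)=-\frac{g_n(\eta_k)}{f'(\eta_k)}.
\end{equation*}
The sign of $g_n(\eta_k)$ is $(-1)^{n-k}$, since $\eta_k\in(\zeta_k,x_k)\subset(\zeta_k,\zeta_{k+1})$. The sign of $f'(\eta_k)$ is also $(-1)^{n-k}$, because $f$ has positive leading coefficient and simple zeros $\eta_1<\cdots<\eta_n$. Hence $\eta_k'(c)<0$, so $\eta_k$ decreases in $c$.

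\textbf{Step 3 (asymptotics).} Since $\eta_k(c)\in(\zeta_k,x_k)$ is bounded and monotone in $c$, the limit $\eta_k^\ast:=\lim_{c\to\infty}\eta_k(c)$ exists and lies in $[\zeta_k,x_k)$. Rewriting the defining equation as $g_n(\eta_k)=-h_n(\eta_k)/c$ and passing to the limit gives $g_n(\eta_k^\ast)=0$; the only zero of $g_n$ in $[\zeta_k,x_k]$ is $\zeta_k$, so $\eta_k^\ast=\zeta_k$. For the refined asymptotics I would use a first-order Taylor expansion of $g_n$ around $\zeta_k$, which is valid because $\zeta_k$ is a simple zero of $g_n$, to write
\begin{equation*}
-\frac{h_n(\eta_k)}{c}=g_n(\eta_k)=g_n'(\zeta_k)(\eta_k-\zeta_k)+O\bigl((\eta_k-\zeta_k)^2\bigr),
\end{equation*}
and then solve for $c(\eta_k-\zeta_k)$, letting $c\to\infty$ to obtain the stated value $-h_n(\zeta_k)/g_n'(\zeta_k)$.

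Case (ii) is proved by exactly the same three-step scheme with the roles of the two zero sequences exchanged; the sign parities flip consistently, so monotonicity becomes increasing and the limit formula acquires the indicated positive sign. The only mildly delicate point in the whole argument is the bookkeeping of signs in Steps 1 and 2, but once the parities $(-1)^{n-k}$ are identified, everything else reduces to routine calculus. The rest is, I believe, straightforward.
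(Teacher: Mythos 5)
Your proof is correct, and it follows the standard argument (sign evaluation of $f$ at the two families of interlacing zeros, implicit differentiation for monotonicity, and the defining equation $g_n(\eta_k)=-h_n(\eta_k)/c$ for the limits); the sign parities $(-1)^{n-k}$ and $(-1)^{n-k+1}$ are all bookkept correctly, and the symmetry claim for case (ii) checks out. Note that the paper itself does not prove this lemma but only cites it from the references of Bracciali--Dimitrov--Sri Ranga and Dimitrov--Mello--Rafaeli, where essentially this same argument appears.
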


In the last years some attention has been paid to the so called canonical
spectral transformations of measures. Some authors have analyzed them from
the point of view of Stieltjes functions associated with such a kind of
perturbations (see \cite{Zhe97})or from the relation between the
corresponding Jacobi matrices (see \cite{Yoon}). Our present contribution is
focused on the behaviour of zeros of MOPS associated with such
transformations of the measures. In particular, we are interested in the
Christoffel and Uvarov transformations which are given as a multiplication
of the measure by a positive linear polynomial in its support and the
addition of a Dirac mass at a point outside the support, respectively.

The structure of the manuscript is as follows. In Section 2 the
representation of the perturbed MOPS in terms of the initial ones is done.
In Section 3 we analyze the behaviour of the zeros of the MOPS when an
Uvarov transform is introduced. In particular, we obtain such a behavior
when the mass $N$ tends to infinity as well as we characterize the values of
the mass $N$ such the smallest (respectively, the largest) zero of these
MOPS is located outside the support of the measure. In Section 4, we check
these results in the cases of the Jacobi-type and Laguerre-type orthogogonal
polynomials introduced by T. H. Koornwinder (\cite{Koornwinder84}). Section
5 is devoted to the electrostatic interpretation of the zero distribution as
equilibrium points in a logarithmic potential interaction under the action
of an external field.We analyze such an equilibrium problem when the mass
point is located on the boundary or in the exterior of the support of the
measure, respectively.

\section{Canonical perturbations of a measure}

Let $\{p_{n}(x)\}_{n\geq 0}$ be the MOPS with respect to a positive Borel
measure $\mu $ defined as above. We will consider some canonical
perturbations of the measure, which are called spectral linear
transformations (see \cite{Zhe97} and \cite{Yoon}). \vspace{0.4cm}


\subsection{Christoffel perturbation}

Let $\{p_{n}^{\ast }(a;x)\}_{n\geq 0}$ be the MOPS associated with the
measure 
\begin{equation*}
d\mu ^{\ast }=(x-a)d\mu ,
\end{equation*}%
with $a\not\in C_{0}(\Sigma )$. This means that $p_{n}(a)\neq 0$ for every $%
n\geq 1$.

The polynomial $p_{n}^{\ast }(a;x)$ is given by (see \cite[(7.3)]{Chi78}) 
\begin{equation}  \label{p1}
p_{n}^{\ast }(a;x)=\dfrac{1}{x-a}\left[ p_{n+1}(x)-\dfrac{p_{n+1}(a)}{%
p_{n}(a)}p_{n}(x)\right] =\dfrac{\Vert p_{n}\Vert _{\mu }^{2}}{p_{n}(a)}%
K_{n}(a,x),
\end{equation}%
i.e., $p_{n}^{\ast }(a;x)$ is a monic kernel polynomial. Notice that $%
p_{n}^{\ast }(a;a)\neq 0$. Let $x_{n,k}$ and $x_{n,k}^{\ast }:=x_{n,k}^{\ast
}(a)$ be the zeros of $p_{n}(x)$ and $p_{n}^{\ast }(a;x)$, respectively, all
arranged in an increasing order, and assume that $C_{0}(\Sigma )=[\xi ,\eta
] $. From (\ref{p1}) and Proposition \ref{PropZeros} (item 2) the following
interlacing property of these zeros holds

\begin{itemize}
\item[(i)] If $a\leq \xi $, then $p_{n+1}(a)/p_{n}(a)<0$, and%
\begin{equation*}
Sign\left[ p_{n}^{\ast }(a;x_{n+1,k})\right] =Sign\left[ p_{n}(x_{n+1,k})%
\right] ,\ k=1,\ldots ,n+1,
\end{equation*}%
as well as 
\begin{equation*}
Sign\left[ p_{n}^{\ast }(a;x_{n,k})\right] =Sign\left[ p_{n+1}(x_{n,k})%
\right] ,\ k=1,\ldots ,n.
\end{equation*}%
Therefore 
\begin{equation}
x_{n+1,1}<x_{n,1}<x_{n,1}^{\ast }<x_{n+1,2}<\cdots <x_{n,n}<x_{n,n}^{\ast
}<x_{n+1,n+1};  \label{interlace-i}
\end{equation}

\item[(ii)] If $a\geq \eta $, then $p_{n+1}(a)/p_{n}(a)>0$ and, as a
consequence, 
\begin{equation*}
Sign\left[ p_{n}^{\ast }(a;x_{n+1,k})\right] =Sign\left[ p_{n}(x_{n+1,k})%
\right] ,\ k=1,\ldots ,n+1,
\end{equation*}%
and 
\begin{equation*}
Sign\left[ p_{n}^{\ast }(a;x_{n,k})\right] =-Sign\left[ p_{n+1}(x_{n,k})%
\right] ,\ k=1,\ldots ,n.
\end{equation*}%
Therefore 
\begin{equation}
x_{n+1,1}<x_{n,1}^{\ast }<x_{n,1}<\cdots <x_{n+1,n}<x_{n,n}^{\ast
}<x_{n,n}<x_{n+1,n+1}.  \label{interlace-ii}
\end{equation}
\end{itemize}


\subsection{Iterated-Christoffel}

Let $\{p_{n}^{\ast \ast }(a;x)\}_{n\geq 0}$ be the MOPS associated with the
measure%
\begin{equation*}
d\mu ^{\ast \ast }=(x-a)^{2}d\mu ,
\end{equation*}%
with $a\not\in C_{0}(\Sigma )$. Using (\ref{p1}) we deduce that 
\begin{eqnarray}
p_{n}^{\ast \ast }(a;x) &=&\dfrac{1}{x-a}\left[ p_{n+1}^{\ast }(a;x)-\dfrac{%
p_{n+1}^{\ast }(a;a)}{p_{n}^{\ast }(a;a)}p_{n}^{\ast }(a;x)\right]  \notag \\
&=&\dfrac{\Vert p_{n}^{\ast }\Vert _{\mu ^{\ast }}^{2}}{p_{n}^{\ast }(a;a)}%
K_{n}^{\ast }(a,x)  \label{p2} \\
&=&\dfrac{1}{(x-a)^{2}}\left[ p_{n+2}(x)-d_{n}p_{n+1}(x)+e_{n}p_{n}(x)\right]
,  \notag
\end{eqnarray}%
where $K_{n}^{\ast }(a,x)$ is the Kernel polynomial associated with $%
p_{n}^{\ast }(a;x)$, and 
\begin{eqnarray*}
d_{n} &=&\dfrac{p_{n+2}(a)}{p_{n+1}(a)}+\dfrac{p_{n+1}^{\ast }(a;a)}{%
p_{n}^{\ast }(a;a)}=\dfrac{p_{n+2}(a)+p_{n}(a)}{p_{n+1}(a)}e_{n}, \\
e_{n} &=&\dfrac{p_{n+1}^{\ast }(a;a)}{p_{n}^{\ast }(a;a)}\dfrac{p_{n+1}(a)}{%
p_{n}(a)}=\dfrac{\Vert p_{n+1}\Vert _{\mu }^{2}}{\Vert p_{n}\Vert _{\mu }^{2}%
}\dfrac{K_{n+1}(a,a)}{K_{n}(a,a)}>0.
\end{eqnarray*}

Notice that $p_{n}^{\ast \ast }(a;a)\neq 0$. Let denote by $x_{n,k}^{\ast
\ast}:=x_{n,k}^{\ast \ast }(a)$ the zeros of $p_{n}^{\ast \ast }(a;x)$,
arranged in an increasing order. Then replacing (\ref{Recurrence}) in (\ref%
{p2}) we obtain 
\begin{equation}
p_{n}^{\ast \ast }(a;x)=\dfrac{1}{(x-a)^{2}}\left[ (x-\beta
_{n+1}-d_{n})p_{n+1}(x)+(e_{n}-\gamma _{n+1})p_{n}(x)\right] .  \label{p3}
\end{equation}%
On the other hand, 
\begin{equation}
e_{n}-\gamma _{n+1}=\dfrac{\Vert p_{n+1}\Vert _{\mu }^{2}}{\Vert p_{n}\Vert
_{\mu }^{2}}\left( \dfrac{K_{n+1}(a,a)}{K_{n}(a,a)}-1\right) >0.
\label{en-gamma}
\end{equation}%
Evaluating $p_{n}^{\ast \ast }(a;x)$ at the zeros $x_{n+1,k}$, from (\ref{p3}%
) and (\ref{en-gamma}), we get 
\begin{equation}
Sign\left[ p_{n}^{\ast \ast }(a;x_{n+1,k})\right] =Sign\left[
p_{n}(x_{n+1,k})\right] ,\ k=1,\ldots ,n+1.  \label{Z2}
\end{equation}%
Thus, from (\ref{Z2}) and Proposition \ref{PropZeros} (item 2) we obtain the
following interlacing property:

\setcounter{thm}{0}

\begin{thm}
\label{T1} The inequalities 
\begin{equation}  \label{EqT1}
x_{n+1,1}<x_{n,1}^{**}<x_{n+1,2}<x_{n,2}^{**}<%
\cdots<x_{n+1,n}<x_{n,n}^{**}<x_{n+1,n+1}
\end{equation}
hold for every $n\in\mathbb{N}$.
\end{thm}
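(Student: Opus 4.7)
The bulk of the work has in fact been carried out in the derivation of equations (\ref{p3}), (\ref{en-gamma}) and (\ref{Z2}); what remains is a clean sign‑counting argument. My plan is therefore to take (\ref{Z2}) as the starting point and combine it with the classical interlacing of the zeros of $p_n(x)$ and $p_{n+1}(x)$ (item 2 of Proposition \ref{PropZeros}) to conclude that $p_n^{**}(a;x)$ must change sign in each of the $n$ open intervals determined by the consecutive zeros of $p_{n+1}(x)$.

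First, I would pin down the sign pattern of $p_n$ at the nodes $x_{n+1,k}$. Since $p_n(x)$ and $p_{n+1}(x)$ are monic with real simple zeros which interlace, between any two consecutive zeros $x_{n+1,k}$ and $x_{n+1,k+1}$ there lies exactly one zero of $p_n$, so $p_n(x_{n+1,k})$ alternates in sign as $k$ runs from $1$ to $n+1$. Normalising by the rightmost value $p_n(x_{n+1,n+1})>0$ (both polynomials are monic and $x_{n+1,n+1}$ exceeds all zeros of $p_n$), we have
\[
\operatorname{Sign}\bigl[p_n(x_{n+1,k})\bigr]=(-1)^{n+1-k},\qquad k=1,\dots,n+1.
\]

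Next, I would invoke (\ref{Z2}) to transfer this sign pattern to $p_n^{**}(a;\,\cdot\,)$:
\[
\operatorname{Sign}\bigl[p_n^{**}(a;x_{n+1,k})\bigr]=(-1)^{n+1-k},\qquad k=1,\dots,n+1.
\]
By the intermediate value theorem, $p_n^{**}(a;x)$ therefore has at least one zero in each of the $n$ open intervals $(x_{n+1,k},x_{n+1,k+1})$, $k=1,\dots,n$. Since $p_n^{**}(a;x)$ has degree exactly $n$ and hence at most $n$ real zeros, each interval contains precisely one, yielding (\ref{EqT1}).

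The only place where one needs to be a little careful is ensuring that (\ref{Z2}) is usable with strict sign: the factor $(x_{n+1,k}-a)^{-2}$ in (\ref{p3}) is strictly positive (because $a\notin C_0(\Sigma)$ while $x_{n+1,k}\in C_0(\Sigma)$), and the coefficient $e_n-\gamma_{n+1}$ is strictly positive by (\ref{en-gamma}); combined with $p_n(x_{n+1,k})\neq 0$ (simplicity of zeros and interlacing), the signs are genuinely nonzero, so the IVT argument produces strict inequalities and no zero of $p_n^{**}(a;x)$ coincides with any $x_{n+1,k}$. I do not foresee a genuine obstacle in this proof: the analytic input has been absorbed into (\ref{Z2}), and what remains is a purely combinatorial counting of sign changes.
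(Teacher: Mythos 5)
Your proposal is correct and follows essentially the same route as the paper: the paper derives (\ref{Z2}) from (\ref{p3}) and (\ref{en-gamma}) and then states that Theorem \ref{T1} follows from (\ref{Z2}) together with item 2 of Proposition \ref{PropZeros}, which is precisely the sign-alternation and intermediate-value-theorem argument you have written out in detail. The only difference is that you make explicit the counting step (degree $n$ forces exactly one zero per interval) that the paper leaves implicit.
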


%
%

\subsection{Uvarov perturbation}

Let $\{p_{n}^{N}(a;x)\}_{n\geq 0}$ be the MOPS associated with the measure 
\begin{equation*}
d\mu _{N}=d\mu +N\delta _{a},
\end{equation*}%
with $N\in \mathbb{R}_{+}$, $\delta _{a}$ the Dirac delta function in $x=a$,
and $a\not\in C_{0}(\Sigma )$. R. \'{A}lvarez-Nodarse, F. Marcell\'{a}n and
J. Petronilho \cite[(8)]{WKB96} obtained the following representation for
such polynomials in terms of the MOPS $\{p_{n}(x)\}_{n\geq 0}$ (see also 
\cite{Uv69}.)%
\begin{equation}
p_{n}^{N}(a;x)=p_{n}(x)-\dfrac{Np_{n}(a)}{1+NK_{n-1}(a,a)}K_{n-1}(a,x).
\label{Con-For}
\end{equation}

Next, we give another connection formula for the Uvarov's orthogonal
polynomials $p_{n}^{N}(a;x)$ using the standard orthogonal polynomials $%
p_{n}(x)$ and the Iterated-Christoffel's orthogonal polynomials $p_{n}^{\ast
\ast }(a;x)$.

\setcounter{thm}{1}

\begin{thm}[Connection Formula]
The polynomials $\{\widehat{p}_{n}^{N}(a;x)\}_{n\geq 0}$, with $\widehat{p}%
_{n}^{N}(a;x)=k_{n}p_{n}^{N}(a;x)$, can be represented as 
\begin{equation}
\widehat{p}_{n}^{N}(a;x)=p_{n}(x)+NB_{n}(x-a)p_{n-1}^{\ast \ast }(a;x),
\label{EqT2}
\end{equation}%
with 
\begin{equation}
B_{n}=\frac{-p_{n}(a)}{\langle x-a,p_{n-1}^{\ast \ast }\rangle _{\mu }}%
=K_{n-1}\left( a,a\right) >0  \label{Bn}
\end{equation}%
and $k_{n}=1+NB_{n}$.
\end{thm}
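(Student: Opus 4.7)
The plan is to derive (\ref{EqT2}) from the known connection formula (\ref{Con-For}) by re-expressing the kernel $K_{n-1}(a,x)$ in terms of $p_{n-1}^{**}(a;x)$. The key intermediate identity I will establish is
$$(x-a)\,p_{n-1}^{**}(a;x) \;=\; p_n(x) + \langle x-a,\,p_{n-1}^{**}\rangle_\mu \, K_{n-1}(a,x).$$

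To prove this, I would expand the monic degree-$n$ polynomial $(x-a)p_{n-1}^{**}(a;x)$ in the $d\mu$-orthogonal basis $\{p_j(x)\}_{j=0}^{n}$. The Fourier coefficient of $p_j(x)$ for $0\le j\le n-1$ equals $\|p_j\|_\mu^{-2}\int p_j(x)(x-a)p_{n-1}^{**}(a;x)\,d\mu$. Using the decomposition $p_j(x) = p_j(a) + (x-a)s_j(x)$ with $\deg s_j \le j-1 \le n-2$, the summand containing $s_j$ becomes $\int s_j(x)\,p_{n-1}^{**}(a;x)\,d\mu^{**}$, which vanishes by the orthogonality defining $p_{n-1}^{**}$ with respect to $d\mu^{**}=(x-a)^2 d\mu$. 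What remains is $p_j(a)\,\langle x-a,p_{n-1}^{**}\rangle_\mu/\|p_j\|_\mu^2$, and summing over $j$ reproduces $\langle x-a,p_{n-1}^{**}\rangle_\mu\,K_{n-1}(a,x)$ via (\ref{Kernel}).

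Next, evaluating this identity at $x=a$ yields $\langle x-a,p_{n-1}^{**}\rangle_\mu = -p_n(a)/K_{n-1}(a,a)$, which immediately gives both expressions for $B_n$ in (\ref{Bn}) as well as $B_n>0$, since $K_{n-1}(a,a)>0$ from (\ref{Kernel}). Solving the intermediate identity for $K_{n-1}(a,x)$, substituting it into (\ref{Con-For}), and multiplying through by $k_n = 1+NK_{n-1}(a,a) = 1 + NB_n$, produces exactly (\ref{EqT2}).

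The only nontrivial step is the Fourier expansion giving the intermediate identity; the remaining steps are algebra. The essential trick is converting integrals against $d\mu$ into integrals against $d\mu^{**}$ via the factor $(x-a)^2$: for $\deg q < n-1$ one has $\int q(x)(x-a)^2 p_{n-1}^{**}(a;x)\,d\mu = \langle q, p_{n-1}^{**}\rangle_{\mu^{**}} = 0$ by construction of $p_{n-1}^{**}$, which kills all Fourier contributions except the one proportional to the reproducing kernel $K_{n-1}(a,x)$.
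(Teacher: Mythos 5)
Your proposal is correct, but it proves the theorem by a genuinely different route than the paper. The paper's proof verifies directly that the right-hand side of (\ref{EqT2}) is orthogonal to $1,(x-a),\dots,(x-a)^{n-1}$ with respect to $d\mu_N=d\mu+N\delta_a$: testing against $(x-a)^k$ turns the term $NB_n\langle (x-a)^k,(x-a)p_{n-1}^{\ast\ast}\rangle_\mu$ into $NB_n\langle (x-a)^{k-1},p_{n-1}^{\ast\ast}\rangle_{\mu^{\ast\ast}}$, which vanishes for $1\le k\le n-1$, while the case $k=0$ forces precisely the first expression for $B_n$ in (\ref{Bn}) so as to cancel the Dirac contribution $Np_n(a)$; the closed form $B_n=K_{n-1}(a,a)$ is then obtained separately by unwinding $p_{n-1}^{\ast\ast}$ through the two kernel representations (\ref{p2}) and (\ref{p1}). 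You instead take the already-quoted formula (\ref{Con-For}) as the starting point and convert it via the single identity
\begin{equation*}
(x-a)\,p_{n-1}^{\ast\ast}(a;x)=p_n(x)+\langle x-a,p_{n-1}^{\ast\ast}\rangle_\mu\,K_{n-1}(a,x),
\end{equation*}
which you establish by Fourier expansion in $\{p_j\}_{j=0}^{n}$, splitting $p_j(x)=p_j(a)+(x-a)s_j(x)$ and killing the $s_j$ contribution by $\mu^{\ast\ast}$-orthogonality. This is sound: evaluation at $x=a$ (legitimate since $p_n(a)\neq 0$ because $a\notin C_0(\Sigma)$) yields both equalities in (\ref{Bn}) at once, and the substitution into (\ref{Con-For}) followed by multiplication by $k_n=1+NK_{n-1}(a,a)$ reproduces (\ref{EqT2}) exactly. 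The trade-off is that your argument is not self-contained — it inherits the orthogonality of $p_n^N$ from (\ref{Con-For}) rather than proving it — but it is more economical, deriving the polynomial identity and the value of $B_n$ from one computation instead of two, and it makes transparent why $(x-a)p_{n-1}^{\ast\ast}(a;x)-p_n(x)$ must be a multiple of the reproducing kernel. Both proofs ultimately rest on the same mechanism: a factor $(x-a)^2$ converts $d\mu$-integrals into $d\mu^{\ast\ast}$-orthogonality relations.
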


\begin{proof}
In order to prove the orthogonality of the polynomials defined by (\ref{EqT2}%
), we deal with the basis $1,(x-a),(x-a)^{2},\ldots ,(x-a)^{n}$ of the
linear space of polynomials of degree at most $n$. Then, 
\begin{equation*}
\begin{array}{rcl}
\langle 1,\widehat{p}_{n}^{N}\rangle _{\mu _{_{N}}} & = & \langle
1,p_{n}\rangle _{\mu }+NB_{n}\langle 1,(x-a)p_{n-1}^{\ast \ast }\rangle
_{\mu }+Np_{n}(a)=0\vspace{0.3cm} \\ 
\langle (x-a),\widehat{p}_{n}^{N}\rangle _{\mu _{_{N}}} & = & \langle
(x-a),p_{n}\rangle _{\mu }+NB_{n}\langle 1,p_{n-1}^{\ast \ast }\rangle _{\mu
^{\ast \ast }}=0 \\ 
& \vdots &  \\ 
\langle (x-a)^{n-1},\widehat{p}_{n}^{N}\rangle _{\mu _{_{N}}} & = & \langle
(x-a)^{n-1},p_{n}\rangle _{\mu }+NB_{n}\langle (x-a)^{n-2},p_{n-1}^{\ast
\ast }\rangle _{\mu ^{\ast \ast }}=0,%
\end{array}%
\end{equation*}%
and, finally, 
\begin{equation*}
\begin{array}{rcl}
\langle (x-a)^{n},\widehat{p}_{n}^{N}\rangle _{\mu _{_{N}}} & = & \langle
(x-a)^{n},p_{n}\rangle _{\mu }+NB_{n}\langle (x-a)^{n-1},p_{n-1}^{\ast \ast
}\rangle _{\mu ^{\ast \ast }}>0\vspace{0.3cm} \\ 
& = & \Vert p_{n}\Vert _{\mu }^{2}+NB_{n}\Vert p_{n-1}^{\ast \ast }\Vert
_{\mu ^{\ast \ast }}^{2}>0.%
\end{array}%
\end{equation*}

In order to prove (\ref{Bn}), from (\ref{p2}) and (\ref{p1}) we get 
\begin{eqnarray*}
\langle x-a,p_{n-1}^{\ast \ast}\rangle _{\mu } &=&\int\left(
x-a\right)p_{n-1}^{\ast \ast}(a;x)d\mu(x)\vspace{0.3cm} \\
&=&\int\left( x-a\right)\frac{1}{x-a}\left[p_{n}^{\ast}(a;x)-\frac{%
p_{n}^{\ast}(a;a)}{p_{n-1}^{\ast}(a;a)}p_{n-1}^{\ast}(a;x)\right]d\mu(x)%
\vspace{0.3cm} \\
&=&\int p_{n}^{\ast}(a;x)d\mu(x)-\frac{p_{n}^{\ast}(a;a)}{p_{n-1}^{\ast}(a;a)%
}\int p_{n-1}^{\ast}(a;x)d\mu(x)\vspace{0.3cm} \\
&=&\displaystyle\frac{\|p_n\|_{\mu}^{2}}{p_{n}(a)}\int K_{n}(a;x)d\mu(x)%
\vspace{0.3cm} \\
& &-\frac{\|p_n\|_{\mu}^{2}}{p_{n}(a)}\frac{K_{n}(a;a)}{K_{n-1}(a;a)}\frac{%
p_{n-1}(a)}{\|p_{n-1}\|_{\mu}^{2}}\frac{\|p_{n-1}\|_{\mu}^{2}}{p_{n-1}(a)}%
\int K_{n-1}(a;x)d\mu(x)\vspace{0.3cm} \\
&=&\frac{\|p_n\|_{\mu}^{2}}{p_{n}(a)}-\frac{\|p_n\|_{\mu}^{2}}{p_{n}(a)}%
\frac{K_{n}(a;a)}{K_{n-1}(a;a)}\vspace{0.3cm} \\
&=&\frac{\|p_n\|_{\mu}^{2}}{p_{n}(a)}\left(1-\frac{K_{n}(a;a)}{K_{n-1}(a;a)}%
\right). \\
\end{eqnarray*}
Thus 
\begin{equation*}
B_n=\frac{-p_n(a)}{\langle x-a,p_{n-1}^{**}\rangle_{\mu}}= \frac{p_{n}^{2}(a)%
}{\|p_{n}\|_{\mu}^{2}\left[K_{n}(a,a)/K_{n-1}(a,a)-1\right]}=K_{n-1}(a,a)>0
\end{equation*}
\end{proof}

\section{The Zeros}

We call the attention of the reader on the fact that the constant $B_n$
defined as above does not depend on $N$. For this reason, the connection
formula (\ref{EqT2}) is very useful in order to obtain results about
monotonicity, asymptotics, and speed of convergence for the zeros of $%
p_{n}^N(a;x)$ in terms of the mass $N$. Indeed, let assume that $%
x_{n,k}^{N}:=x_{n,k}^{N}(a), k=1, 2, ...,n,$ are the zeros of $p_{n}^N(a;x)$%
. Thus, from (\ref{EqT1}), (\ref{EqT2}), and Lemma \ref{L1L}, we immediately
conclude that

\setcounter{thm}{2}

\begin{thm}
If $C_{0}(\Sigma )=[\xi ,\eta ]$ and $a\leq \xi, $ then 
\begin{equation}
a<x_{n,1}^{N}<x_{n,1}<x_{n-1,1}^{\ast \ast }<x_{n,2}^{N}<x_{n,2}<\cdots
<x_{n-1,n-1}^{\ast \ast }<x_{n,n}^{N}<x_{n,n}.  \label{EqT3-1}
\end{equation}%
Moreover, each $x_{n,k}^{N}$ is a decreasing function of $N$ and, for each $%
k=1,\ldots ,n-1$, 
\begin{equation}
\lim_{N\rightarrow \infty }x_{n,1}^{N}=a,\ \ \lim_{N\rightarrow \infty
}x_{n,k+1}^{N}=x_{n-1,k}^{\ast \ast },  \label{EqT3-2}
\end{equation}%
as well as 
\begin{equation}
\begin{array}{l}
\lim\limits_{N\rightarrow \infty }N[x_{n,1}^{N}-a]=\dfrac{-p_{n}(a)}{%
B_{n}p_{n-1}^{\ast \ast }(a;a)},\vspace{0.3cm} \\ 
\lim\limits_{N\rightarrow \infty }N[x_{n,k+1}^{N}-x_{n-1,k}^{\ast \ast }]=%
\dfrac{-p_{n}(x_{n-1,k}^{\ast \ast })}{B_{n}(x_{n-1,k}^{\ast \ast
}-a)[p_{n-1}^{\ast \ast }(a;x)]_{x=x_{n-1,k}^{\ast \ast }}^{\prime }}.%
\end{array}
\label{LimZeros-1}
\end{equation}
\end{thm}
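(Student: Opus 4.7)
The plan is to recognize that the connection formula (\ref{EqT2}) expresses $\widehat{p}_n^N(a;x)$ in exactly the form $h_n(x)+c\,g_n(x)$ to which Lemma \ref{L1L} applies, and then to read off all the conclusions from part (i) of that lemma. Since $\widehat{p}_n^N=k_np_n^N$ with $k_n=1+NB_n>0$, both polynomials share the same zeros, so it suffices to analyze the zeros of $\widehat{p}_n^N$.

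First I would identify the two building blocks. Take $h_n(x)=p_n(x)$, a monic polynomial of degree $n$ with zeros $x_{n,1}<\cdots<x_{n,n}$ inside $C_0(\Sigma)$. Take $g_n(x)=(x-a)p_{n-1}^{\ast\ast}(a;x)$, a monic polynomial of degree $n$ whose zeros are $a$ together with the zeros $x_{n-1,1}^{\ast\ast}<\cdots<x_{n-1,n-1}^{\ast\ast}$ of $p_{n-1}^{\ast\ast}(a;x)$. Both leading coefficients are $1>0$, and the constant multiplying $g_n$ in the connection formula is $c=NB_n$, which is strictly positive by (\ref{Bn}) (note that $B_n$ is independent of $N$).

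Next I would verify the interlacing hypothesis of Lemma \ref{L1L}(i). Applying Theorem \ref{T1} with the index shift $n\mapsto n-1$ yields $x_{n,1}<x_{n-1,1}^{\ast\ast}<x_{n,2}<\cdots<x_{n-1,n-1}^{\ast\ast}<x_{n,n}$. Since $a\leq\xi$ lies strictly to the left of $C_0(\Sigma)$ and hence of $x_{n,1}$, prepending $a$ to this chain and relabeling $\zeta_1=a$, $\zeta_{k+1}=x_{n-1,k}^{\ast\ast}$ gives the pattern $\zeta_1<x_{n,1}<\zeta_2<x_{n,2}<\cdots<\zeta_n<x_{n,n}$ required in part (i).

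Lemma \ref{L1L}(i) then delivers the three claims at once: the interlacing $\zeta_k<\eta_k<x_k$ for the zeros $\eta_k=x_{n,k}^N$ of $\widehat{p}_n^N$ translates verbatim into (\ref{EqT3-1}); the statement that each $\eta_k$ is decreasing in $c=NB_n$, together with $B_n>0$ fixed, gives monotone decrease in $N$; and the limit formula in (\ref{LimZeros-i}) yields (\ref{EqT3-2}) and (\ref{LimZeros-1}) once I compute $g_n'(\zeta_k)$. The only small calculation is this derivative: at $\zeta_1=a$, $g_n'(a)=p_{n-1}^{\ast\ast}(a;a)$, and at $\zeta_{k+1}=x_{n-1,k}^{\ast\ast}$ (where $p_{n-1}^{\ast\ast}$ vanishes), the product rule gives $g_n'(x_{n-1,k}^{\ast\ast})=(x_{n-1,k}^{\ast\ast}-a)\,[p_{n-1}^{\ast\ast}(a;x)]'_{x=x_{n-1,k}^{\ast\ast}}$, matching the denominators in (\ref{LimZeros-1}) after dividing the limit relation $\lim c(\eta_k-\zeta_k)=-h_n(\zeta_k)/g_n'(\zeta_k)$ by $B_n$.

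There is essentially no obstacle beyond careful bookkeeping. The one place to be watchful is that Lemma \ref{L1L} is phrased for a linear combination with coefficient $c$ rather than $N$, so all monotonicity and asymptotic statements must be converted by noting $c=NB_n$ and that $B_n$ is a positive constant independent of $N$; this is precisely what makes the factor $1/B_n$ appear in (\ref{LimZeros-1}).
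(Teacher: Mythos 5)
Your proposal is correct and is essentially the paper's own argument: the authors state that the theorem follows "immediately" from the interlacing (\ref{EqT1}), the connection formula (\ref{EqT2}), and Lemma \ref{L1L}(i), and your write-up simply supplies the bookkeeping (the identification $h_n=p_n$, $g_n=(x-a)p_{n-1}^{\ast\ast}(a;x)$, $c=NB_n$, and the product-rule evaluation of $g_n'$ at $a$ and at the $x_{n-1,k}^{\ast\ast}$) that they leave implicit.
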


\setcounter{thm}{3}

\begin{thm}
If $C_{0}(\Sigma )=[\xi ,\eta ]$ and $a\geq \eta, $ then 
\begin{equation}
x_{n,1}<x_{n,1}^{N}<x_{n-1,1}^{\ast \ast }<\cdots
<x_{n,n-1}<x_{n,n-1}^{N}<x_{n-1,n-1}^{\ast \ast }<x_{n,n}<x_{n,n}^{N}<a.
\label{EqT4-1}
\end{equation}%
Moreover, each $x_{n,k}^{N}$ is an increasing function of $N$ and, for each $%
k=1,\ldots ,n-1$, 
\begin{equation}
\lim_{N\rightarrow \infty }x_{n,n}^{N}=a,\ \ \lim_{N\rightarrow \infty
}x_{n,k}^{N}=x_{n-1,k}^{\ast \ast },  \label{EqT4-2}
\end{equation}%
and 
\begin{equation}
\begin{array}{l}
\lim\limits_{N\rightarrow \infty }N[a-x_{n,n}^{N}]=\dfrac{p_{n}(a)}{%
B_{n}p_{n-1}^{\ast \ast }(a;a)},\vspace{0.3cm} \\ 
\lim\limits_{N\rightarrow \infty }N[x_{n-1,k}^{\ast \ast }-x_{n,k}^{N}]=%
\dfrac{p_{n}(x_{n-1,k}^{\ast \ast })}{B_{n}(x_{n-1,k}^{\ast \ast
}-a)[p_{n-1}^{\ast \ast }(a;x)]_{x=x_{n-1,k}^{\ast \ast }}^{\prime }}.%
\end{array}
\label{LimZeros-2}
\end{equation}
\end{thm}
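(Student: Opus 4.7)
The plan is to mirror the (implicit) proof of Theorem 3 but with $a$ lying on the opposite side of the support. I will apply Lemma \ref{L1L} to the connection formula (\ref{EqT2}) with $h_n(x)=p_n(x)$ and $g_n(x)=(x-a)\,p_{n-1}^{\ast\ast}(a;x)$; since both are monic polynomials of degree $n$ and $B_n=K_{n-1}(a,a)>0$ by (\ref{Bn}), the scalar $c:=NB_n$ in $\widehat{p}_n^N(a;x)=h_n(x)+c\,g_n(x)$ is positive. Because $\widehat{p}_n^N=k_n p_n^N$ with $k_n>0$, the zeros of $\widehat{p}_n^N$ and $p_n^N$ coincide.

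The next step is to check that we are in case (ii) of Lemma \ref{L1L}. The zeros of $h_n$ are $x_{n,1}<\cdots<x_{n,n}$. The zeros of $g_n$, which I label $\zeta_1<\cdots<\zeta_n$, are $\zeta_k=x_{n-1,k}^{\ast\ast}$ for $k=1,\ldots,n-1$ together with $\zeta_n=a$. From Theorem \ref{T1} we have $x_{n,k}<x_{n-1,k}^{\ast\ast}<x_{n,k+1}$ for $k=1,\ldots,n-1$, and since $a\geq \eta\geq x_{n,n}$ we also have $x_{n,n}<\zeta_n=a$. Therefore $x_{n,k}<\zeta_k$ for every $k$ and $\zeta_k<x_{n,k+1}$ for $k\leq n-1$, which is precisely the hypothesis of Lemma \ref{L1L}(ii). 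Applying that lemma immediately produces the interlacing $x_{n,k}<x_{n,k}^N<\zeta_k$, which when combined with the interlacing of Theorem \ref{T1} yields (\ref{EqT4-1}); it also gives that each $x_{n,k}^N$ is an increasing function of $N$ and that $x_{n,k}^N\to\zeta_k$ as $N\to\infty$, i.e. (\ref{EqT4-2}).

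Finally, for the speed of convergence (\ref{LimZeros-2}), the lemma provides $\lim_{c\to\infty}c(\zeta_k-\eta_k)=h_n(\zeta_k)/g_n'(\zeta_k)$. I compute $g_n'$ case by case: at $\zeta_n=a$ one finds $g_n'(a)=p_{n-1}^{\ast\ast}(a;a)$ (the other term vanishes), and at $\zeta_k=x_{n-1,k}^{\ast\ast}$ with $k\leq n-1$ one finds $g_n'(\zeta_k)=(\zeta_k-a)\,[p_{n-1}^{\ast\ast}(a;x)]'_{x=\zeta_k}$ (the other term vanishes). Substituting $c=NB_n$ and rearranging yields the two limits in (\ref{LimZeros-2}).

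The only real obstacle is the bookkeeping: one must recognise that $a\geq \eta$ forces case (ii) of the lemma rather than case (i), and carry the sign of $(\zeta_k-a)$ correctly through the derivative computation at the interior zeros. Beyond this, the argument is a direct transcription of Lemma \ref{L1L}(ii) through the connection formula and the interlacing supplied by Theorem \ref{T1}.
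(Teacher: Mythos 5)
Your proposal is correct and is exactly the argument the paper intends: the authors state that Theorems 3 and 4 follow "immediately" from (\ref{EqT1}), (\ref{EqT2}), and Lemma \ref{L1L}, and your write-up simply supplies the details of that deduction (identifying case (ii) of the lemma, $c=NB_n$, and the derivative of $g_n(x)=(x-a)p_{n-1}^{\ast\ast}(a;x)$ at the two types of zeros). No gaps; the computation of $g_n'(\zeta_k)$ at $\zeta_n=a$ and at the interior zeros matches (\ref{LimZeros-2}) exactly.
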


Notice that the mass point $a$ attracts one zero of $p_{n}^{N}(a;x),$ i.e.
when $N\rightarrow \infty $, it captures either the smallest or the largest
zero, according to the location of the point $a$ with respect to the support
of the measure $\mu$.

\subsection{The Minimum Mass}

When either $a<\xi$ or $a>\eta, $ at most one of the zeros of $%
p_{n}^{N}(a;x) $ is located outside of $C_0(\Sigma)=[\xi,\eta]$. In the next
results, we will give explicitly the value $N_0$ of the mass such that for $%
N>N_0$ this situation occurs, i.e, one of the zeros is located outside $%
[\xi,\eta]$. \setcounter{cor}{0}

\begin{cor}
If $C_{0}(\Sigma )=[\xi ,\eta ]$ and $a< \xi ,$ then the smallest zero $%
x_{n,1}^{N}=x_{n,1}^{N}(a)$ satisfies 
\begin{equation*}
\begin{array}{c}
x_{n,1}^{N}>\xi, \ \ \mathrm{for} \ \ N<N_{0}, \vspace{0.4cm} \\ 
x_{n,1}^{N}=\xi, \ \ \mathrm{for} \ \ N=N_{0}, \vspace{0.4cm} \\ 
x_{n,1}^{N}<\xi, \ \ \mathrm{for} \ \ N>N_{0},%
\end{array}%
\end{equation*}
where 
\begin{equation*}
N_0=N_0(n,a,\xi)=\displaystyle\frac{-p_{n}(\xi)}{K_{n-1}\left( a,a\right)
(\xi-a)p_{n-1}^{\ast \ast }(a;\xi)}>0.
\end{equation*}
\end{cor}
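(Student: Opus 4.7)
My plan is to obtain $N_{0}$ by directly solving the equation obtained from the connection formula when we ask $\xi$ to be a zero of the perturbed polynomial. Since $\widehat{p}_{n}^{N}(a;x)=k_{n}p_{n}^{N}(a;x)$ with $k_{n}=1+NB_{n}>0$, the zero sets of $\widehat{p}_{n}^{N}(a;\cdot)$ and $p_{n}^{N}(a;\cdot)$ coincide. Evaluating (\ref{EqT2}) at $x=\xi$ and imposing $\widehat{p}_{n}^{N}(a;\xi)=0$ gives
\begin{equation*}
p_{n}(\xi)+NB_{n}(\xi-a)\,p_{n-1}^{**}(a;\xi)=0,
\end{equation*}
and solving for $N$ together with $B_{n}=K_{n-1}(a,a)$ (from (\ref{Bn})) yields exactly the formula displayed in the statement.

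Next I would verify $N_{0}>0$ by a sign check. Since the zeros of $p_{n}$ lie in the interior of $C_{0}(\Sigma)=[\xi,\eta]$, all factors $\xi-x_{n,k}$ are negative, so $\mathrm{Sign}[p_{n}(\xi)]=(-1)^{n}$. By Theorem \ref{T1} (applied with index $n-1$), the zeros of $p_{n-1}^{**}(a;x)$ interlace with those of $p_{n}(x)$ and hence lie strictly to the right of $\xi$; consequently $\mathrm{Sign}[p_{n-1}^{**}(a;\xi)]=(-1)^{n-1}$. Combining with $\xi-a>0$ and $K_{n-1}(a,a)>0$, numerator and denominator carry the same sign $(-1)^{n-1}$, so $N_{0}>0$.

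Finally I would invoke the monotonicity and asymptotic results already established. By Theorem 3, $N\mapsto x_{n,1}^{N}$ is a continuous, strictly decreasing function on $[0,\infty)$ with $x_{n,1}^{0}=x_{n,1}>\xi$ and $\lim_{N\to\infty}x_{n,1}^{N}=a<\xi$. Hence there is a unique $N_{0}>0$ with $x_{n,1}^{N_{0}}=\xi$, and this $N_{0}$ must coincide with the value computed above. Monotonicity then immediately delivers the three cases: $x_{n,1}^{N}>\xi$ for $N<N_{0}$, equality at $N=N_{0}$, and $x_{n,1}^{N}<\xi$ for $N>N_{0}$.

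There is no serious obstacle here: the only slightly delicate point is the sign bookkeeping that certifies $N_{0}>0$, and this reduces to counting signs of linear factors using the location of the zeros of $p_{n}$ and (via the interlacing in Theorem \ref{T1}) of $p_{n-1}^{**}(a;\cdot)$ relative to $\xi$.
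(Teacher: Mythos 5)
Your proposal is correct and follows essentially the same route as the paper: the paper's proof consists of the single observation that $p_{n}^{N}(a;\xi)=0$ if and only if $N=N_{0}$, obtained by evaluating the connection formula (\ref{EqT2}) at $x=\xi$, with the three cases then following from the monotonicity in Theorem 3. You merely make explicit the sign bookkeeping showing $N_{0}>0$ and the continuity/monotonicity argument that the paper leaves implicit.
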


\begin{proof}
In order to deduce the location of $x_{n,1}^{N}$ with respect to the point $%
x=\xi$, it is enough to observe that $p_{n}^{N}(\xi)=0$ if and only if $%
N=N_{0}$.
\end{proof}

\setcounter{cor}{1}

\begin{cor}
If $C_{0}(\Sigma )=[\xi ,\eta ]$ and $a>\eta,$ then the largest zero $%
x_{n,n}^{N}=x_{n,n}^{N}(a)$ satisfies 
\begin{equation*}
\begin{array}{c}
x_{n,n}^{N}<\eta, \ \ \mathrm{for} \ \ N<N_{0}, \vspace{0.4cm} \\ 
x_{n,n}^{N}=\eta, \ \ \mathrm{for} \ \ N=N_{0}, \vspace{0.4cm} \\ 
x_{n,n}^{N}>\eta, \ \ \mathrm{for} \ \ N>N_{0},%
\end{array}%
\end{equation*}
where 
\begin{equation*}
N_0=N_0(n,a,\eta)=\displaystyle\frac{-p_{n}(\eta)}{K_{n-1}\left( a,a\right)
(\eta-a)p_{n-1}^{\ast \ast }(a;\eta)}>0.
\end{equation*}
\end{cor}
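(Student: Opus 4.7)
The plan is to mirror, with signs appropriately reversed, the short argument used for Corollary 1. The governing identity is again the connection formula (\ref{EqT2}) from Theorem 2, evaluated at the endpoint under investigation; the trichotomy then follows from the monotonicity in $N$ already established in Theorem 4.

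First, I would substitute $x=\eta$ into $\widehat{p}_{n}^{N}(a;x)=p_{n}(x)+NK_{n-1}(a,a)(x-a)\,p_{n-1}^{\ast\ast}(a;x)$ and solve the equation $\widehat{p}_{n}^{N}(a;\eta)=0$ for $N$. Since $\widehat{p}_{n}^{N}(a;x)$ and $p_{n}^{N}(a;x)$ differ only by the nonzero factor $k_{n}=1+NK_{n-1}(a,a)>0$, we have $p_{n}^{N}(a;\eta)=0$ if and only if $\widehat{p}_{n}^{N}(a;\eta)=0$, and this isolates the claimed value of $N_{0}$.

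Second, I would justify $N_{0}>0$ by a sign check on each factor. By Proposition \ref{PropZeros}(1), the monic polynomial $p_{n}(x)$ has all its zeros strictly inside $C_{0}(\Sigma)=[\xi,\eta]$, so $p_{n}(\eta)>0$. Applied to the measure $d\mu^{\ast\ast}=(x-a)^{2}d\mu$, whose support is contained in $[\xi,\eta]$, the same proposition yields $p_{n-1}^{\ast\ast}(a;\eta)>0$. Also $K_{n-1}(a,a)>0$ by definition, and $\eta-a<0$ because $a>\eta$. Hence the numerator $-p_{n}(\eta)$ and the denominator $K_{n-1}(a,a)(\eta-a)p_{n-1}^{\ast\ast}(a;\eta)$ are both negative, so $N_{0}>0$.

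Third, to close the trichotomy, I would invoke Theorem 4: $x_{n,n}^{N}$ is a strictly increasing function of $N$, with $x_{n,n}^{N}<a$ for all $N$ and $x_{n,n}^{N}\to a$ as $N\to\infty$. Since $x_{n,n}^{N_{0}}=\eta$ by construction of $N_{0}$, strict monotonicity yields $x_{n,n}^{N}<\eta$ for $N<N_{0}$ and $x_{n,n}^{N}>\eta$ for $N>N_{0}$. There is no real obstacle: the whole argument is two lines of substitution plus one sign check, the nontrivial input being the monotonicity provided by Theorem 4 and the positivity of $p_{n}(\eta)$ and $p_{n-1}^{\ast\ast}(a;\eta)$, which are immediate from Proposition \ref{PropZeros}(1) once one notes that $C_{0}(\mathrm{supp}\,d\mu^{\ast\ast})\subseteq[\xi,\eta]$.
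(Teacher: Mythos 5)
Your proposal is correct and follows essentially the same route as the paper, whose proof is just the one-line observation that $p_{n}^{N}(a;\eta)=0$ if and only if $N=N_{0}$; you have merely filled in the implicit details (solving the connection formula at $x=\eta$ for $N$, the sign check giving $N_{0}>0$, and the appeal to the monotonicity and interlacing of Theorem 4 to convert the single equality $x_{n,n}^{N_{0}}=\eta$ into the stated trichotomy).
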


\begin{proof}
In order to find the location of $x_{n,n}^{N}$ with respect to the point $%
x=\eta$, notice that $p_{n}^{N}(\eta)=0$ if and only if $N=N_{0}$.
\end{proof}

\section{Application to classical measures}

\subsection{Jacobi type (Jacobi-Koornwinder) orthogonal polynomials}

First, we will consider $p_{n}(x)=P_{n}^{\alpha,\beta}(x)$, the classical
monic Jacobi polynomials, which are orthogo\-nal with respect to the measure 
$d\mu _{\alpha ,\beta }=(1-x)^{\alpha }(1+x)^{\beta }dx$, $\alpha ,\beta >-1$%
, supported on $[-1,1]$. We consider the Uvarov perturbations on $\mu
_{\alpha ,\beta }$ with either $a=-1$ or $a=1,$ and $M,N \geq0.$ 
\begin{equation}
d\mu _{M}=d\mu _{\alpha ,\beta }+M\delta _{-1},  \label{Jacobi-type1}
\end{equation}%
\begin{equation}
d\mu _{N}=d\mu _{\alpha ,\beta }+N\delta _{1}.  \label{Jacobi-type2}
\end{equation}%
Such orthogonal polynomials were first studied by T. H. Koornwinder (see 
\cite{Koornwinder84}), in 1984. There, he adds simultaneously two Dirac
delta functions at the end points $x=-1$ and $x=1$, that is, 
\begin{equation*}
d\mu _{M,N}=d\mu _{\alpha ,\beta }+M\delta _{-1}+N\delta _{1}.
\end{equation*}%
Let $\{P_{n}^{\alpha,\beta,M}(x)\}_{n\geq0}$ and $\{P_{n}^{\alpha
,\beta,N}(x)\}_{n\geq0}$ denote the sequences of orthogonal polynomials with
respect (\ref{Jacobi-type1}) and (\ref{Jacobi-type2}), with the
normalization pointed out in Theorem 2, respectively. Then, the connection
formulas are 
\begin{equation*}
P_{n}^{\alpha ,\beta ,M}(x)=P_{n}^{\alpha ,\beta
}(x)+MK_{n-1}(-1,-1)(x+1)P_{n-1}^{\alpha ,\beta +2}(x)
\end{equation*}%
and 
\begin{equation}
P_{n}^{\alpha ,\beta ,N}(x)=P_{n}^{\alpha ,\beta
}(x)+NK_{n-1}(1,1)(x-1)P_{n-1}^{\alpha +2,\beta}(x).  \label{Jac-conexion}
\end{equation}%
It is straightforward to see that 
\begin{equation*}
K_{n-1}(-1,-1)=\frac{1}{2^{\alpha +\beta +1}}\frac{\Gamma (n+\beta +1)\Gamma
(n+\alpha +\beta +1)}{\Gamma (n)\Gamma (\beta +1)\Gamma (\beta +2)\Gamma
(n+\alpha )}
\end{equation*}%
and 
\begin{equation*}
K_{n-1}(1,1)=\frac{1}{2^{\alpha +\beta +1}}\frac{\Gamma (n+\alpha +1)\Gamma
(n+\alpha +\beta +1)}{\Gamma (n)\Gamma (\alpha +1)\Gamma (\alpha +2)\Gamma
(n+\beta )}.
\end{equation*}

Recently, several authors (\cite{WKB96}, \cite{DimitrovMelloRafaeli}, \cite%
{DuenasMarcellan08}) have been contributed to the analysis of the behavior
of the zeros of $P_{n}^{\alpha,\beta,M}(x),$ and $P_{n}^{\alpha,\beta,N}(x)$.

Let denote by $(x_{n,k}^{M}(\alpha ))$, $(x_{n,k}^{N}(\alpha )),$ and $%
(x_{n,k}(\alpha ))$ the zeros of $P_{n}^{\alpha,\beta,M}(x)$, $%
P_{n}^{\alpha,\beta,N}(x),$ and $P_{n}^{\alpha ,\beta}(x)$, respectively,
all arranged in an increasing order. Then, applying the results of Section
3, we obtain

\setcounter{thm}{4}

\begin{thm}
\label{T.6J} The inequalities 
\begin{equation*}
\begin{array}{r}
-1<x_{n,1}^{M}(\alpha ,\beta )<x_{n,1}(\alpha,\beta)<x_{n-1,1}(\alpha ,\beta
+2)<x_{n,2}^{N}(\alpha ,\beta )<x_{n,2}(\alpha ,\beta )<\cdots \vspace{0.3cm}
\\ 
<x_{n-1,n-1}(\alpha ,\beta +2)<x_{n,n}^{N}(\alpha ,\beta )<x_{n,n}(\alpha
,\beta )%
\end{array}%
\end{equation*}%
hold for every $\alpha ,\beta >-1$. Moreover, each $x_{n,k}^{M}(\alpha
,\beta )$ is a decreasing function of $M$ and, for each $k=1,\ldots ,n-1$, 
\begin{equation}
\lim_{M\rightarrow \infty }x_{n,1}^{M}(\alpha ,\beta )=-1,\ \
\lim_{M\rightarrow \infty }x_{n,k+1}^{M}(\alpha ,\beta )=x_{n-1,k}(\alpha
,\beta +2),  \label{EqT5-2}
\end{equation}%
and 
\begin{equation*}
\begin{array}{l}
\lim\limits_{M\rightarrow \infty }M[x_{n,1}^{M}(\alpha ,\beta
)+1]=h_{n}(\alpha ,\beta ),\vspace{0.3cm} \\ 
\lim\limits_{M\rightarrow \infty }M[x_{n,k+1}^{M}(\alpha ,\beta
)-x_{n-1,k}(\alpha ,\beta +2)]=\displaystyle\frac{\left[ 1-x_{n-1,k}(\alpha
,\beta +2)\right] h_{n}(\alpha ,\beta )}{2(\beta +2)},%
\end{array}%
\end{equation*}%
where 
\begin{equation*}
h_{n}(\alpha ,\beta )=\displaystyle\frac{2^{\alpha +\beta +2}\Gamma
(n)\Gamma (\beta +2)\Gamma (\beta +3)\Gamma (n+\alpha )}{\Gamma (n+\beta
+2)\Gamma (n+\alpha +\beta +2)}.
\end{equation*}
\end{thm}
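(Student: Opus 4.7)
The plan is a direct specialization of Theorem 3 with $\mu = \mu_{\alpha,\beta}$ (classical Jacobi weight) and mass point $a=-1$. Since $C_0(\Sigma)=[-1,1]$ and $a=\xi=-1$, the hypothesis $a\le\xi$ is satisfied, so the conclusions (\ref{EqT3-1})--(\ref{LimZeros-1}) all apply. What must be done is (a) identify the iterated-Christoffel auxiliary MOPS and the constant $B_n$ in closed form, and (b) simplify the abstract right-hand sides of the limits into the quantities $h_n(\alpha,\beta)$ and $[1-x_{n-1,k}(\alpha,\beta+2)]h_n(\alpha,\beta)/[2(\beta+2)]$ stated in the theorem.

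For (a), the identification is immediate: $d\mu^{**}=(x+1)^2\,d\mu_{\alpha,\beta}=(1-x)^\alpha(1+x)^{\beta+2}\,dx$ is again a classical Jacobi weight, so $p_{n-1}^{**}(-1;x)=P_{n-1}^{\alpha,\beta+2}(x)$. Substituting into (\ref{EqT3-1}) yields the interlacing chain of the theorem (the zeros of $P_{n-1}^{\alpha,\beta+2}$ play the role of the $x_{n-1,k}^{**}$), and substituting into (\ref{EqT3-2}) yields (\ref{EqT5-2}). The monotonicity of each $x_{n,k}^M(\alpha,\beta)$ in $M$ is inherited verbatim from Theorem 3. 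From (\ref{Bn}), $B_n=K_{n-1}(-1,-1)$, whose closed Gamma-form is already recorded above the theorem statement.

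For (b), I would first evaluate the first limit using the classical closed value
\[
P_n^{\alpha,\beta}(-1)=(-1)^n\,\frac{2^n\,n!\,\Gamma(n+\beta+1)\,\Gamma(n+\alpha+\beta+1)}{\Gamma(\beta+1)\,\Gamma(2n+\alpha+\beta+1)}
\]
and the analogous formula for $P_{n-1}^{\alpha,\beta+2}(-1)$, substituting into $-P_n^{\alpha,\beta}(-1)/[B_n P_{n-1}^{\alpha,\beta+2}(-1)]$ from (\ref{LimZeros-1}). Routine Gamma-function telescoping collapses the ratio to exactly $h_n(\alpha,\beta)$. For the second limit, evaluated at $x_{n-1,k}:=x_{n-1,k}(\alpha,\beta+2)$, I would use the Christoffel identity (\ref{p1}) specialized at $a=-1$, which reads $(x+1)P_{n-1}^{\alpha,\beta+2}(x)=\|P_{n-1}^{\alpha,\beta}\|^{-2}_{\mu_{\alpha,\beta}}\bigl[\text{something}\bigr]$ expressing $P_{n-1}^{\alpha,\beta+2}$ as a monic kernel polynomial for $\mu_{\alpha,\beta}$. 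Differentiating this identity and evaluating at $x_{n-1,k}$ expresses $(x_{n-1,k}+1)[P_{n-1}^{\alpha,\beta+2}]'(x_{n-1,k})$ in terms of $P_n^{\alpha,\beta}(x_{n-1,k})$ and $P_{n-1}^{\alpha,\beta}(x_{n-1,k})$; combined with the standard Jacobi norm ratio $\|P_{n-1}^{\alpha,\beta+2}\|^2/\|P_{n-1}^{\alpha,\beta}\|^2$, everything cancels except the factor $(1-x_{n-1,k})/[2(\beta+2)]$ multiplying the previously computed $h_n(\alpha,\beta)$.

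The main obstacle is this last simplification: extracting the clean prefactor $[1-x_{n-1,k}(\alpha,\beta+2)]/[2(\beta+2)]$ requires a careful manipulation identifying $P_{n-1}^{\alpha,\beta+2}$ as the Christoffel kernel polynomial at $-1$ and then differentiating. The Gamma-function bookkeeping behind $h_n(\alpha,\beta)$ itself is tedious but entirely routine, and the interlacing/monotonicity parts reduce to mechanical substitution into Theorem 3.
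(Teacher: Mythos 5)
Your strategy coincides with the paper's: the interlacing, monotonicity and the limits (\ref{EqT5-2}) are indeed just Theorem 3 with $a=-1$ and the identification $p_{n-1}^{\ast\ast}(-1;x)=P_{n-1}^{\alpha,\beta+2}(x)$, $B_n=K_{n-1}(-1,-1)$, and the first limit is the same Gamma-function computation. Two concrete points, though. First, the value you quote for the monic Jacobi polynomial at $-1$ carries a spurious factor $n!$; the correct monic normalization is
\begin{equation*}
P_{n}^{\alpha ,\beta }(-1)=\frac{(-1)^{n}2^{n}\Gamma (n+\beta +1)\Gamma (n+\alpha +\beta +1)}{\Gamma (\beta +1)\Gamma (2n+\alpha +\beta +1)}.
\end{equation*}
Used consistently, your formula puts an extra $n!/(n-1)!=n$ into the ratio $-P_{n}^{\alpha,\beta}(-1)/\bigl[K_{n-1}(-1,-1)P_{n-1}^{\alpha,\beta+2}(-1)\bigr]$, so it would yield $n\,h_{n}(\alpha,\beta)$ rather than $h_{n}(\alpha,\beta)$; with the corrected value the telescoping does give $h_n(\alpha,\beta)$. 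Second, the decisive step for the second limit is left schematic in a way that hides the real work: differentiating the kernel representation $(1+x)P_{n-1}^{\alpha,\beta+2}(x)=P_{n}^{\alpha,\beta}(x)-\lambda_{n-1}P_{n-1}^{\alpha,\beta}(x)$ and evaluating at a zero $x_{n-1,k}$ of $P_{n-1}^{\alpha,\beta+2}$ expresses $(1+x_{n-1,k})\,[P_{n-1}^{\alpha,\beta+2}]'(x_{n-1,k})$ in terms of the \emph{derivatives} of $P_{n}^{\alpha,\beta}$ and $P_{n-1}^{\alpha,\beta}$, not in terms of $P_{n}^{\alpha,\beta}(x_{n-1,k})$ itself, and the norm ratio alone will not close the loop. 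To land on the prefactor $\bigl[1-x_{n-1,k}(\alpha,\beta+2)\bigr]/\bigl[2(\beta+2)\bigr]$ you additionally need the first-order structure relation and the second-order Jacobi differential equation; this is exactly what the paper does, starting from the equivalent identity $n(n+\alpha)(1+x)P_{n-1}^{\alpha,\beta+2}(x)=n(n+\alpha+\beta+1)P_{n}^{\alpha,\beta}(x)+(\beta+1)(1-x)[P_{n}^{\alpha,\beta}(x)]'$, differentiating it, and eliminating the second derivative via the ODE. So the route is the right one, but as written it has a wrong constant and the hardest cancellation is asserted rather than carried out.
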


\begin{proof}
It remains to show the limits above. From (\ref{LimZeros-1}) 
\begin{equation*}
\lim\limits_{M\rightarrow \infty }M[x_{n,1}^{M}(\alpha ,\beta)+1] =\frac{%
-P_{n}^{\alpha,\beta}(-1)}{K_{n-1}(-1,-1)P_{n-1}^{\alpha,\beta+2}(-1)}
\end{equation*}
Since 
\begin{equation*}
P_{n}^{\alpha ,\beta }(-1)=\displaystyle\frac{(-1)^{n}2^{n}\Gamma (n+\beta
+1)\Gamma (n+\alpha +\beta +1)}{\Gamma (\beta +1)\Gamma (2n+\alpha +\beta +1)%
}.
\end{equation*}
and 
\begin{equation*}
K_{n-1}(-1,-1)=\frac{1}{2^{\alpha +\beta +1}}\frac{\Gamma (n+\beta +1)\Gamma
(n+\alpha +\beta +1)}{\Gamma (n)\Gamma (\beta +1)\Gamma (\beta
+2)\Gamma(n+\alpha )}
\end{equation*}
we obtain 
\begin{equation*}
\frac{-P_{n}^{\alpha,\beta}(-1)}{K_{n-1}(-1,-1)P_{n-1}^{\alpha,\beta+2}(-1)}%
= \displaystyle\frac{2^{\alpha +\beta +2}\Gamma (n)\Gamma (\beta +2)\Gamma
(\beta +3)\Gamma (n+\alpha )}{\Gamma (n+\beta +2)\Gamma (n+\alpha +\beta +2)}%
=h_{n}(\alpha ,\beta ).
\end{equation*}

Also from (\ref{LimZeros-1}) 
\begin{equation*}
\begin{array}{l}
\lim\limits_{M\rightarrow \infty }M[x_{n,k+1}^{M}(\alpha
,\beta)-x_{n-1,k}(\alpha ,\beta +2)]\vspace{0.3cm} \\ 
\hspace{0.5cm}\displaystyle=\frac{-P_{n}^{\alpha,\beta}(x_{n-1,k}(\alpha,%
\beta+2))}{K_{n-1}(-1,-1)(x_{n-1,k}(\alpha,\beta+2)+1)[P_{n-1}^{\alpha,%
\beta+2}(x)]^{\prime }\big|_{x=x_{n-1,k}(\alpha,\beta+2)}}.%
\end{array}%
\end{equation*}
On the other hand, it follows from 
\begin{equation*}
n(n+\alpha)(1+x)P_{n-1}^{\alpha,\beta+2}(x)=n(n+\alpha+\beta+1)P_{n}^{%
\alpha,\beta}(x)+(\beta+1)(1-x)[P_{n}^{\alpha,\beta}(x)]^{\prime }
\end{equation*}
that 
\begin{equation*}
\begin{array}{l}
n(n+\alpha+\beta+1)P_{n}^{\alpha,\beta}(x_{n-1,k}(\alpha,\beta+2))\vspace{%
0.3cm} \\ 
\hspace{0.5cm}=-(\beta+1)(1-x_{n-1,k}(\alpha,\beta+2))[P_{n}^{\alpha,%
\beta}(x)]^{\prime }\big|_{x=x_{n-1,k}(\alpha,\beta+2)}%
\end{array}%
\end{equation*}
and 
\begin{equation*}
\begin{array}{l}
n(n+\alpha)(1+x_{n-1,k}(\alpha,\beta+2))[P_{n}^{\alpha,\beta}(x)]^{\prime }%
\big|_{x=x_{n-1,k}(\alpha,\beta+2)}\vspace{0.3cm} \\ 
\hspace{0.5cm}=\displaystyle \lbrack
n(n+\alpha+\beta+1)-(\beta+1)][P_{n}^{\alpha,\beta}(x)]^{\prime }\big|%
_{x=x_{n-1,k}(\alpha,\beta+2)}\vspace{0.3cm} \\ 
\hspace{0.8cm}+\displaystyle(\beta+1)(1-x_{n-1,k}(\alpha,\beta+2))[P_{n}^{%
\alpha,\beta}(x)]^{\prime \prime }\big|_{x=x_{n-1,k}(\alpha,\beta+2)}.%
\end{array}%
\end{equation*}
Now, using the last two equalities and the differential equation for the
Jacobi polynomials 
\begin{equation*}
(1-x^2)[P_{n}^{\alpha,\beta}(x)]^{\prime \prime
}+[\beta-\alpha-(\alpha+\beta+1)x][P_{n}^{\alpha,\beta}(x)]^{\prime
}+n(n+\alpha+\beta+1)P_{n}^{\alpha,\beta}(x)=0
\end{equation*}
we obtain 
\begin{equation*}
\begin{array}{l}
(1+x_{n-1,k}(\alpha,\beta+2))[P_{n}^{\alpha,\beta+2}(x)]^{\prime }\big|%
_{x=x_{n-1,k}(\alpha,\beta+2)}\vspace{0.3cm} \\ 
\hspace{0.5cm}=\displaystyle\frac{-(n+\beta+1)(n+\alpha+\beta+1)}{%
(\beta+1)(1-x_{n-1,k}(\alpha,\beta+2))}P_{n}^{\alpha,\beta}(x_{n-1,k}(%
\alpha,\beta+2)).%
\end{array}%
\end{equation*}
Therefore 
\begin{equation*}
\begin{array}{l}
\lim\limits_{M\rightarrow \infty }M[x_{n,k+1}^{M}(\alpha
,\beta)-x_{n-1,k}(\alpha ,\beta +2)]\vspace{0.3cm} \\ 
\hspace{0.5cm}\displaystyle=\frac{-P_{n}^{\alpha,\beta}(x_{n-1,k}(\alpha,%
\beta+2))}{K_{n-1}(-1,-1)(x_{n-1,k}(\alpha,\beta+2)+1)[P_{n-1}^{\alpha,%
\beta+2}(x)]^{\prime }\big|_{x=x_{n-1,k}(\alpha,\beta+2)}}\vspace{0.3cm} \\ 
\hspace{0.5cm}\displaystyle=\frac{\left[ 1-x_{n-1,k}(\alpha,\beta +2)\right]
h_{n}(\alpha ,\beta )}{2(\beta +2)}.%
\end{array}%
\end{equation*}
\end{proof}

\setcounter{thm}{5}

\begin{thm}
\label{T.Jac} The inequalities 
\begin{equation*}
\begin{array}{l}
x_{n,1}(\alpha ,\beta )<x_{n,1}^{N}(\alpha ,\beta )<x_{n-1,1}(\alpha
+2,\beta )<\cdots <\vspace{0.3cm} \\ 
x_{n,n-1}(\alpha ,\beta )<x_{n,n-1}^{N}(\alpha ,\beta )<x_{n-1,n-1}(\alpha
+2,\beta )<x_{n,n}(\alpha ,\beta )<x_{n,n}^{N}(\alpha ,\beta )<1%
\end{array}%
\end{equation*}%
hold for every $\alpha ,\beta >-1$. Moreover, each $x_{n,k}^{N}(\alpha
,\beta )$ is an increasing function of $N$ and, for each $k=1,\ldots ,n-1$, 
\begin{equation*}
\lim_{N\rightarrow \infty }x_{n,n}^{N}(\alpha ,\beta )=1,\ \
\lim_{N\rightarrow \infty }x_{n,k}^{N}(\alpha ,\beta )=x_{n-1,k}(\alpha
+2,\beta ),
\end{equation*}%
and 
\begin{equation*}
\begin{array}{l}
\lim\limits_{N\rightarrow \infty }N[1-x_{n,n}^{N}(\alpha ,\beta
)]=g_{n}(\alpha ,\beta ),\vspace{0.3cm} \\ 
\lim\limits_{N\rightarrow \infty }N[x_{n-1,k}(\alpha +2,\beta
)-x_{n,k}^{N}(\alpha ,\beta )]=\displaystyle\frac{\left[ 1+x_{n-1,k}(\alpha
+2,\beta )\right] g_{n}(\alpha ,\beta )}{2(\alpha +2)},%
\end{array}%
\end{equation*}%
where 
\begin{equation*}
g_{n}(\alpha ,\beta )=\displaystyle\frac{2^{\alpha +\beta +2}\Gamma
(n)\Gamma (\alpha +2)\Gamma (\alpha +3)\Gamma (n+\beta )}{\Gamma (n+\alpha
+2)\Gamma (n+\alpha +\beta +2)}.
\end{equation*}
\end{thm}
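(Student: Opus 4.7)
The plan is to specialize Theorem 4 directly to the Jacobi measure $d\mu_{\alpha,\beta}=(1-x)^\alpha(1+x)^\beta dx$ with mass point $a=1=\eta$. In this setting the iterated-Christoffel perturbation $(x-1)^2 d\mu_{\alpha,\beta}=(1-x)^{\alpha+2}(1+x)^\beta dx$ is again of Jacobi type, so
$$p_{n-1}^{**}(1;x)=P_{n-1}^{\alpha+2,\beta}(x).$$
Substituting this identification into the connection formula of Theorem 2 recovers (\ref{Jac-conexion}), and the interlacing chain together with the monotonicity of each $x_{n,k}^N(\alpha,\beta)$ in $N$ and the first-order limits then follow at once from (\ref{EqT4-1}) and (\ref{EqT4-2}).

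For the explicit speed-of-convergence $g_n(\alpha,\beta)$, I plug the Jacobi data into (\ref{LimZeros-2}). One has the classical evaluation
$$P_n^{\alpha,\beta}(1)=\frac{2^n\,\Gamma(n+\alpha+1)\,\Gamma(n+\alpha+\beta+1)}{\Gamma(\alpha+1)\,\Gamma(2n+\alpha+\beta+1)},$$
together with the closed form of $K_{n-1}(1,1)$ already stated in the paper and the analogous evaluation of $P_{n-1}^{\alpha+2,\beta}(1)$. Routine cancellation of Gamma factors turns
$$\lim_{N\to\infty}N[1-x_{n,n}^N(\alpha,\beta)]=\frac{P_n^{\alpha,\beta}(1)}{K_{n-1}(1,1)\,P_{n-1}^{\alpha+2,\beta}(1)}$$
into exactly $g_n(\alpha,\beta)$.

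For the second limit, I mirror the computation given in the proof of Theorem 5, but now in the $\alpha$-parameter direction. The key structural identity I need is
$$n(n+\beta)(1-x)P_{n-1}^{\alpha+2,\beta}(x)=n(n+\alpha+\beta+1)P_n^{\alpha,\beta}(x)-(\alpha+1)(1+x)[P_n^{\alpha,\beta}(x)]'.$$
Evaluating at $x=x_{n-1,k}(\alpha+2,\beta)$ expresses $P_n^{\alpha,\beta}$ in terms of $(1+x)[P_n^{\alpha,\beta}]'$; differentiating once more, evaluating again at the same zero, and using the Jacobi differential equation
$$(1-x^2)[P_n^{\alpha,\beta}(x)]''+[\beta-\alpha-(\alpha+\beta+1)x][P_n^{\alpha,\beta}(x)]'+n(n+\alpha+\beta+1)P_n^{\alpha,\beta}(x)=0$$
yields
$$(1-x_{n-1,k}(\alpha+2,\beta))[P_{n-1}^{\alpha+2,\beta}(x)]'\big|_{x_{n-1,k}(\alpha+2,\beta)}=\frac{-(n+\alpha+1)(n+\alpha+\beta+1)}{(\alpha+1)(1+x_{n-1,k}(\alpha+2,\beta))}P_n^{\alpha,\beta}(x_{n-1,k}(\alpha+2,\beta)).$$
Inserting this into (\ref{LimZeros-2}) produces precisely $[1+x_{n-1,k}(\alpha+2,\beta)]g_n(\alpha,\beta)/[2(\alpha+2)]$.

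The main obstacle is locating (and correctly signing) the companion identity linking $P_{n-1}^{\alpha+2,\beta}$ to $P_n^{\alpha,\beta}$ and its derivative, since the proof of Theorem 5 uses the $\beta\mapsto\beta+2$ version. The cleanest certification is the Jacobi symmetry $P_n^{\alpha,\beta}(x)=(-1)^n P_n^{\beta,\alpha}(-x)$: under $(\alpha,\beta,x,M)\mapsto(\beta,\alpha,-x,N)$ the measure $d\mu_M$ of (\ref{Jacobi-type1}) becomes $d\mu_N$ of (\ref{Jacobi-type2}), the interlacing in Theorem 5 maps to that in Theorem 6, and $h_n(\beta,\alpha)=g_n(\alpha,\beta)$, so the entire theorem can be deduced from Theorem 5 by this change of variables without redoing the algebra.
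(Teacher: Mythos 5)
Your proof follows essentially the same route as the paper's: the paper likewise reduces Theorem \ref{T.Jac} to the argument of Theorem \ref{T.6J}, citing only the companion identity linking $(x-1)P_{n-1}^{\alpha+2,\beta}(x)$ to $P_{n}^{\alpha,\beta}(x)$ and $(1+x)[P_{n}^{\alpha,\beta}(x)]'$, and the identification $p_{n-1}^{\ast\ast}(1;x)=P_{n-1}^{\alpha+2,\beta}(x)$ plus the Gamma-factor cancellations you describe are exactly what is needed. Two remarks. First, the structural identity as you wrote it has the wrong sign: comparing leading coefficients (all polynomials here are monic) forces
$n(n+\beta)(x-1)P_{n-1}^{\alpha+2,\beta}(x)=n(n+\alpha+\beta+1)P_{n}^{\alpha,\beta}(x)-(\alpha+1)(1+x)[P_{n}^{\alpha,\beta}(x)]'$,
whereas you wrote $(1-x)$ on the left. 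Since the differentiated identity, evaluated at a zero $z$ of $P_{n-1}^{\alpha+2,\beta}$, produces the factor $(1-z)[P_{n-1}^{\alpha+2,\beta}]'(z)$ with the sign of that prefactor, your version would flip the sign of the limit. Your stated intermediate formula for $(1-z)[P_{n-1}^{\alpha+2,\beta}]'(z)$ and the final value $[1+x_{n-1,k}(\alpha+2,\beta)]g_{n}(\alpha,\beta)/(2(\alpha+2))$ are nonetheless correct, so this reads as a transcription slip rather than a gap. (For what it is worth, the paper's own displayed identity also carries a typo: the prefactor $(n+\beta)/(2n)$ should be $(n+\beta)/(n+\alpha+\beta+1)$.) Second, your closing observation is the genuinely different part: the reflection $P_{n}^{\alpha,\beta}(x)=(-1)^{n}P_{n}^{\beta,\alpha}(-x)$ carries $d\mu_{\alpha,\beta}+N\delta_{1}$ to $d\mu_{\beta,\alpha}+N\delta_{-1}$, sends the zeros $x_{n,k}^{N}(\alpha,\beta)$ to $-x_{n,n+1-k}^{M}(\beta,\alpha)$ with $M=N$, and gives $g_{n}(\alpha,\beta)=h_{n}(\beta,\alpha)$, so the entire theorem is the image of Theorem \ref{T.6J} under this involution. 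That route avoids redoing the derivative and differential-equation manipulations altogether and gets every sign right automatically; it is cleaner than what the paper does.
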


\begin{proof}
The prove follows in the same way as the Theorem \ref{T.6J}. We only observe
that 
\begin{equation*}
\displaystyle\frac{n+\beta }{2n}(x-1)P_{n-1}^{\alpha +2,\beta
}(x)=P_{n}^{\alpha ,\beta }(x)-\frac{\alpha +1}{n(n+\alpha +\beta +1)}%
(1+x)[P_{n}^{\alpha ,\beta }(x)]^{\prime }.
\end{equation*}
\end{proof}

In order to illustrate the results of Theorem \ref{T.Jac}, we enclose the
graphs of $P_{3}^{\alpha ,\beta ,N+\varepsilon}(x)$, for $\alpha =\beta =0 $
and some values of $\varepsilon >0$, in order to show the monotonicity of
the zeros of $P_{3}^{\alpha,\beta,N}(x)$ as a function of the mass $N$ ( see
Figure \ref{FigJac}). 
\begin{figure}[h]
\centerline{\includegraphics[width=11cm,keepaspectratio]{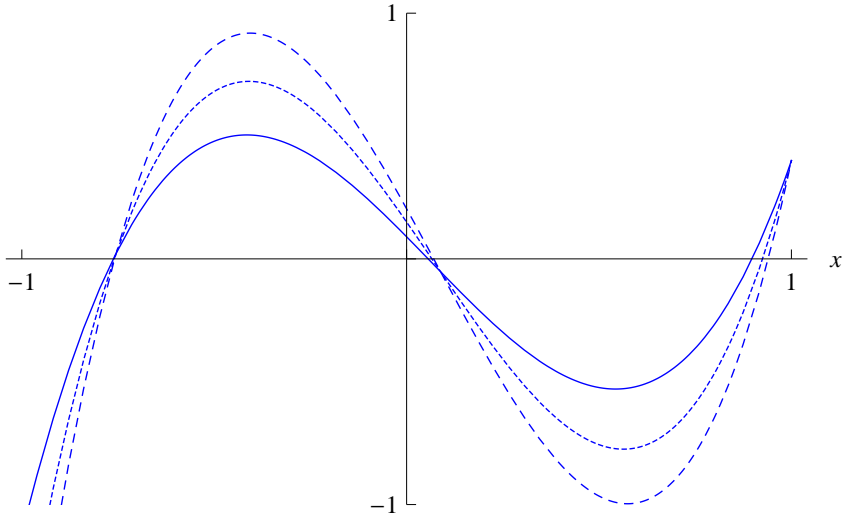}}
\caption{The graphs of $P_{3}^{\alpha ,\beta ,N+\varepsilon }(x)$ for some values of $\protect\varepsilon $.}
\label{FigJac}
\end{figure}

In Table \ref{tableJac} we show the value of zeros of $P_{3}^{\alpha ,\beta
,N}(x)$, with $\alpha =\beta =0$, for several choices of $N$ 
\begin{table}[th]
\caption{Zeros of $P_{3}^{\protect\alpha ,\protect\beta ,N}(x)$ for some
values of $N$.}
\label{tableJac}\centering
\begin{tabular}{>{\columncolor[gray]{.9}}p{0.2\linewidth}>{\columncolor[gray]{.9}}p{0.2\linewidth}>{\columncolor[gray]{.9}}p{0.2\linewidth}>{\columncolor[gray]{.9}}p{0.2\linewidth}}
\hline
$N$ & $x_{3,1}^{N}(0,0)$ & $x_{3,2}^{N}(0,0)$ & $x_{3,3}^{N}(0,0)$ \\ \hline
$0$ & $-0.774597$ & $0$ & $0.774597$ \\ 
$1$ & $-0.757872$ & $0.0753429$ & $0.955257$ \\ 
$10$ & $-0.755305$ & $0.0868168$ & $0.994575$ \\ 
$100$ & $-0.755004$ & $0.0881528$ & $0.999446$ \\ 
$1000$ & $-0.754974$ & $0.0882886$ & $0.999944$ \\ \hline
\end{tabular}%
\end{table}
Notice that the largest zero converges to $1$ and the other two zeros
converge to the zeros of the Jacobi polynomial $P_{2}^{2,0}(x)$, that is,
they converge to $x_{2,1}(2,0)=-0.75497$ and $x_{2,2}(4)=0.0883037$. Note
also that all the zeros increase when $N$ increase.

\subsection{Laguerre type (Laguerre-Koornwinder) orthogonal polynomials}

Next, we will deal with $p_{n}(x)=L_{n}^{\alpha }(x)$, that is, the
classical monic Laguerre polynomials, which are orthogonal with respect to
the measure $d\mu _{\alpha }=x^{\alpha }e^{-x}dx$, $\alpha >-1$, supported
on $[0,+{\infty })$. We will consider the Uvarov perturbation on $\mu
_{\alpha }$ with $a=0$ 
\begin{equation}
d\mu _{N}=d\mu _{\alpha }+N\delta _{0},N\geq 0.  \label{Laguerre-type}
\end{equation}%
The polynomials $L_{n}^{\alpha ,N}(x),$ orthogonal with respect to (\ref%
{Laguerre-type}), were also obtained by T. H. Koornwinder \cite%
{Koornwinder84} as a special limit case of the Jacobi-Koornwinder (Jacobi
type) orthogonal polynomials. In this direction, concerning analytic
properties of these polynomials, many contributions have been done in the
last years (see \cite{WKB96}, \cite{DimitrovMarcellanRafaeli}, \cite%
{DuenasMarcellan07}, \cite{Koekoek_1990}, among others). The connection
formula of $L_{n}^{\alpha ,N}(x)$ is 
\begin{equation}
L_{n}^{\alpha ,N}(x)=L_{n}^{\alpha }(x)+NK_{n-1}(0,0)\,x\,L_{n-1}^{\alpha
+2}(x),  \label{Lag-conexion}
\end{equation}%
where 
\begin{equation*}
K_{n-1}(0,0)=\frac{\Gamma (n+\alpha +1)}{\Gamma (n)\Gamma (\alpha +1)\Gamma
(\alpha +2)}.
\end{equation*}

Now, we will analyze the behavior of their zeros. Let denote by $%
(x_{n,k}^{N}(\alpha))$ and $(x_{n,k}(\alpha))$ the zeros of the Laguerre
type and the classical Laguerre orthogonal polynomials, respectively,
arranged in an increasing order. Applying the results of Section 4, we obtain

\setcounter{thm}{6}

\begin{thm}
\label{T.Lag} \label{TLag} The inequalities 
\begin{equation*}
\begin{array}{r}
0<x_{n,1}^{N}(\alpha )<x_{n,1}(\alpha )<x_{n-1,1}(\alpha
+2)<x_{n,2}^{N}(\alpha )<x_{n,2}(\alpha )<\cdots \vspace{0.3cm} \\ 
<x_{n-1,n-1}(\alpha +2)<x_{n,n}^{N}(\alpha )<x_{n,n}(\alpha )%
\end{array}%
\end{equation*}%
hold for every $\alpha >-1$. Moreover, each $x_{n,k}^{N}(\alpha )$ is a
decreasing function of $N$ and, for each $k=1,\ldots ,n-1$, 
\begin{equation*}
\lim_{N\rightarrow \infty }x_{n,1}^{N}(\alpha )=0,\ \ \lim_{N\rightarrow
\infty }x_{n,k+1}^{N}(\alpha )=x_{n-1,k}(\alpha +2),
\end{equation*}%
as well as 
\begin{equation*}
\begin{array}{l}
\lim\limits_{N\rightarrow \infty }Nx_{n,1}^{N}(\alpha )=g_{n}(\alpha ),%
\vspace{0.3cm} \\ 
\lim\limits_{N\rightarrow \infty }N[x_{n,k+1}^{N}(\alpha )-x_{n-1,k}(\alpha
+2)]=\displaystyle\frac{g_{n}(\alpha )}{\alpha +2},%
\end{array}%
\end{equation*}
where 
\begin{equation}  \label{gn}
g_{n}(\alpha)=\displaystyle\frac{\Gamma(n)\Gamma(\alpha+2)\Gamma(\alpha+3)}{%
\Gamma(n+\alpha+2)}.
\end{equation}
\end{thm}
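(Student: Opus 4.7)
The plan is to invoke Theorem 3 directly, specializing to the Laguerre measure with $a=0$. Since $L_n^{\alpha}(0)\neq 0$ for all $n$, the Christoffel iteration at $a=0$ produces
$$d\mu^{\ast\ast} = x^{2}\,x^{\alpha}e^{-x}\,dx = x^{\alpha+2}e^{-x}\,dx,$$
whose monic orthogonal polynomial is precisely $L_{n-1}^{\alpha+2}(x)$. Hence $p_{n-1}^{\ast\ast}(0;x)=L_{n-1}^{\alpha+2}(x)$ and $B_n=K_{n-1}(0,0)$, so the general connection formula (\ref{EqT2}) specializes to (\ref{Lag-conexion}). With these identifications the interlacing chain of the theorem, the monotonicity of each $x_{n,k}^{N}(\alpha)$ in $N$, and the qualitative limits $x_{n,1}^{N}(\alpha)\to 0$ and $x_{n,k+1}^{N}(\alpha)\to x_{n-1,k}(\alpha+2)$ are immediate from (\ref{EqT3-1}) and (\ref{EqT3-2}).

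For the first speed of convergence I would substitute into (\ref{LimZeros-1}):
$$\lim_{N\to\infty} N\,x_{n,1}^{N}(\alpha)=\frac{-L_n^{\alpha}(0)}{K_{n-1}(0,0)\,L_{n-1}^{\alpha+2}(0)}.$$
Using the monic values $L_n^{\alpha}(0)=(-1)^n\Gamma(n+\alpha+1)/\Gamma(\alpha+1)$ and $L_{n-1}^{\alpha+2}(0)=(-1)^{n-1}\Gamma(n+\alpha+2)/\Gamma(\alpha+3)$, together with the expression for $K_{n-1}(0,0)$ already supplied, the sign $(-1)^n/(-1)^{n-1}$ combines with the leading minus to produce a positive quotient, and the Gamma factors cancel down to $g_n(\alpha)=\Gamma(n)\Gamma(\alpha+2)\Gamma(\alpha+3)/\Gamma(n+\alpha+2)$, as required.

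For the second limit I would mimic the strategy used in the Jacobi proof of Theorem 5. The starting point is again (\ref{LimZeros-1}):
$$\lim_{N\to\infty} N\bigl[x_{n,k+1}^{N}(\alpha)-x_{n-1,k}(\alpha+2)\bigr]=\frac{-L_n^{\alpha}(x_{n-1,k}(\alpha+2))}{K_{n-1}(0,0)\,x_{n-1,k}(\alpha+2)\,[L_{n-1}^{\alpha+2}]'(x_{n-1,k}(\alpha+2))}.$$
To eliminate the values in the denominator I need a Laguerre analogue of the shifted-parameter identity used for Jacobi, namely a relation of the form
$$n\,x\,L_{n-1}^{\alpha+2}(x) = A\,L_n^{\alpha}(x) + B(x)\,[L_n^{\alpha}]'(x),$$
derivable from the standard relations between $L_n^{\alpha}$, $L_n^{\alpha+1}$, $L_n^{\alpha+2}$, and their derivatives. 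Differentiating this identity and invoking the Laguerre differential equation $xy''+(\alpha+1-x)y'+ny=0$ yields an expression for $x_{n-1,k}(\alpha+2)\,[L_{n-1}^{\alpha+2}]'(x_{n-1,k}(\alpha+2))$ purely in terms of $L_n^{\alpha}(x_{n-1,k}(\alpha+2))$. Substitution cancels the latter factor and leaves the asserted value $g_n(\alpha)/(\alpha+2)$.

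The main obstacle is the last step: locating (or deriving) the correct algebraic/differential identity connecting $L_{n-1}^{\alpha+2}$ with $L_n^{\alpha}$ and $[L_n^{\alpha}]'$, and then combining it with the Laguerre ODE so that everything collapses to the simple constant $g_n(\alpha)/(\alpha+2)$. Once this identity is in hand, the remaining work is routine Gamma-function bookkeeping analogous to Theorem 5.
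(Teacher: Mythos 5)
Your proposal follows the paper's proof essentially verbatim: specialize Theorem 3 to $a=0$ with $p_{n-1}^{\ast\ast}(0;x)=L_{n-1}^{\alpha+2}(x)$ and $B_n=K_{n-1}(0,0)$, then evaluate both limits in (\ref{LimZeros-1}) by Gamma-function bookkeeping. The one identity you leave open is exactly the one the paper uses, namely $xL_{n-1}^{\alpha +2}(x)=L_{n}^{\alpha }(x)+\frac{\alpha +1}{n}[L_{n}^{\alpha }(x)]^{\prime }$, which upon evaluation at a zero $x_{n-1,k}(\alpha+2)$, differentiation, and substitution of the Laguerre equation yields $x_{n-1,k}(\alpha +2)\,[L_{n-1}^{\alpha +2}]^{\prime }(x_{n-1,k}(\alpha +2))=\frac{-(n+\alpha +1)}{\alpha +1}L_{n}^{\alpha }(x_{n-1,k}(\alpha +2))$, so the quotient collapses to $g_{n}(\alpha )/(\alpha +2)$ exactly as you predicted.
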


\begin{proof}
It remains to show the limits above. From (\ref{LimZeros-1}) 
\begin{equation*}
\lim\limits_{N\rightarrow \infty }Nx_{n,1}^{N}(\alpha )=\frac{%
-L_{n}^{\alpha}(0)}{K_{n-1}(0,0)L_{n-1}^{\alpha+2}(0)}.
\end{equation*}
Since 
\begin{equation*}
L_{n}^{\alpha}(0)=\frac{(-1)^n\Gamma(n+\alpha+1)}{\Gamma(\alpha+1)}\ \ %
\mbox{and}\ \ K_{n-1}(0,0)=\frac{\Gamma(n+\alpha +1)}{\Gamma(n)\Gamma(%
\alpha+1)\Gamma(\alpha +2)},
\end{equation*}
we obtain 
\begin{equation*}
\displaystyle\frac{-L_{n}^{\alpha}(0)}{K_{n-1}(0,0)L_{n-1}^{\alpha+2}(0)}= %
\displaystyle\frac{\Gamma(n)\Gamma(\alpha+2)\Gamma(\alpha+3)}{%
\Gamma(n+\alpha+2)}=g_{n}(\alpha).
\end{equation*}

Also from (\ref{LimZeros-1}) 
\begin{equation*}
\begin{array}{l}
\displaystyle\lim\limits_{N\rightarrow \infty }N[x_{n,k+1}^{N}(\alpha
)-x_{n-1,k}(\alpha+2)]\vspace{0.3cm} \\ 
\hspace{0.5cm}\displaystyle=\frac{-L_{n}^{\alpha}(x_{n-1,k}(\alpha+2))}{%
K_{n-1}(0,0)x_{n-1,k}(\alpha+2)[L_{n-1}^{\alpha+2}(x)]^{\prime }\big|%
_{x=x_{n-1,k}(\alpha+2)}}.%
\end{array}%
\end{equation*}
On the other hand, it is easily to verify that 
\begin{equation*}
xL_{n-1}^{\alpha+2}(x)=L_{n}^{\alpha}(x)+\displaystyle\frac{\alpha+1}{n}%
[L_{n}^{\alpha}(x)]^{\prime }.
\end{equation*}
Thus, 
\begin{equation*}
[L_{n}^{\alpha}(x)]^{\prime }\big|_{x=x_{n-1,k}(\alpha+2)}=\displaystyle-%
\frac{n}{\alpha+1}L_{n}^{\alpha}(x_{n-1,k}(\alpha+2))
\end{equation*}
and 
\begin{equation*}
\begin{array}{l}
x_{n-1,k}(\alpha+2)[L_{n}^{\alpha}(x)]^{\prime }\big|_{x=x_{n-1,k}(\alpha+2)}%
\vspace{0.3cm} \\ 
\hspace{0.5cm}=\displaystyle\lbrack L_{n}^{\alpha}(x)]^{\prime }\big|%
_{x=x_{n-1,k}(\alpha+2)} +\displaystyle\frac{\alpha+1}{n}[L_{n}^{%
\alpha}(x)]^{\prime \prime }\big|_{x=x_{n-1,k}(\alpha+2)}.%
\end{array}%
\end{equation*}
Now, using the last two equalities and the differential equation for the
Laguerre polynomials 
\begin{equation*}
x[L_{n}^{\alpha}(x)]^{\prime \prime
}+(\alpha+1-x)[L_{n}^{\alpha}(x)]^{\prime }+nL_{n}^{\alpha}(x)=0
\end{equation*}
we obtain 
\begin{equation*}
x_{n-1,k}(\alpha+2)[L_{n}^{\alpha}(x)]^{\prime }\big|_{x=x_{n-1,k}(%
\alpha+2)}= \displaystyle\frac{-(n+\alpha+1)}{\alpha+1}L_{n}^{%
\alpha}(x_{n-1,k}(\alpha+2)).
\end{equation*}
Therefore 
\begin{equation*}
\begin{array}{l}
\displaystyle\lim\limits_{N\rightarrow \infty }N[x_{n,k+1}^{N}(\alpha
)-x_{n-1,k}(\alpha+2)]\vspace{0.3cm} \\ 
\hspace{0.5cm}\displaystyle=\frac{-L_{n}^{\alpha}(x_{n-1,k}(\alpha+2))}{%
K_{n-1}(0,0)x_{n-1,k}(\alpha+2)[L_{n-1}^{\alpha+2}(x)]^{\prime }\big|%
_{x=x_{n-1,k}(\alpha+2)}}\vspace{0.3cm} \\ 
\hspace{0.5cm}\displaystyle=\frac{\Gamma(n)\Gamma(\alpha+2)\Gamma(\alpha+2)}{%
\Gamma(n+\alpha+2)}\vspace{0.3cm} \\ 
\hspace{0.5cm}\displaystyle=\frac{g_{n}(\alpha)}{\alpha+2}.%
\end{array}%
\end{equation*}
\end{proof}

In order to illustrate the results of Theorem \ref{T.Lag}, we enclose the
graphs of $L_{3}^{\alpha ,N+\varepsilon }(x)$, for $\alpha =2$ and some
values of $\varepsilon >0$, in order to show the monotonicity of the zeros
of $L_{3}^{\alpha ,N}(x)$ as a function of the mass $N$. See Figure \ref%
{FigLag}. 
\begin{figure}[th]
\centerline{\includegraphics[width=11cm,keepaspectratio]{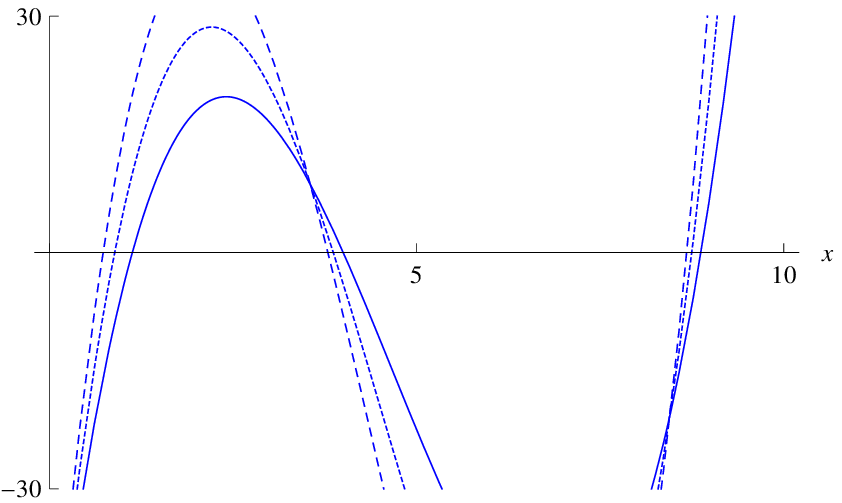}}
\caption{The graphs of $L_{3}^{\alpha ,N+\varepsilon}(x)$
for some values of $\protect\varepsilon $.}
\label{FigLag}
\end{figure}

Table \ref{tableLag} shows the zeros of $L_{3}^{\alpha ,N}(x)$, with $\alpha
=2$, for several choices of $N$. 
\begin{table}[th]
\caption{Zeros of $L_{3}^{\protect\alpha ,N}(x)$ for some values of $N$.}
\label{tableLag}\centering
\begin{tabular}{>{\columncolor[gray]{.9}}p{0.2\linewidth}>{\columncolor[gray]{.9}}p{0.2\linewidth}>{\columncolor[gray]{.9}}p{0.2\linewidth}>{\columncolor[gray]{.9}}p{0.2\linewidth}}
\hline
$N$ & $x_{3,1}^{N}(2)$ & $x_{3,2}^{N}(2)$ & $x_{3,3}^{N}(2)$ \\ \hline
$0$ & $1.51739$ & $4.31158$ & $9.17103$ \\ 
$1$ & $0.321731$ & $3.64053$ & $8.53774$ \\ 
$10$ & $0.0390611$ & $3.5604$ & $8.45936$ \\ 
$100$ & $0.00399042$ & $3.55151$ & $8.45049$ \\ 
$1000$ & $0.00039990$ & $3.55061$ & $8.44959$ \\ \hline
\end{tabular}%
\end{table}
Observe that the smallest zero converges to $0$ and the other two zeros
converge to the zeros of the Laguerre polynomial $L_{2}^{4}(x)$, that is,
they converge to $x_{2,1}(4)=3.55051$ and $x_{2,2}(4)=8.44949$. Notice that
all the zeros decrease when $N$ increases.

\subsection{Hermite type orthogonal polynomials}

Using the symmetrization process for the special case of Laguerre type
orthogonal polynomials when $\alpha = -1/2$, we obtain the Hermite type
orthogonal polynomials $H_{n}^{N}(x)$, which are orthogonal with respect to
the measure 
\begin{equation*}
d\mu _{N}=e^{-x^{2}}dx+N\delta _{0}.
\end{equation*}%
whose support is the real line. It easy to see that they are symmetric with
respect to the origin and 
\begin{equation}
H_{2n}^{N}(x)=L_{n}^{(-1/2,N)}(x^{2})  \label{H2n}
\end{equation}%
and 
\begin{equation}
H_{2n+1}^{N}(x)=H_{2n+1}^{0}(x)=xL_{n}^{(1/2,0)}(x^{2}).  \label{H2n+1}
\end{equation}%
Let denote by $(h_{2n,k}^{N})$, $1\leq k\leq 2n$, and $(h_{2n+1,k})$, $1\leq
k\leq 2n+1$, the zeros of the Hermite type polynomials $H_{2n}^{N}(x)$ and $%
H_{2n+1}^{N}(x)$, respectively, ordered as follows : $h_{2n,n}^{N}<\cdots
<h_{2n,1}^{N}(\alpha )$ and $h_{2n+1,n}<\cdots <h_{2n+1,1}$. Because of the
symmetry property of these polynomials, we get $%
h_{2n,k}^{N}=-h_{2n,2n-k+1}^{N}$ and $h_{2n+1,k}=-h_{2n+1,2n+2-k}$, $%
k=1,\ldots ,n$. Furthermore, from (\ref{H2n}) and (\ref{H2n+1}), we have 
\begin{equation*}
\left[ h_{2n,k}^{N}\right] ^{2}=x_{n,n-k+1}^{N}(-1/2)
\end{equation*}%
and 
\begin{equation*}
\left[ h_{2n+1,k}\right] ^{2}=x_{n,n-k+1}(1/2).
\end{equation*}%
Then, as a straightforward consequence of Theorem \ref{TLag}, we get

\setcounter{thm}{7}

\begin{thm}
Let $n\in\mathbb{N}$. Then\newline

\begin{itemize}
\item[(i)] The inequalities 
\begin{equation*}
\begin{array}{r}
0<\left[ h_{2n,n}^{N}\right] ^{2}<x_{n,1}(-1/2)<x_{n-1,1}(3/2)<\left[
h_{2n,n-1}^{N}\right] ^{2}<x_{n,2}(-1/2)<\cdots \vspace{0.3cm} \\ 
<x_{n-1,n-1}(3/2)<\left[ h_{2n,1}^{N}\right] ^{2}<x_{n,n}(-1/2)%
\end{array}%
\end{equation*}%
hold. Moreover, each $h_{2n,k}^{N}$, $k=1,\ldots ,n$, is a decreasing
function of $N$ and, for each $k=1,\ldots ,n-1$, 
\begin{equation*}
\lim_{N\rightarrow \infty }\left[ h_{2n,n}^{N}\right] ^{2}=0,\ \
\lim_{N\rightarrow \infty }\left[ h_{2n,k}^{N}\right] ^{2}=x_{n-1,n-k}(3/2),
\end{equation*}%
and 
\begin{equation}
\begin{array}{l}
\lim\limits_{N\rightarrow \infty }N\left[ h_{2n,n}^{N}\right]
^{2}=g_{n}(-1/2),\vspace{0.3cm} \\ 
\lim\limits_{N\rightarrow \infty }N[\left[ h_{2n,k}^{N}\right]
^{2}-x_{n-1,n-k}(3/2)]=\displaystyle\frac{g_{n}(-1/2)}{3/2};%
\end{array}
\label{ggn}
\end{equation}

\item[(ii)] The inequalities 
\begin{equation*}
\begin{array}{r}
0<\left[h_{2n+1,n}\right]^2<x_{n,1}(1/2)<x_{n-1,1}(5/2)<\left[h_{2n+1,n-1}%
\right]^2<x_{n,2}(1/2)<\cdots\vspace{0.3cm} \\ 
<x_{n-1,n-1}(5/2)<\left[h_{2n,1}\right]^2<x_{n,n}(1/2)%
\end{array}%
\end{equation*}
hold.
\end{itemize}
\end{thm}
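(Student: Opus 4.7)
The plan is to reduce everything to the Laguerre-type case already settled in Theorem \ref{TLag}, using the symmetrization formulas (\ref{H2n}) and (\ref{H2n+1}).

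For part (i), the relation $H_{2n}^{N}(x)=L_{n}^{(-1/2,N)}(x^{2})$ shows that the nonnegative zeros $h_{2n,n}^{N}<\cdots <h_{2n,1}^{N}$ of $H_{2n}^{N}$ are exactly the square roots of the zeros $x_{n,1}^{N}(-1/2)<\cdots <x_{n,n}^{N}(-1/2)$ of the Laguerre-type polynomial, via the identification $[h_{2n,k}^{N}]^{2}=x_{n,n-k+1}^{N}(-1/2)$. First I would substitute $\alpha=-1/2$ into the interlacing chain of Theorem \ref{TLag} and rewrite it in terms of the $[h_{2n,k}^{N}]^{2}$, the key point being the index reversal $k\mapsto n-k+1$ that turns the ascending Laguerre ordering into the descending Hermite ordering. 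The monotonicity statement then follows immediately: since each $x_{n,j}^{N}(-1/2)$ decreases in $N$, so does $[h_{2n,k}^{N}]^{2}$, and hence (the positive root) $h_{2n,k}^{N}$ decreases in $N$. For the limits, the identities $\lim_{N\to\infty}x_{n,1}^{N}(-1/2)=0$ and $\lim_{N\to\infty}x_{n,k+1}^{N}(-1/2)=x_{n-1,k}(3/2)$ translate directly into $\lim [h_{2n,n}^{N}]^{2}=0$ and $\lim [h_{2n,k}^{N}]^{2}=x_{n-1,n-k}(3/2)$. The speed of convergence (\ref{ggn}) is obtained simply by reading off the limits in Theorem \ref{TLag} at $\alpha=-1/2$, where $\alpha+2=3/2$.

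For part (ii), the identity $H_{2n+1}^{N}(x)=H_{2n+1}^{0}(x)=xL_{n}^{(1/2,0)}(x^{2})$ shows that the odd-degree Hermite-type polynomial is \emph{independent} of the mass $N$; its zeros are $0$ together with $\pm\sqrt{x_{n,k}(1/2)}$, where $x_{n,k}(1/2)$ are the classical Laguerre zeros. Thus $[h_{2n+1,k}]^{2}=x_{n,n-k+1}(1/2)$ for $k=1,\ldots,n$. To establish the claimed interlacing with the zeros $x_{n-1,k}(5/2)$ of $L_{n-1}^{5/2}$, I would invoke Theorem \ref{TLag} with $\alpha=1/2$: taking the two boundary limits $N\to 0^{+}$ and $N\to\infty$ in the interlacing chain forces $x_{n,k}(1/2)<x_{n-1,k}(5/2)<x_{n,k+1}(1/2)$ for $k=1,\ldots,n-1$, which is exactly the classical interlacing between zeros of $L_{n}^{\alpha}$ and $L_{n-1}^{\alpha+2}$. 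Rewriting in terms of $[h_{2n+1,k}]^{2}$ via the index reversal yields the stated chain.

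The main bookkeeping obstacle is the index inversion caused by the opposite orderings of Hermite versus Laguerre zeros, which shifts every formula by $k\mapsto n-k+1$; once that substitution is performed consistently, the statements drop out of Theorem \ref{TLag} without further computation. A minor point worth verifying is that the second limit in (\ref{ggn}) comes out with $g_{n}(-1/2)/(3/2)$ and not $g_{n}(-1/2)/(\alpha+2)$ with some other $\alpha$; this is immediate since specializing Theorem \ref{TLag} at $\alpha=-1/2$ gives denominator $\alpha+2=3/2$. No differential equation manipulations are needed here, because the hard analytic work has already been carried out in Theorem \ref{TLag}.
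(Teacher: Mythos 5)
Your proposal is correct and follows exactly the route the paper takes: the paper offers no separate argument for this theorem, declaring it a ``straightforward consequence'' of Theorem \ref{TLag} via the symmetrization identities (\ref{H2n}) and (\ref{H2n+1}) together with the index reversal $\left[h_{2n,k}^{N}\right]^{2}=x_{n,n-k+1}^{N}(-1/2)$, which is precisely the substitution you carry out at $\alpha=-1/2$ (and $\alpha=1/2$ for the odd case). The only remark worth adding is that in part (ii) the identification $\left[h_{2n+1,k}\right]^{2}=x_{n,n-k+1}(1/2)$ makes the entries $\left[h_{2n+1,n-k+1}\right]^{2}$ and $x_{n,k}(1/2)$ coincide, so those links of the chain are equalities rather than strict inequalities --- an imprecision already present in the paper's statement, not a defect of your argument.
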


\noindent The function $g_{n}(\alpha )$ in (\ref{ggn}) is defined in (\ref%
{gn}). More details about the zeros of Hermite-type polynomials can be found
in \cite{MarcRafa2010}.


\section{Electrostatic interpretation}

\subsection{Some preliminaries results}

We also provided an alternative connection formula for the MOPS $%
\{p_{n}^{N}(a;x)\}_{n\geq0}$ in terms of the kernel polynomials $\{
p_{n}^{\ast}(a;x)\}_{n\geq0}$.

\setcounter{thm}{8}

\begin{thm}
The polynomials $\{p_{n}^{N}(a;x)\}_{n\geq0}$ can be also represented as 
\begin{equation}  \label{02nd-Rel}
p_{n}^{N}(a;x)=p_{n}^{\ast}(a;x)+c_{n}\,p_{n-1}^{\ast}(a;x),
\end{equation}
where%
\begin{equation}  \label{cn}
c_{n}=- \frac{1+NK_{n}(a,a)}{1+NK_{n-1}(a,a)}\frac{p_{n-1}(a)}{p_{n}(a)}%
\,\gamma _{n}\ \ \ \mbox{and}\ \ \ \gamma _{n}=\dfrac{\Vert p_{n}\Vert _{\mu
}^{2}}{\Vert p_{n-1}\Vert _{\mu}^{2}}.
\end{equation}
\end{thm}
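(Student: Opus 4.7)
The plan is to derive the new representation directly from the existing connection formula (\ref{Con-For}) of \'Alvarez-Nodarse, Marcell\'an and Petronilho by re-expressing its ingredients in the basis $\{K_{n-1}(a,\cdot), K_n(a,\cdot)\}$ and then converting back to the kernel polynomials $p_{n-1}^{\ast}(a;x), p_n^{\ast}(a;x)$ via (\ref{p1}).

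First I would use the defining telescoping identity for the reproducing kernel,
\begin{equation*}
K_n(a,x) = K_{n-1}(a,x) + \frac{p_n(a)\,p_n(x)}{\|p_n\|_{\mu}^{2}},
\end{equation*}
to isolate $p_n(x)$ as
\begin{equation*}
p_n(x) = \frac{\|p_n\|_{\mu}^{2}}{p_n(a)}\bigl[K_n(a,x) - K_{n-1}(a,x)\bigr].
\end{equation*}
Substituting this into (\ref{Con-For}) yields
\begin{equation*}
p_n^{N}(a;x) = \frac{\|p_n\|_{\mu}^{2}}{p_n(a)}\, K_n(a,x) - \left[\frac{\|p_n\|_{\mu}^{2}}{p_n(a)} + \frac{N p_n(a)}{1+N K_{n-1}(a,a)}\right] K_{n-1}(a,x).
\end{equation*}
The first term is exactly $p_n^{\ast}(a;x)$ by (\ref{p1}), and $K_{n-1}(a,x) = \tfrac{p_{n-1}(a)}{\|p_{n-1}\|_{\mu}^{2}}\, p_{n-1}^{\ast}(a;x)$ by the same formula applied at level $n-1$, so after this substitution the structure $p_n^{\ast}(a;x) + c_n\, p_{n-1}^{\ast}(a;x)$ emerges automatically.

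The last step is an algebraic simplification of the bracketed coefficient. Putting the two fractions over a common denominator gives a numerator
\begin{equation*}
\|p_n\|_{\mu}^{2}\bigl(1+N K_{n-1}(a,a)\bigr) + N p_n(a)^{2} = \|p_n\|_{\mu}^{2}\left(1 + N K_{n-1}(a,a) + N\,\frac{p_n(a)^{2}}{\|p_n\|_{\mu}^{2}}\right),
\end{equation*}
and the quantity in parentheses equals $1 + N K_n(a,a)$ thanks again to the telescoping identity at $x=a$. After multiplying by $p_{n-1}(a)/\|p_{n-1}\|_{\mu}^{2}$ to convert $K_{n-1}(a,x)$ into $p_{n-1}^{\ast}(a;x)$ and recognizing $\gamma_n = \|p_n\|_{\mu}^{2}/\|p_{n-1}\|_{\mu}^{2}$, the coefficient of $p_{n-1}^{\ast}(a;x)$ becomes exactly the claimed $c_n$.

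There is no real obstacle here, since the proof is essentially a bookkeeping exercise: the only thing to be careful about is keeping the sign consistent (the minus sign in front of $c_n$ comes from the minus sign in (\ref{Con-For})) and correctly collapsing $NK_{n-1}(a,a) + Np_n(a)^{2}/\|p_n\|_{\mu}^{2}$ to $NK_n(a,a)$ inside the factor $1+NK_n(a,a)$. An alternative but equivalent route would be to observe that $p_n^{N}(a;x)$ is the monic polynomial of degree $n$ orthogonal with respect to $d\mu_N$ and to verify that $p_n^{\ast}(a;x) + c_n\, p_{n-1}^{\ast}(a;x)$ shares this orthogonality, which fixes $c_n$ uniquely; however, the direct computation above is shorter and yields the explicit value of $c_n$ at the same time.
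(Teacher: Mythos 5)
Your proposal is correct and follows essentially the same route as the paper: both start from the connection formula (\ref{Con-For}), use the telescoping identity $K_n(a,x)-K_{n-1}(a,x)=p_n(a)p_n(x)/\Vert p_n\Vert_{\mu}^{2}$ together with (\ref{p1}) to pass to the kernel polynomials $p_n^{\ast}(a;x)$ and $p_{n-1}^{\ast}(a;x)$, and then collapse the coefficient via the confluent identity $K_n(a,a)-K_{n-1}(a,a)=p_n(a)^2/\Vert p_n\Vert_{\mu}^{2}$. The only cosmetic difference is that you work in the kernel basis throughout and convert at the end, and you make explicit the final algebraic simplification that the paper leaves implicit.
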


\begin{proof}
Using (\ref{Kernel}), we can write $p_{n}(x)$ as 
\begin{equation}  \label{p11}
p_{n}(x)=\frac{\Vert p_{n}\Vert _{\mu} ^{2}}{p_{n}(a)}\left[
K_{n}(a,x)-K_{n-1}(a,x)\right].
\end{equation}
Now, from (\ref{p1}) and (\ref{p11}), we have 
\begin{equation}  \label{pneqKnKn-1}
p_{n}(x)=p_{n}^{\ast}(a;x)-\gamma _{n}\frac{p_{n-1}(a)}{p_{n}(a)}%
p_{n-1}^{\ast}(a;x).
\end{equation}%
Therefore, substituting (\ref{p1}) and (\ref{pneqKnKn-1}) in (\ref{Con-For}%
), we obtain 
\begin{equation*}
\begin{array}{lll}
p_{n}^{N}(a;x) & = & \displaystyle\left(p_{n}^{\ast}(a;x)-\gamma _{n}\frac{%
p_{n-1}(a)}{p_{n}(a)}p_{n-1}^{\ast}(a;x)\right)\vspace{0.3cm} \\ 
&  & \displaystyle\hspace{1.5cm}-\frac{Np_{n}(a)}{1+NK_{n-1}(a,a)}\dfrac{%
p_{n-1}(a)}{\Vert p_{n-1}\Vert _{\mu}^{2}}p_{n-1}^{\ast}(a;x)\vspace{0.3cm}
\\ 
& = & p_{n}^{\ast}(a;x)+c_{n}\,p_{n-1}^{\ast}(a;x),%
\end{array}%
\end{equation*}
where $c_n$ is given in (\ref{cn}).
\end{proof}

\setcounter{thm}{9}

\begin{thm}
The MOPS $\{p_{n}^{\ast }(a;x)\}_{n\geq 0}$ satisfies the three-term
recurrence relation 
\begin{equation}  \label{TTRRK}
x\,p_{n}^{\ast }(a;x)=p_{n+1}^{\ast }(a;x)+\beta _{n}^{\ast }\,p_{n}^{\ast
}(a;x)+\gamma _{n}^{\ast }\,p_{n-1}^{\ast }(a;x),\ \ n\geq 0,
\end{equation}%
with initial conditions $p_{0}^{\ast }(a;x)=1$ and $p_{-1}^{\ast }(a;x)=0$.
The coefficients of the TTRR are 
\begin{equation}
\beta _{n}^{\ast }=\beta _{n+1}+\dfrac{p_{n+2}(a)}{p_{n+1}(a)}-\dfrac{%
p_{n+1}(a)}{p_{n}(a)},\ \ \ n\geq 0,  \label{coef-RRTTK}
\end{equation}%
and 
\begin{equation}
\gamma _{n}^{\ast }=\dfrac{p_{n+1}(a)p_{n-1}(a)}{\left[ p_{n}(a)\right] ^{2}}%
\gamma _{n}>0\, \ \ n\geq 1.  \label{coef-RRTTK-2}
\end{equation}
\end{thm}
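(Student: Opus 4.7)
The plan is to observe that $\{p_n^*(a;x)\}_{n\geq 0}$ is a MOPS with respect to the measure $d\mu^* = (x-a)d\mu$, so by Proposition 2 it automatically satisfies a three-term recurrence of the form (\ref{TTRRK}); the content of the theorem is the identification of the coefficients $\beta_n^*$ and $\gamma_n^*$. The main tool will be the explicit representation (\ref{p1}), which I will rewrite as
\begin{equation*}
(x-a)\,p_n^*(a;x) = p_{n+1}(x) - A_n\,p_n(x), \qquad A_n := \frac{p_{n+1}(a)}{p_n(a)}.
\end{equation*}

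To compute $\beta_n^*$, I would use a leading-coefficient comparison. Write $p_{n+1}(x) = x^{n+1} - s_{n+1}x^n + \cdots$, where $s_{n+1} = \sum_{k=0}^{n}\beta_k$ is determined by the TTRR (\ref{Recurrence}). Substituting into the displayed equation above and performing the polynomial division by $x-a$, I would read off that the coefficient of $x^{n-1}$ in $p_n^*(a;x)$ is $a - s_{n+1} - A_n$. Matching the coefficient of $x^n$ on both sides of the desired relation
\begin{equation*}
x\,p_n^*(a;x) = p_{n+1}^*(a;x) + \beta_n^*\,p_n^*(a;x) + \gamma_n^*\,p_{n-1}^*(a;x)
\end{equation*}
then yields $\beta_n^* = (s_{n+2}-s_{n+1}) + (A_{n+1}-A_n) = \beta_{n+1} + A_{n+1} - A_n$, which is (\ref{coef-RRTTK}).

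For $\gamma_n^*$, I would use the formula $\gamma_n^* = \|p_n^*\|_{\mu^*}^2/\|p_{n-1}^*\|_{\mu^*}^2$ together with
\begin{equation*}
\|p_n^*\|_{\mu^*}^2 = \int p_n^*(a;x)\,(x-a)\,p_n^*(a;x)\,d\mu(x) = \int p_n^*(a;x)\bigl[p_{n+1}(x) - A_n p_n(x)\bigr]d\mu(x).
\end{equation*}
Combining this with the kernel representation $p_n^*(a;x) = \dfrac{\|p_n\|_\mu^2}{p_n(a)} K_n(a,x)$ from (\ref{p1}) and the reproducing property of $K_n$, the integral collapses to $\int p_n^*(a;x) p_n(x)\,d\mu(x) = \|p_n\|_\mu^2$, so that $\|p_n^*\|_{\mu^*}^2 = -A_n\|p_n\|_\mu^2$. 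Taking the ratio gives $\gamma_n^* = (A_n/A_{n-1})\,\gamma_n$, which is (\ref{coef-RRTTK-2}); positivity is automatic since $\gamma_n^*$ is a ratio of norms of a MOPS for a positive measure.

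The only delicate point is bookkeeping: one must verify that $A_n\neq 0$ for every $n$ (which holds because $a\notin C_0(\Sigma)$ forces $p_n(a)\neq 0$, as noted right after the definition of $p_n^*(a;x)$ in Section 2.1), and that the sign of $-A_n$ comes out positive so that $\|p_n^*\|_{\mu^*}^2>0$ — this follows from the interlacing (\ref{interlace-i})–(\ref{interlace-ii}), which fixes the sign of $p_{n+1}(a)/p_n(a)$ according to whether $a\leq \xi$ or $a\geq \eta$. Beyond this sign check, the remaining steps are routine algebra with the TTRR for $\{p_n(x)\}$ evaluated at $x=a$.
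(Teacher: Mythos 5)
Your argument is correct. The $\gamma_n^*$ part is essentially identical to the paper's: both compute $\gamma_n^*=\Vert p_n^*\Vert_{\mu^*}^2/\Vert p_{n-1}^*\Vert_{\mu^*}^2$ by writing $(x-a)p_n^*(a;x)=p_{n+1}(x)-A_np_n(x)$ and using orthogonality to collapse the integral to $-A_n\Vert p_n\Vert_\mu^2$. Where you diverge is in the derivation of $\beta_n^*$. The paper substitutes the inverse connection formula $p_n(x)=p_n^*(a;x)-\gamma_n\frac{p_{n-1}(a)}{p_n(a)}p_{n-1}^*(a;x)$ (its equation (\ref{pneqKnKn-1})) into $(x-a)p_n^*=p_{n+1}-A_np_n$, which produces the full three-term recurrence in one stroke with $\beta_n^*=a-\gamma_{n+1}\frac{p_n(a)}{p_{n+1}(a)}-A_n$, and then rewrites this via the TTRR of $\{p_n\}$ evaluated at $x=a$ to reach (\ref{coef-RRTTK}). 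You instead invoke Proposition 2 to know a priori that a TTRR holds, and identify $\beta_n^*$ by tracking the subleading coefficient $s_n^*$ of $p_n^*$ through the polynomial division by $x-a$, getting $s_n^*=s_{n+1}+A_n-a$ and hence $\beta_n^*=s_{n+1}^*-s_n^*=\beta_{n+1}+A_{n+1}-A_n$. Your route avoids the TTRR-at-$x=a$ manipulation and is arguably cleaner bookkeeping; the paper's route has the advantage of exhibiting the recurrence explicitly (including $\gamma_n^*$) without presupposing its existence. One small imprecision: your claim that $-A_n>0$ so that $\Vert p_n^*\Vert_{\mu^*}^2>0$ holds only for $a\leq\xi$; for $a\geq\eta$ one has $A_n>0$ and the "norm" $\int (p_n^*)^2(x-a)\,d\mu$ is negative. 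This does not damage the conclusion, since $\gamma_n^*=(A_n/A_{n-1})\gamma_n$ is a ratio of two quantities of the same sign in either case (equivalently, $p_n^*$ is also the MOPS for the positive measure $|x-a|\,d\mu$), but the positivity argument should be phrased through that ratio rather than through positivity of $\Vert p_n^*\Vert_{\mu^*}^2$ itself.
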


\begin{proof}
From (\ref{p1}) and (\ref{pneqKnKn-1}), we have%
\begin{eqnarray*}
(x-a)p_{n}^{\ast }(a;x) &=&p_{n+1}(x)-\frac{p_{n+1}(a)}{p_{n}(a)}p_{n}(x) \\
&=&p_{n+1}^{\ast }(a;x)-\gamma _{n+1}\frac{p_{n}(a)}{p_{n+1}(a)}p_{n}^{\ast
}(a;x) \\
&&\hspace{1.6cm}-\frac{p_{n+1}(a)}{p_{n}(a)}\left( p_{n}^{\ast }(a;x)-\gamma
_{n}\frac{p_{n-1}(a)}{p_{n}(a)}p_{n-1}^{\ast }(a;x)\right) \\
&=&p_{n+1}^{\ast }(a;x)-\left( \gamma _{n+1}\frac{p_{n}(a)}{p_{n+1}(a)}+%
\frac{p_{n+1}(a)}{p_{n}(a)}\right) p_{n}^{\ast }(a;x) \\
&&\hspace{4.3cm}+\gamma _{n}\frac{p_{n+1}(a)p_{n-1}(a)}{\left[ p_{n}(a)%
\right] ^{2}}p_{n-1}^{\ast }(a;x),
\end{eqnarray*}%
or, equivalently,%
\begin{eqnarray*}
xp_{n}^{\ast }(a;x) &=&p_{n+1}^{\ast }(a;x)-\left( \gamma _{n+1}\frac{%
p_{n}(a)}{p_{n+1}(a)}+\frac{p_{n+1}(a)}{p_{n}(a)}-a\right) p_{n}^{\ast }(a;x)
\\
&&\hspace{4.3cm}+\gamma _{n}\frac{p_{n+1}(a)p_{n-1}(a)}{\left[ p_{n}(a)%
\right] ^{2}}p_{n-1}^{\ast }(a;x).
\end{eqnarray*}%
Since%
\begin{equation*}
ap_{n+1}(a)=p_{n+2}(a)+\beta _{n+1}p_{n+1}(a)+\gamma _{n+1}p_{n}(a),
\end{equation*}%
we obtain 
\begin{equation*}
\gamma _{n+1}p_{n}(a)=ap_{n+1}(a)-p_{n+2}(a)-\beta _{n+1}p_{n+1}(a).
\end{equation*}%
Thus,%
\begin{equation*}
\begin{array}{lll}
\beta _{n}^{\ast } & = & \displaystyle-\left( \gamma _{n+1}\frac{p_{n}(a)}{%
p_{n+1}(a)}+\frac{p_{n+1}(a)}{p_{n}(a)}\right) +a\vspace{0.3cm} \\ 
& = & \displaystyle-\left( \frac{ap_{n+1}(a)-p_{n+2}(a)-\beta
_{n+1}p_{n+1}(a)}{p_{n+1}(a)}+\frac{p_{n+1}(a)}{p_{n}(a)}\right) +a\vspace{%
0.3cm} \\ 
& = & \displaystyle-\frac{ap_{n+1}(a)}{p_{n+1}(a)}+\frac{p_{n+2}(a)}{%
p_{n+1}(a)}+\frac{\beta _{n+1}p_{n+1}(a)}{p_{n+1}(a)}-\frac{p_{n+1}(a)}{%
p_{n}(a)}+a\vspace{0.3cm} \\ 
& = & \displaystyle\beta _{n+1}+\frac{p_{n+2}(a)}{p_{n+1}(a)}-\frac{%
p_{n+1}(a)}{p_{n}(a)}.%
\end{array}%
\end{equation*}

For the other coefficient, we have%
\begin{equation}
\gamma _{n}^{\ast }=\dfrac{\Vert p_{n}^{\ast }\Vert _{\mu ^{\ast }}^{2}}{%
\Vert p_{n-1}^{\ast }\Vert _{\mu ^{\ast }}^{2}}=\frac{\left\langle \left(
x-a\right) p_{n}^{\ast },p_{n}^{\ast }\right\rangle _{\mu }}{\left\langle
\left( x-a\right) p_{n-1}^{\ast },p_{n-1}^{\ast }\right\rangle _{\mu }}.
\label{gammastar1}
\end{equation}

But according to (\ref{p1}) 
\begin{equation*}
\left( x-a\right) p_{n}^{\ast }(a;x)=p_{n+1}(x)-\dfrac{p_{n+1}(a)}{p_{n}(a)}%
p_{n}(x),
\end{equation*}

and%
\begin{equation*}
\left\langle \left( x-a\right) p_{n}^{\ast }\left( a;x\right) ,p_{n}^{\ast
}\left( a;x\right) \right\rangle _{\mu }=-\dfrac{p_{n+1}(a)}{p_{n}(a)}%
\left\Vert p_{n}^{\ast }\right\Vert _{\mu }^{2}.
\end{equation*}%
Thus (\ref{gammastar1}) becomes%
\begin{equation*}
\gamma _{n}^{\ast }=\dfrac{p_{n+1}(a)p_{n-1}(a)}{\left[ p_{n}(a)\right] ^{2}}%
\gamma _{n}.
\end{equation*}
\end{proof}

\bigskip

As an example, we can analyze the behavior of these coefficients in the
Laguerre case. Let $\widehat{L}_{n}^{\alpha }\left( x\right) =\frac{\left(
-1\right) ^{n}}{n!}L_{n}^{\alpha }\left( x\right) $, then we have%
\begin{eqnarray*}
\frac{\beta _{n}^{\ast }}{\beta _{n+1}} &=&1-\frac{1}{\beta _{n+1}}\left(
n+2\right) \frac{\widehat{L}_{n+2}^{\alpha }\left( a\right) }{\widehat{L}%
_{n+1}^{\alpha }\left( a\right) }+\frac{1}{\beta _{n+1}}\left( n+1\right) 
\frac{\widehat{L}_{n+1}^{\alpha }\left( a\right) }{\widehat{L}_{n}^{\alpha
}\left( a\right) } \\
&=&1-\frac{n+2}{2n+\alpha +3}\left( 1+\frac{\sqrt{\left\vert a\right\vert }}{%
\sqrt{n+2}}+\mathcal{O}\left( \frac{1}{n+2}\right) \right) \\
&&+\frac{n+1}{2n+\alpha +3}\left( 1+\frac{\sqrt{\left\vert a\right\vert }}{%
\sqrt{n+1}}+\mathcal{O}\left( \frac{1}{n+1}\right) \right) \\
&=&1-\frac{1}{2n+\alpha +3}+\frac{\sqrt{\left\vert a\right\vert }}{2n+\alpha
+3}\left( -\sqrt{n+2}+\sqrt{n+1}\right) +\mathcal{O}\left( \frac{1}{n^{2}}%
\right) \\
&=&1-\frac{1}{2n+\alpha +3}-\frac{\sqrt{\left\vert a\right\vert }}{2n+\alpha
+3}\frac{1}{\sqrt{n+2}+\sqrt{n+1}}+\mathcal{O}\left( \frac{1}{n^{2}}\right)
\\
&=&1-\frac{1}{2n+\alpha +3}-\frac{\sqrt{\left\vert a\right\vert }}{2}%
n^{-3/2}+\mathcal{O}\left( \frac{1}{n^{2}}\right) .
\end{eqnarray*}

Thus%
\begin{eqnarray*}
\frac{\beta _{n}^{\ast }}{\beta _{n}} &=&\frac{2n+\alpha +3}{2n+\alpha +1}%
\left( 1-\frac{1}{2n+\alpha +3}-\frac{\sqrt{\left\vert a\right\vert }}{2}%
n^{-3/2}+\mathcal{O}\left( \frac{1}{n^{2}}\right) \right) \\
&=&\left( 1+\frac{2}{2n+\alpha +1}\right) \left( 1-\frac{1}{2n+\alpha +3}-%
\frac{\sqrt{\left\vert a\right\vert }}{2}n^{-3/2}+\mathcal{O}\left( \frac{1}{%
n^{2}}\right) \right) \\
&=&1+\frac{1}{2n}+\mathcal{O}\left( n^{-3/2}\right) .
\end{eqnarray*}%
\bigskip

On the other hand, taking into account%
\begin{equation*}
\gamma _{n}^{\ast }=\dfrac{p_{n+1}(a)p_{n-1}(a)}{\left[ p_{n}(a)\right] ^{2}}%
\gamma _{n},
\end{equation*}%
then%
\begin{eqnarray*}
\frac{\gamma _{n}^{\ast }}{\gamma _{n}} &=&\frac{\left( n+1\right) }{n}\frac{%
\widehat{L}_{n+1}^{\alpha }\left( a\right) }{\widehat{L}_{n}^{\alpha }\left(
a\right) }\frac{\widehat{L}_{n-1}^{\alpha }\left( a\right) }{\widehat{L}%
_{n}^{\alpha }\left( a\right) } \\
&=&\left( 1+\frac{1}{n}\right) \frac{1+\frac{\sqrt{\left\vert a\right\vert }%
}{\sqrt{n+1}}+\mathcal{O}\left( \frac{1}{n+1}\right) }{1+\frac{\sqrt{%
\left\vert a\right\vert }}{\sqrt{n}}+\mathcal{O}\left( \frac{1}{n}\right) }
\\
&=&\left( 1+\frac{1}{n}\right) \left( 1+\frac{\sqrt{\left\vert a\right\vert }%
}{\sqrt{n+1}}+\mathcal{O}\left( \frac{1}{n+1}\right) \right) \left( 1-\frac{%
\sqrt{\left\vert a\right\vert }}{\sqrt{n}}+\mathcal{O}\left( \frac{1}{n}%
\right) \right) \\
&=&\left( 1+\frac{1}{n}\right) \left( 1-\frac{\sqrt{\left\vert a\right\vert }%
}{2n^{3/2}}+\mathcal{O}\left( \frac{1}{n^{2}}\right) \right) \\
&=&1+\frac{1}{n}+\mathcal{O}\left( n^{-3/2}\right) .
\end{eqnarray*}

\bigskip

Next, we will assume that $d\mu ^{\ast }(x)=\left( x-a\right) \omega \left(
x\right) dx$ where $\omega (x)$ is a weight function supported on the real
line. We can associate with $\omega \left( x\right) $ an external potential $%
\upsilon \left( x\right) $ such that $\omega \left( x\right) =\exp \left(
-\upsilon \left( x\right) \right) $.

Notice that if $\upsilon(x)$ is assumed to be differentiable in the support
of $d\mu(x)=\omega \left( x\right) dx$ then 
\begin{equation*}
\frac{\omega ^{\prime }\left( x\right) }{\omega \left( x\right) }=-\upsilon
^{\prime }\left( x\right) .
\end{equation*}

If $\upsilon ^{\prime }(x)$ is a rational function, then the weight function 
$\omega (x)$ is said to be semi-classical (see \cite{Mar91}, \cite{Sho39}).
The linear functional $u$ associated with $\omega (x)$, i.e., 
\begin{equation*}
\left\langle u,p(x)\right\rangle =\int\limits_{\Sigma }p(x)\omega \left(
x\right) dx,
\end{equation*}%
satisfies a distributional equation (which is known in the literature as
Pearson equation)%
\begin{equation*}
D(\sigma (x)u)=\tau (x)u,
\end{equation*}%
where $\sigma (x)$ and $\tau (x)$ are polynomials such that $\sigma (x)$ is
monic and deg$(\tau (x))\geq 1$.

Notice that, in terms of the weight function, the above relation means that%
\begin{equation*}
\frac{\omega ^{\prime }(x)}{\omega (x)}=\frac{\tau (x)-\sigma ^{\prime }(x)}{%
\sigma (x)},
\end{equation*}%
or, equivalently, 
\begin{equation*}
\upsilon ^{\prime }(x)=-\frac{\tau (x)-\sigma ^{\prime }(x)}{\sigma (x)}.
\end{equation*}

Let consider the linear functional $u^{\ast }$ associated with the measure $%
\mu ^{\ast }(x)$. In order to find the Pearson equation that $u^{\ast }$
satisfies we will analyze two situations.

\begin{itemize}
\item[($i$)] If $\sigma (a)\neq 0,$ then 
\begin{equation*}
\begin{array}{lll}
D\left( (x-a)\sigma (x)u^{\ast }\right) & = & D\left(
(x-a)^{2}\sigma(x)u\right) \vspace{0.2cm} \\ 
& = & 2(x-a)\sigma (x)u+(x-a)^{2}D(\sigma (x)u) \vspace{0.2cm} \\ 
& = & 2\sigma (x)u^{\ast }+(x-a)^{2}\tau (x)u \vspace{0.2cm} \\ 
& = & \left[ 2\sigma (x)+(x-a)\tau (x)\right] u^{\ast }.%
\end{array}%
\end{equation*}
Thus, 
\begin{eqnarray*}
D\left(\phi(x)u^{\ast}\right)=\psi(x)u^{\ast},
\end{eqnarray*}
where 
\begin{equation}  \label{Pea1}
\left|%
\begin{array}{l}
\phi (x) =(x-a)\sigma (x) \vspace{0.2cm} \\ 
\psi (x) =2\sigma (x)+(x-a)\tau (x).%
\end{array}%
\right.
\end{equation}

\item[($ii$)] If $\sigma(a)=0$, i.e., $\sigma (x)=(x-a)\tilde{\sigma}(x),$
then 
\begin{equation*}
\begin{array}{lll}
D\left( \sigma (x)u^{\ast }\right) & = & D\left( (x-a)\tilde{\sigma}%
(x)u^{\ast}\right) \vspace{0.2cm} \\ 
& = & D\left( (x-a)^{2}\tilde{\sigma}(x)u\right) =D\left(
(x-a)\sigma(x)u\right) \vspace{0.2cm} \\ 
& = & \sigma (x)u+(x-a)D\left( \sigma (x)u\right) =\sigma (x)u+(x-a)\tau
(x)u \vspace{0.2cm} \\ 
& = & \left( \tilde{\sigma}(x)+\tau (x)\right) u^{\ast }.%
\end{array}%
\end{equation*}
In this case, 
\begin{eqnarray*}
D\left(\phi(x)u^{\ast}\right)=\psi(x)u^{\ast},
\end{eqnarray*}
with 
\begin{equation}  \label{Pea2}
\left|%
\begin{array}{l}
\phi (x) =\sigma (x) \vspace{0.2cm} \\ 
\psi (x) =\tilde{\sigma}(x)+\tau (x).%
\end{array}%
\right.
\end{equation}
\end{itemize}

It is a very well known result that the sequence of monic polynomials $%
\{p_{n}^{\ast }(a;x)\}_{n\geq 0},$ orthogonal with respect to $u^{\ast }$%
\thinspace\ satisfies a structure relation (see {\cite{Dini Maroni90}} and 
\cite{Mar91})%
\begin{equation}
\phi (x)D\left( p_{n}^{\ast }(a;x)\right) =A(x,n)p_{n}^{\ast
}(a;x)+B(x,n)p_{n-1}^{\ast }(a;x)  \label{Struct-Rel}
\end{equation}%
where $A(x,n)$ and $B(x,n)$ are polynomials of fixed degree, that do not
depend on $n$.

\setcounter{lem}{1}

\begin{lem}
\cite{Ism00-A}\ We have 
\begin{equation}
\displaystyle A(x,n)+A(x,n-1)+\frac{(x-\beta _{n-1}^{\ast })}{\gamma
_{n-1}^{\ast }}B(x,n-1)=\phi ^{\prime }(x)-\psi (x).  \label{lema22}
\end{equation}
\end{lem}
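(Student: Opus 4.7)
The plan is to prove (\ref{lema22}) in two stages. Let
\[
\Lambda(x,n) := A(x,n) + A(x,n-1) + \frac{(x - \beta_{n-1}^{\ast})}{\gamma_{n-1}^{\ast}}\, B(x,n-1).
\]
First I would show that $\Lambda(x,n)$ is independent of $n$; second, I would identify the common $n$-independent polynomial with $\phi'(x)-\psi(x)$.

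For Stage 1, I differentiate the three-term recurrence (\ref{TTRRK}) to obtain
$p_{n+1}^{\ast\prime} = p_n^{\ast} + (x-\beta_n^{\ast})p_n^{\ast\prime} - \gamma_n^{\ast}\, p_{n-1}^{\ast\prime}$, multiply by $\phi(x)$, and substitute the structure relation (\ref{Struct-Rel}) into each of $\phi p_{n+1}^{\ast\prime}$, $\phi p_n^{\ast\prime}$, $\phi p_{n-1}^{\ast\prime}$. Using (\ref{TTRRK}) once more to write $p_{n+1}^{\ast}$ and $p_{n-2}^{\ast} = [(x-\beta_{n-1}^{\ast})p_{n-1}^{\ast} - p_n^{\ast}]/\gamma_{n-1}^{\ast}$ in the basis $\{p_n^{\ast}, p_{n-1}^{\ast}\}$, I equate the coefficient of $p_{n-1}^{\ast}$ on both sides. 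Dividing by $-\gamma_n^{\ast}$ yields the telescoping relation
\[
A(x,n+1) + \frac{(x-\beta_n^{\ast})\,B(x,n)}{\gamma_n^{\ast}} = A(x,n-1) + \frac{(x-\beta_{n-1}^{\ast})\,B(x,n-1)}{\gamma_{n-1}^{\ast}},
\]
which is readily seen to be equivalent to $\Lambda(x,n+1) = \Lambda(x,n)$, so $\Lambda$ is independent of $n$.

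For Stage 2, I multiply the structure relation at index $n$ by $p_{n-1}^{\ast}$ and that at index $n-1$ by $p_n^{\ast}$, add them, and apply the TTRR to $p_{n-2}^{\ast}$. This yields the polynomial identity
\[
\phi(x)\bigl(p_n^{\ast}(a;x)\,p_{n-1}^{\ast}(a;x)\bigr)' = \Lambda(x,n)\, p_n^{\ast} p_{n-1}^{\ast} + B(x,n)\,(p_{n-1}^{\ast})^{2} - \frac{B(x,n-1)}{\gamma_{n-1}^{\ast}}\,(p_n^{\ast})^{2}.
\]
I then invoke the Pearson equation (\ref{Pea1}) (or (\ref{Pea2})) in the form $\int[\phi q' + \psi q]\,d\mu^{\ast} = 0$ for every polynomial $q$. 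Taking $q = p_n^{\ast} p_{n-1}^{\ast}$, the integrated LHS becomes $-\int \psi p_n^{\ast} p_{n-1}^{\ast}\, d\mu^{\ast}$. Combined with the $n$-independence of $\Lambda(x)$ and the known variation of $A(x,n), B(x,n)$ with $n$, matching the polynomial coefficients in the resulting scalar identities forces $\Lambda(x) = \phi'(x) - \psi(x)$.

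The hard part will be Stage 2, because Pearson delivers only a scalar identity per $n$, whereas the lemma asserts a polynomial identity in $x$. The cleanest completion uses the $n$-independence from Stage 1 to reduce the identification to a single convenient base case (for instance $n = 2$, where $A(x,1)$ and $B(x,1)$ are obtained by the explicit polynomial division $\phi(x) = A(x,1)(x-\beta_0^{\ast}) + B(x,1)$ read off from the structure relation at $n = 1$, and $A(x,2), B(x,2)$ similarly from $n = 2$); the Pearson equation then pins down $\Lambda(x,2) = \phi'(x) - \psi(x)$ by a short finite-dimensional coefficient computation.
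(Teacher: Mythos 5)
Your Stage 1 is sound: differentiating the recurrence (\ref{TTRRK}), multiplying by $\phi$, inserting (\ref{Struct-Rel}), and reducing everything to the basis $\{p_n^{\ast},p_{n-1}^{\ast}\}$ does yield the telescoping relation you state, hence the $n$-independence of $\Lambda(x,n)$ (you should add the standard remark that ``equating coefficients'' of $p_n^{\ast}$ and $p_{n-1}^{\ast}$ is legitimate because consecutive orthogonal polynomials have no common zeros while the coefficient polynomials have degree bounded independently of $n$). Note that this is already more than the paper does: its proof of the lemma consists of quoting Ismail's identity (1.12) from \cite{Ism00-A} for the weight $\omega^{\ast}(x)=(x-a)\omega(x)$, which gives the left-hand side as $-\phi(x)\,\omega^{\ast\prime}(x)/\omega^{\ast}(x)$, and then rewriting this as $\phi'(x)-\psi(x)$ via the Pearson equation.

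The genuine gap is in Stage 2, which you yourself flag as the hard part. The polynomial identity $\phi\,(p_n^{\ast}p_{n-1}^{\ast})'=\Lambda\,p_n^{\ast}p_{n-1}^{\ast}+B(x,n)(p_{n-1}^{\ast})^{2}-(\gamma_{n-1}^{\ast})^{-1}B(x,n-1)(p_n^{\ast})^{2}$ is correct, but integrating it against $d\mu^{\ast}$ with the Pearson equation produces a single scalar equation per $n$, whose ingredients ($\int B(x,n)(p_{n-1}^{\ast})^{2}\,d\mu^{\ast}$, mixed moments of $p_n^{\ast}p_{n-1}^{\ast}$) are themselves unknown; ``matching the polynomial coefficients'' is asserted, not performed. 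The proposed rescue via the base case $n=2$ does not work as stated: the relation $\phi(x)=A(x,1)(x-\beta_0^{\ast})+B(x,1)$ does not determine the pair $(A(x,1),B(x,1))$ --- any $B(x,1)$ with $B(\beta_0^{\ast},1)=\phi(\beta_0^{\ast})$ is admissible --- so you cannot ``read off'' from the structure relation alone at low index the particular coefficients that actually occur in (\ref{lema22}). To close the argument you need the extra input that fixes $A(x,n)$ and $B(x,n)$, namely Ismail's integral representations of these coefficients (equivalently, the explicit construction of the structure relation from the Pearson equation for $u^{\ast}$); once those are in hand, (\ref{lema22}) follows directly from a Christoffel--Darboux telescoping inside the integrals and your two-stage scaffolding becomes unnecessary. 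As written, the identification $\Lambda(x)=\phi'(x)-\psi(x)$ is not established.
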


\begin{proof}
According to a result by Ismail (\cite{Ism00-A}, (1.12)) which must be
adapted to our situation since we use monic polynomials, we get%
\begin{equation*}
\begin{array}{lll}
\displaystyle A(x,n)+A(x,n-1)+\frac{(x-\beta _{n-1}^{\ast })}{\gamma
_{n-1}^{\ast }}B(x,n-1) & = & \displaystyle-\phi (x)\frac{\left[ \omega
^{\ast }(x)\right] ^{\prime }}{\omega ^{\ast }(x)}\vspace{0.2cm} \\ 
& = & \displaystyle-\phi (x)\frac{\psi (x)-\phi ^{\prime }(x)}{\phi (x)}%
\vspace{0.2cm} \\ 
& = & \displaystyle\phi ^{\prime }(x)-\psi (x),%
\end{array}%
\end{equation*}%
where $\omega ^{\ast }(x)=(x-a)\omega (x)$.
\end{proof}

Now, applying the derivative operator in (\ref{02nd-Rel}) and multiplying it
by $\phi (x)$, we obtain 
\begin{equation}
\phi (x)D\left( p_{n}^{N}(a;x)\right) =\phi (x)D\left( p_{n}^{\ast
}(a;x)\right) +c_{n}\phi (x)D\left( p_{n-1}^{\ast }(a;x)\right) .
\label{Struct-Rel2}
\end{equation}%
Thus, substituting (\ref{Struct-Rel}) in (\ref{Struct-Rel2}), yields 
\begin{equation}
\begin{array}{l}
\phi (x)D\left( p_{n}^{N}(a;x)\right) =A(x,n)p_{n}^{\ast }(a;x)+\left[
B(x,n)+c_{n}A(x,n-1)\right] p_{n-1}^{\ast }(a;x)\vspace{0.3cm} \\ 
\hspace{6.8cm}+c_{n}B(x,n-1)p_{n-2}^{\ast }(a;x).%
\end{array}
\label{Struct-Rel3}
\end{equation}%
Finally, using the TTRR (\ref{TTRRK}) in (\ref{Struct-Rel3}), we obtain 
\begin{equation}
\phi (x)\left( p_{n}^{N}(a;x)\right) ^{\prime }=A^{\ast }(x,n)p_{n}^{\ast
}(a;x)+B^{\ast }(x,n)p_{n-1}^{\ast }(a;x),  \label{3rd-Rel}
\end{equation}%
where 
\begin{equation}
A^{\ast }(x,n)=\left( A(x,n)-\dfrac{c_{n}}{\gamma _{n-1}^{\ast }}%
B(x,n-1)\right)  \label{Coeff_4}
\end{equation}%
and 
\begin{equation}
B^{\ast }(x,n)=\left( B(x,n)+c_{n}A(x,n-1)+\dfrac{c_{n}}{\gamma _{n-1}^{\ast
}}(x-\beta _{n-1}^{\ast })B(x,n-1)\right) .  \label{Coeff_5}
\end{equation}%
Therefore, from (\ref{02nd-Rel}) and (\ref{3rd-Rel}), it follows that 
\begin{equation*}
\left[ 
\begin{array}{cc}
1 & c_{n}\vspace{0.3cm} \\ 
A^{\ast }(x,n) & B^{\ast }(x,n)%
\end{array}%
\right] \left[ 
\begin{array}{c}
p_{n}^{\ast }(a;x)\vspace{0.3cm} \\ 
p_{n-1}^{\ast }(a;x)%
\end{array}%
\right] =\left[ 
\begin{array}{c}
p_{n}^{N}(a;x)\vspace{0.3cm} \\ 
\phi (x)D\left( p_{n}^{N}(a;x)\right)%
\end{array}%
\right] ,
\end{equation*}%
that is, 
\begin{equation*}
p_{n}^{\ast }(a;x)=\frac{%
\begin{vmatrix}
p_{n}^{N}(a;x) & c_{n}\vspace{0.3cm} \\ 
\phi (x)D\left( p_{n}^{N}(a;x)\right) & B^{\ast }(x,n)%
\end{vmatrix}%
}{%
\begin{vmatrix}
1 & c_{n}\vspace{0.3cm} \\ 
A^{\ast }(x,n) & B^{\ast }(x,n)%
\end{vmatrix}%
}
\end{equation*}%
and 
\begin{equation*}
p_{n-1}^{\ast }(a;x)=\frac{%
\begin{vmatrix}
1 & p_{n}^{N}(a;x)\vspace{0.3cm} \\ 
A^{\ast }(x,n) & \phi (x)D\left( p_{n}^{N}(a;x)\right)%
\end{vmatrix}%
}{%
\begin{vmatrix}
1 & c_{n}\vspace{0.3cm} \\ 
A^{\ast }(x,n) & B^{\ast }(x,n)%
\end{vmatrix}%
},
\end{equation*}%
or, equivalently, 
\begin{equation}
p_{n}^{\ast }(a;x)=\displaystyle\frac{B^{\ast }(x,n)}{B^{\ast
}(x,n)-c_{n}A^{\ast }(x,n)}\,p_{n}^{N}(a;x)-\frac{c_{n}\phi (x)}{B^{\ast
}(x,n)-c_{n}A^{\ast }(x,n)}\,D\left( p_{n}^{N}(a;x)\right)  \label{MonKer-n}
\end{equation}%
and 
\begin{equation}
p_{n-1}^{\ast }(a;x)=\displaystyle\frac{-A^{\ast }(x,n)}{B^{\ast
}(x,n)-c_{n}A^{\ast }(x,n)}p_{n}^{N}(a;x)+\frac{\phi (x)}{B^{\ast
}(x,n)-c_{n}A^{\ast }(x,n)}D\left( p_{n}^{N}(a;x)\right) .
\label{MonKer-n-1}
\end{equation}

Now, substituting (\ref{MonKer-n}) and (\ref{MonKer-n-1}) in (\ref%
{Struct-Rel}), we deduce 
\begin{equation*}
\begin{array}{l}
\phi (x)D\left( \displaystyle\frac{B^{\ast }(x,n)}{B^{\ast
}(x,n)-c_{n}A^{\ast }(x,n)}\,p_{n}^{N}(a;x)-\frac{c_{n}\phi (x)}{B^{\ast
}(x,n)-c_{n}A^{\ast }(x,n)}\,D\left( p_{n}^{N}(a;x)\right) \right) \vspace{%
0.3cm} \\ 
=A(x,n)\left( \displaystyle\frac{B^{\ast }(x,n)}{B^{\ast }(x,n)-c_{n}A^{\ast
}(x,n)}\,p_{n}^{N}(a;x)-\frac{c_{n}\phi (x)}{B^{\ast }(x,n)-c_{n}A^{\ast
}(x,n)}\,D\left( p_{n}^{N}(a;x)\right) \right) \vspace{0.3cm} \\ 
+B(x,n)\left( \displaystyle\frac{-A^{\ast }(x,n)}{B^{\ast
}(x,n)-c_{n}A^{\ast }(x,n)}p_{n}^{N}(a;x)+\frac{\phi (x)}{B^{\ast
}(x,n)-c_{n}A^{\ast }(x,n)}D\left( p_{n}^{N}(a;x)\right) \right) .%
\end{array}%
\end{equation*}%
Then a straightforward calculation yields

\setcounter{thm}{10}

\begin{thm}
The MOPS $\{p_{n}^{N}(a;x)\}_{n\geq 0}$ satisfies the second order linear
differential equation 
\begin{equation}
\mathcal{A}(x;n)(p_{n}^{N}(a;x))^{\prime \prime }+\mathcal{B}%
(x;n)(p_{n}^{N}(a;x))^{\prime }+\mathcal{C}(x;n)p_{n}^{N}(a;x)=0,
\label{Holon-Equ-Q-1}
\end{equation}%
where%
\begin{equation*}
\begin{array}{lll}
\mathcal{A}(x;n) & = & \displaystyle\frac{c_{n}\left[ \phi (x)\right] ^{2}}{%
B^{\ast }(x,n)-c_{n}A^{\ast }(x,n)},\vspace{0.3cm} \\ 
\mathcal{B}(x;n) & = & \displaystyle\frac{\phi (x)\left[ B(x,n)-B^{\ast
}(x,n)+c_{n}(\phi ^{\prime }(x)-A(x,n))\right] }{B^{\ast }(x,n)-c_{n}A^{\ast
}(x,n)}\vspace{0.3cm} \\ 
&  & \displaystyle\hspace{2.95cm}-\frac{c_{n}\phi (x)^{2}\left( B^{\ast
}(x,n)-c_{n}A^{\ast }(x,n)\right) ^{\prime }}{\left( B^{\ast
}(x,n)-c_{n}A^{\ast }(x,n)\right) ^{2}}\vspace{0.3cm} \\ 
\mathcal{C}(x;n) & = & \displaystyle\frac{A(x,n)B^{\ast }(x,n)-B(x,n)A^{\ast
}(x,n)}{B^{\ast }(x,n)-c_{n}A^{\ast }(x,n)}-\phi (x)D\left( \frac{B^{\ast
}(x,n)}{B^{\ast }(x,n)-c_{n}A^{\ast }(x,n)}\right) .%
\end{array}%
\end{equation*}
\end{thm}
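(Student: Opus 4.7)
The plan is to treat the problem as a pure bookkeeping exercise: the preceding development has already produced a linear system expressing $p_{n}^{\ast}(a;x)$ and $p_{n-1}^{\ast}(a;x)$ as combinations of $p_{n}^{N}(a;x)$ and $(p_{n}^{N}(a;x))^{\prime}$ (formulas (\ref{MonKer-n}) and (\ref{MonKer-n-1})), so the remaining task is to substitute these into the structure relation (\ref{Struct-Rel}) satisfied by the kernel polynomials $p_{n}^{\ast}(a;x)$ and differentiate once more to produce the second derivative of $p_{n}^{N}(a;x)$.

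First, I would apply the operator $\phi(x)D$ directly to (\ref{MonKer-n}). Since $c_{n}$ is independent of $x$, the product rule gives
\begin{equation*}
\phi(x)\,D\bigl(p_{n}^{\ast}(a;x)\bigr)=\phi(x)\,D\!\left(\tfrac{B^{\ast}}{B^{\ast}-c_{n}A^{\ast}}\right)p_{n}^{N}+\tfrac{\phi\,B^{\ast}}{B^{\ast}-c_{n}A^{\ast}}(p_{n}^{N})^{\prime}-\phi\,D\!\left(\tfrac{c_{n}\phi}{B^{\ast}-c_{n}A^{\ast}}\right)(p_{n}^{N})^{\prime}-\tfrac{c_{n}\phi^{2}}{B^{\ast}-c_{n}A^{\ast}}(p_{n}^{N})^{\prime\prime}.
\end{equation*}
Next, I would evaluate the right-hand side of (\ref{Struct-Rel}) using (\ref{MonKer-n}) and (\ref{MonKer-n-1}), which after grouping yields
\begin{equation*}
A(x,n)\,p_{n}^{\ast}(a;x)+B(x,n)\,p_{n-1}^{\ast}(a;x)=\tfrac{A B^{\ast}-B A^{\ast}}{B^{\ast}-c_{n}A^{\ast}}\,p_{n}^{N}(a;x)+\tfrac{\phi\,(B-c_{n}A)}{B^{\ast}-c_{n}A^{\ast}}\,(p_{n}^{N}(a;x))^{\prime}.
\end{equation*}

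Equating these two expressions for $\phi(x)D(p_{n}^{\ast}(a;x))$ and moving everything to one side produces an identity of the form $\mathcal{A}(x;n)(p_{n}^{N})^{\prime\prime}+\mathcal{B}(x;n)(p_{n}^{N})^{\prime}+\mathcal{C}(x;n)\,p_{n}^{N}=0$. Reading off the coefficient of $(p_{n}^{N})^{\prime\prime}$ gives $\mathcal{A}(x;n)=c_{n}\phi(x)^{2}/(B^{\ast}-c_{n}A^{\ast})$ immediately. For $\mathcal{B}(x;n)$, I would expand $\phi\,D\bigl(c_{n}\phi/(B^{\ast}-c_{n}A^{\ast})\bigr)$ by the quotient rule into $c_{n}\phi\phi^{\prime}/(B^{\ast}-c_{n}A^{\ast})-c_{n}\phi^{2}(B^{\ast}-c_{n}A^{\ast})^{\prime}/(B^{\ast}-c_{n}A^{\ast})^{2}$, combine the rational parts over the common denominator $B^{\ast}-c_{n}A^{\ast}$, and simplify the numerator to $\phi\bigl[B-B^{\ast}+c_{n}(\phi^{\prime}-A)\bigr]$, leaving the second term as stated. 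For $\mathcal{C}(x;n)$, the coefficient of $p_{n}^{N}$ is already the sum of a rational function and $-\phi\,D(B^{\ast}/(B^{\ast}-c_{n}A^{\ast}))$, matching the claim.

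I do not expect a genuine obstacle: the only mildly delicate point is keeping signs straight when moving from the original equality of $\phi(p_{n}^{\ast})^{\prime}$ with $A\,p_{n}^{\ast}+B\,p_{n-1}^{\ast}$ to the homogeneous form, and reorganising the derivative $D\bigl(c_{n}\phi/(B^{\ast}-c_{n}A^{\ast})\bigr)$ so that the coefficient $\mathcal{B}(x;n)$ takes the displayed factorisation. Everything else is linear algebra in $\{p_{n}^{N}(a;x),(p_{n}^{N}(a;x))^{\prime},(p_{n}^{N}(a;x))^{\prime\prime}\}$. Note also that the denominator $B^{\ast}(x,n)-c_{n}A^{\ast}(x,n)$ is nonzero as a polynomial, because $\{1,c_{n};A^{\ast}(x,n),B^{\ast}(x,n)\}$ is the coefficient matrix of a linear system whose right-hand sides $(p_{n}^{N},\phi(p_{n}^{N})^{\prime})$ generically span a two-dimensional space, so the division carried out in (\ref{MonKer-n})--(\ref{MonKer-n-1}) is legitimate and the coefficients $\mathcal{A},\mathcal{B},\mathcal{C}$ are well-defined rational functions.
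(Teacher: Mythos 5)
Your proposal is correct and follows exactly the route the paper takes: substitute the inverted relations (\ref{MonKer-n}) and (\ref{MonKer-n-1}) into the structure relation (\ref{Struct-Rel}), apply $\phi(x)D$ to the expression for $p_n^{\ast}(a;x)$ to generate the second derivative, and collect coefficients of $p_n^N$, $(p_n^N)'$, $(p_n^N)''$ over the common denominator $B^{\ast}(x,n)-c_nA^{\ast}(x,n)$. The paper labels this step "a straightforward calculation"; your write-up simply carries it out explicitly, and your reading off of $\mathcal{A}$, $\mathcal{B}$, $\mathcal{C}$ matches the stated coefficients.
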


A different approach to this differential equation appears in \cite%
{MarcMar92} using the fact that the Uvarov transform of a semiclassical
linear functional is again a semiclassical linear functional.

It is important to notice that for the electrostatic interpretation of the
zeros is enough to consider the polynomial coefficients of $%
(p_{n}^{N}(a;x))^{\prime\prime }$ and $(p_{n}^{N}(a;x))^{\prime }$. In fact,
it will come from the ratio 
\begin{equation*}
\frac{(p_{n}^{N}(a;x))^{\prime \prime }}{(p_{n}^{N}(a;x))^{\prime }}=-\frac{%
\mathcal{B}(x;n)}{\mathcal{A}(x;n)}
\end{equation*}
evaluated at the zeros of $p_{n}^{N}(a;x)$.

Let $(x_{n,k}^{N})$ be the zeros of $p_{n}^{N}(a;x)$. If we evaluate the
second-order linear differential equation (\ref{Holon-Equ-Q-1}) at $%
x_{n,k}^{N}$ then we obtain 
\begin{equation*}
\mathcal{A}(x_{n,k}^{N};n)(p_{n}^{N}(a;x_{n,k}^{N}))^{\prime \prime }+%
\mathcal{B}(x_{n,k}^{N};n)(p_{n}^{N}(a;x_{n,k}^{N}))^{\prime }=0.
\end{equation*}%
when $x_{n,k}^{N}$ is a zero of the polynomial $p_{n}^{N}(a;x)$.

Hence,%
\begin{equation}
\frac{(p_{n}^{N}(a;x_{n,k}^{N}))^{\prime \prime }}{%
(p_{n}^{N}(a;x_{n,k}^{N}))^{\prime }}=-\frac{\mathcal{B}(x_{n,k}^{N};n)}{%
\mathcal{A}(x_{n,k}^{N};n)}.  \label{electr_04}
\end{equation}

Substituting $\mathcal{A}(x_{n,k}^{N};n)$ and $\mathcal{B}(x_{n,k}^{N};n)$
in the right hand side of (\ref{electr_04}), we get%
\begin{eqnarray*}
&&\frac{(p_{n}^{N}(a;x_{n,k}^{N}))^{\prime \prime }}{%
(p_{n}^{N}(a;x_{n,k}^{N}))^{\prime }} \\
&=&\frac{(B^{\ast }(x_{n,k}^{N},n)-c_{n}A^{\ast }(x_{n,k}^{N},n))^{\prime }}{%
B^{\ast }(x_{n,k}^{N},n)-c_{n}A^{\ast }(x_{n,k}^{N},n)}+\frac{B^{\ast
}(x_{n,k}^{N},n)-B(x_{n,k}^{N},n)+c_{n}A(x_{n,k}^{N},n)-c_{n}\phi ^{\prime
}(x_{n,k}^{N})}{c_{n}\phi (x_{n,k}^{N})}.
\end{eqnarray*}

If we denote $Q(x):=B^{\ast }(x,n)-c_{n}A^{\ast }(x,n)$ and using (\ref%
{lema22}), (\ref{Coeff_4}), and (\ref{Coeff_5}), then we have%
\begin{equation*}
\begin{array}{lll}
Q(x) & = & \displaystyle B^{\ast }(x,n)-c_{n}A^{\ast }(x,n)\vspace{0.3cm} \\ 
& = & \displaystyle B(x,n)+c_{n}\left[ A(x,n-1)+\frac{x-\beta _{n-1}^{\ast }%
}{\gamma _{n-1}^{\ast }}B(x,n-1)-A(x,n)+\frac{c_{n}}{\gamma _{n-1}^{\ast }}%
B(x,n-1)\right] \vspace{0.3cm} \\ 
& = & \displaystyle B(x,n)+c_{n}\left[ -2A(x,n)+\left( A(x,n)+A(x,n-1)+\frac{%
(x-\beta _{n-1}^{\ast })}{\gamma _{n-1}^{\ast }}B(x,n-1)\right) \right. 
\vspace{0.3cm} \\ 
&  & \hspace{6.7cm}\left. +\displaystyle\frac{c_{n}}{\gamma _{n-1}^{\ast }}%
B(x,n-1)\right] \vspace{0.3cm} \\ 
& = & \displaystyle B(x,n)+c_{n}\left[ -2A(x,n)+\phi ^{\prime }(x)-\psi (x)+%
\frac{c_{n}}{\gamma _{n-1}^{\ast }}B(x,n-1)\right] ,%
\end{array}%
\end{equation*}%
i.e., 
\begin{equation}
Q(x)=\displaystyle B(x,n)+c_{n}\left[ -2A(x,n)+\phi ^{\prime }(x)-\psi (x)+%
\frac{c_{n}}{\gamma _{n-1}^{\ast }}B(x,n-1)\right] .  \label{Qnx}
\end{equation}

On the other hand, from (\ref{Qnx}) and (\ref{Coeff_4}), we obtain 
\begin{equation*}
\begin{array}{l}
B^{\ast }(x,n)-B(x,n)+c_{n}A(x,n)-c_{n}\phi ^{\prime }(x)\vspace{0.3cm} \\ 
=Q(x)+c_{n}A^{\ast }(x,n)-B(x,n)+c_{n}A(x,n)-c_{n}\phi ^{\prime }(x)\vspace{%
0.3cm} \\ 
=\displaystyle B(x,n)+c_{n}\left[ -2A(x,n)+\phi ^{\prime }(x)-\psi (x)+\frac{%
c_{n}}{\gamma _{n-1}^{\ast }}B(x,n-1)\right] \vspace{0.3cm} \\ 
+c_{n}\left( A(x,n)-\dfrac{c_{n}}{\gamma _{n-1}^{\ast }}B(x,n-1)\right)
-B(x,n)+c_{n}A(x,n)-c_{n}\phi ^{\prime }(x)\vspace{0.3cm} \\ 
=-c_{n}\psi (x).%
\end{array}%
\end{equation*}

Thus 
\begin{equation}
\displaystyle\frac{(p_{n}^{N}(a;x_{n,k}^{N}))^{\prime \prime }}{%
(p_{n}^{N}(a;x_{n,k}^{N}))^{\prime }}=D\left[ \ln Q(x)\right]
|_{x=x_{n,k}^{N}}-\frac{\psi (x_{n,k}^{N})}{\phi (x_{n,k}^{N})}.  \label{BA}
\end{equation}

We consider two external fields 
\begin{equation*}
-\int \frac{\psi (x)}{\phi (x)}dx\ \ \ \mbox{and}\ \ \ \ln Q(x),
\end{equation*}%
in such a way that the total external potential $V(x)$ is given by%
\begin{equation}
V(x)=-\int \frac{\psi (x)}{\phi (x)}dx+\ln Q(x).  \label{V}
\end{equation}

Let introduce a system of $n$ movable unit charges in $[a,\eta ]$ or $[\xi
,a] $, depending on the location of the point $a$ with respect to $%
C_{0}(\Sigma )=[\xi ,\eta ]$, in the presence of the external potential $%
V(x) $ of (\ref{V}). Let 
\begin{equation*}
x:=(x_{1},\ldots ,x_{n}),
\end{equation*}%
where $x_{1},\ldots ,x_{n}$ denote the positions of the particles. The total
energy of the system is%
\begin{equation}
E(x)=\sum_{k=1}^{n}V(x_{k})-2\sum_{1\leq j<k\leq n}\ln |x_{j}-x_{k}|.
\label{E}
\end{equation}

Let 
\begin{equation}
T(x):=\exp (-E(x))=\left[ \prod_{j=1}^{n}\frac{\exp (-\int \frac{\psi (x_{j})%
}{\phi (x_{j})}dx)}{Q(x_{j})}\right] \prod_{1\leq j<k\leq
n}(x_{j}-x_{k})^{2}.  \label{T}
\end{equation}

In order to find the critical points of $E(x)$ we will analyze the gradient
of $\ln T(x)$. Indeed, 
\begin{equation*}
\frac{\partial }{\partial x_{j}}\ln T(x)=0,\ \ j=1,\ldots ,n,
\end{equation*}%
i.e, 
\begin{equation}
-\frac{\partial }{\partial x_{j}}E(x)=0\Leftrightarrow \frac{\psi (x_{j})%
}{\phi (x_{j})}-\frac{Q^{\prime }(x_{j})}{Q(x_{j})}+2\sum_{1\leq k\leq
n,k\neq j}\frac{1}{x_{j}-x_{k}}=0,\ \ j=1,\ldots ,n.  \label{GT}
\end{equation}

Let 
\begin{equation*}
f(y):=(y-x_{1})\cdots (y-x_{n}).
\end{equation*}

Thus, 
\begin{equation*}
\frac{\psi (x_{j})}{\phi (x_{j})}-\frac{Q^{\prime }(x_{j})}{Q(x_{j})}+\frac{%
f^{\prime \prime }(x_{j})}{f^{\prime }(x_{j})}=0,\ \ j=1,\ldots ,n,
\end{equation*}%
or, equivalently, 
\begin{equation*}
f^{\prime \prime }(y)+\frac{\mathcal{B}(y;n)}{\mathcal{A}(y;n)}f^{\prime
}(y)=0,\ \ \ y=x_{1},\ldots ,x_{n}.
\end{equation*}

Therefore 
\begin{equation}
f^{\prime \prime }(y)+\frac{\mathcal{B}(y;n)}{\mathcal{A}(y;n)}f^{\prime
}(y)+\frac{\mathcal{C}(y;n)}{\mathcal{A}(y;n)}f(y)=0,\ \ \ y=x_{1},\ldots
,x_{n}.  \label{ed}
\end{equation}%
On the other hand, from (\ref{Holon-Equ-Q-1}) and (\ref{ed}) we get 
\begin{equation*}
f(y)= p_{n}^{N}(a;y),
\end{equation*}
and then the zeros of $p_{n}^{N}(a;y)$ satisfy (\ref{GT}).

\bigskip


\subsection{Electrostatic interpretation of the zeros of Laguerre type
orthogonal polynomials}

Firstly, we shall enumerate some useful basic properties of the Laguerre
classical monic polynomials $L_{n}^{\alpha}(x)$.

\begin{enumerate}
\item[i.] Let $u$ be the linear funcional 
\begin{equation*}
\langle u,p\rangle =\displaystyle\int_{0}^{+\infty }p(x)x^{\alpha
}e^{-x}dx,\quad \alpha >-1,\,\,p\in \mathbb{P}.
\end{equation*}%
So $u$ satisfies the Pearson differential equation 
\begin{equation*}
D(\sigma (x)u)=\tau (x)u,
\end{equation*}%
where 
\begin{equation}
\sigma (x)=x,\ \ \ \tau (x)=\alpha +1-x.  \label{PearsonLag}
\end{equation}

\item[ii.] For every $n\in \mathbb{N},$ 
\begin{equation}
\begin{array}{l}
L_{-1}^{\alpha }(x)=0,\ \ \ L_{0}^{\alpha }(x)=1,\vspace{0.3cm} \\ 
L_{n+1}^{\alpha }(x)=(x-\beta _{n})L_{n}^{\alpha }(x)-\gamma
_{n}L_{n-1}^{\alpha }(x),%
\end{array}
\label{RRLagu}
\end{equation}%
where 
\begin{equation*}
\beta _{n}=\beta _{n}^{\alpha }=2n+\alpha +1,\ \ \gamma _{n}=\gamma
_{n}^{\alpha }=n\left( n+\alpha \right) .
\end{equation*}


\item[iii.] For every $n\in \mathbb{N}$ 
\begin{equation}
L_{n}^{\alpha }(0)=(-1)^{n}\frac{\Gamma (n+\alpha +1)}{\Gamma (\alpha +1)}.
\label{L0}
\end{equation}

\item[iv.] For every $n\in \mathbb{N}$ 
\begin{equation}  \label{K00L}
K_{n}(0,0)=\displaystyle\frac{1}{n!}\frac{\Gamma (n+\alpha +2)}{\Gamma
(\alpha +2)\Gamma (\alpha+1)}.
\end{equation}

\item[v.] For every $n\in \mathbb{N}$ 
\begin{equation}  \label{StrctRelLag}
x\left[L_{n}^{\alpha}(x)\right]^{\prime
}=nL_{n}^{\alpha}(x)+n(n+\alpha)L_{n-1}^{\alpha}(x).
\end{equation}
\end{enumerate}

Now, we shall give an electrostatic interpretation for the zeros of Laguerre
type polynomials $L_{n}^{\alpha,N}(a;x)$ which are orthogonal with respect
to the measure $d\mu_N=x^{\alpha}e^{-x}dx+N\delta_{a}$, that is, they are
orthogonal with respect to the following inner product: 
\begin{equation*}
\langle p,q \rangle=\displaystyle\int_{0}^{+\infty}p(x)q(x)x^{%
\alpha}e^{-x}dx+Np(a)q(a),\ \ \ a\leq0.
\end{equation*}

We will analyze two cases:

\bigskip \textbf{1.} Firstly, we consider $a=0$. Thus, the polynomials $%
L_{n}^{\alpha,N}(0;x)$ are orthogonal with respect to 
\begin{equation*}
d\mu_N=x^{\alpha}e^{-x}dx+N\delta_{0}.
\end{equation*}

Now, observe that the polynomials $p_{n}^{\ast }(0;x)=L_{n}^{\alpha +1}(x)$
associated with the measure 
\begin{equation*}
d\mu ^{\ast }(x)=x^{\alpha +1}e^{-x}dx
\end{equation*}%
have Pearson's coefficients given by (see (\ref{Pea2}) and (\ref{PearsonLag}%
)) 
\begin{equation*}
\phi (x)=\sigma (x)=x,\ \ \ \psi (x)=\widetilde{\sigma }(x)+\tau (x)=\alpha
+2-x.
\end{equation*}%
%
%
%
%
%
%
%
On the other hand, from (\ref{StrctRelLag}), the structure relation (\ref%
{Struct-Rel}) reads 
\begin{equation*}
\phi (x)D\left( L_{n}^{\alpha +1}(x)\right) =A(x,n)L_{n}^{\alpha
+1}(x)+B(x,n)L_{n-1}^{\alpha +1}(x),
\end{equation*}%
where%
\begin{equation*}
\phi (x)=x,\ \ \ A(x,n)=n,\ \ \ B(x,n)=n+\alpha +1.
\end{equation*}%
In this case, the coefficients (\ref{cn}) and (\ref{TTRRK}) are given by 
\begin{eqnarray*}
\gamma _{n}^{\ast } &=&n(n+\alpha +1), \\
c_{n} &=&-\frac{1+NK_{n}(0,0)}{1+NK_{n-1}(0,0)}\frac{L_{n-1}^{\alpha }(0)}{%
L_{n}^{\alpha }(0)}n(n+\alpha ).
\end{eqnarray*}%
Using (\ref{L0}) and (\ref{K00L}), we obtain 
\begin{equation*}
\begin{array}{lll}
c_{n} & = & \dfrac{1+N\dfrac{\Gamma (n+\alpha +2)}{n!\Gamma (\alpha
+1)\Gamma (\alpha +2)}}{1+N\dfrac{\Gamma (n+\alpha +1)}{(n-1)!\Gamma (\alpha
+1)\Gamma (\alpha +2)}}\cdot n\vspace{0.3cm} \\ 
& = & \dfrac{n!\Gamma (\alpha +1)\Gamma (\alpha +2)+N\Gamma (n+\alpha +2)}{%
(n-1)!\Gamma (\alpha +1)\Gamma (\alpha +2)+N\Gamma (n+\alpha +1)}.%
\end{array}%
\end{equation*}%
As a conclusion, $Q(x)$ in (\ref{Qnx}) becomes%
\begin{equation*}
\begin{array}{lll}
Q\left( x\right) & = & \displaystyle B(x,n)+c_{n}\left[ -2A(x,n)+\phi
^{\prime }(x)-\psi (x)+\frac{c_{n}}{\gamma _{n-1}^{\ast }}B(x,n-1)\right] 
\vspace{0.3cm} \\ 
& = & n(n+\alpha +1)+c_{n}\left[ -2n+1-(\alpha +2-x)+c_{n}\right] \vspace{%
0.3cm} \\ 
& = & n(n+\alpha +1)-c_{n}\left( 2n+1+\alpha -c_{n}\right) +c_{n}x%
\end{array}%
\end{equation*}%
and its zero will be denoted by%
\begin{equation}
u_{n}=\left( 2n+1+\alpha -c_{n}\right) -\frac{n(n+\alpha +1)}{c_{n}}.
\label{zero-caso-1}
\end{equation}%
Now, it is easily to see that $0<c_{n}<n+\alpha +1$. Thus, $Q(0)<0$ and it
implies that $u_{n}>0$.

Taking into account%
\begin{equation*}
\Gamma (z)\sim \sqrt{\frac{2\pi }{z}}e^{-z}z^{z}=\sqrt{2\pi }e^{-z}z^{z-%
\frac{1}{2}},
\end{equation*}%
it is easy to see that 
\begin{eqnarray}
\frac{\Gamma (n+\alpha +1)}{\Gamma (n+1)} &\sim &\frac{\sqrt{2\pi }%
e^{-n-1}(n+\alpha +1)^{n+\frac{1}{2}}(n+\alpha +1)^{\alpha }e^{-\alpha }}{%
\sqrt{2\pi }e^{-n-1}(n+1)^{n+\frac{1}{2}}}  \notag \\
&=&(n+\alpha +1)^{\alpha }e^{-\alpha }\left( 1+\frac{\alpha }{n+1}\right)
^{n+\frac{1}{2}}  \notag \\
&\sim &(n+\alpha +1)^{\alpha }\sim n^{\alpha }.  \label{ratio-Gamma-1}
\end{eqnarray}%
Thus 
\begin{eqnarray}
1+N\frac{\Gamma (n+\alpha +1)}{(n-1)!\Gamma (\alpha +1)\Gamma (\alpha +2)}
&=&1+\frac{\Gamma (n+\alpha +1)}{\Gamma (n)}\frac{N}{\Gamma (\alpha
+1)\Gamma (\alpha +2)}  \notag \\
&\sim &1+\frac{Nn^{\alpha +1}}{\Gamma (\alpha +1)\Gamma (\alpha +2)}  \notag
\\
&\sim &\frac{Nn^{\alpha +1}}{\Gamma (\alpha +1)\Gamma (\alpha +2)}.
\label{ratio-Gamma-2}
\end{eqnarray}%
From (\ref{zero-caso-1}), we have 
\begin{eqnarray*}
u_{n} &=&\left( 2n+1+\alpha -c_{n}\right) -\frac{n(n+\alpha +1)}{c_{n}} \\
&=&\left( 2n+1+\alpha \right) -\dfrac{n+\alpha +1-\alpha -1+N\dfrac{%
(n+\alpha +1)\Gamma (n+\alpha +1)}{\left( n-1\right) !\Gamma (\alpha
+1)\Gamma (\alpha +2)}}{1+N\dfrac{\Gamma (n+\alpha +1)}{(n-1)!\Gamma (\alpha
+1)\Gamma (\alpha +2)}} \\
&&-(n+\alpha +1)\dfrac{1+N\dfrac{\Gamma (n+\alpha +1)}{(n-1)!\Gamma (\alpha
+1)\Gamma (\alpha +2)}}{1+N\dfrac{\Gamma (n+\alpha +2)}{n!\Gamma (\alpha
+1)\Gamma (\alpha +2)}}.
\end{eqnarray*}%
Then, after some computations, and using (\ref{ratio-Gamma-1}) and (\ref%
{ratio-Gamma-2}) we can estimate its behavior with respect to $N$ and $n$: 
\begin{eqnarray*}
u_{n} &=&\dfrac{\alpha +1}{\left( 1+N\dfrac{\Gamma (n+\alpha +1)}{%
(n-1)!\Gamma (\alpha +1)\Gamma (\alpha +2)}\right) }-\dfrac{\alpha +1}{%
\left( 1+N\dfrac{\Gamma (n+\alpha +2)}{n!\Gamma (\alpha +1)\Gamma (\alpha +2)%
}\right) } \\
&=&\dfrac{\left( \alpha +1\right) ^{2}}{\left( 1+N\frac{\Gamma (n+\alpha +1)%
}{\Gamma (n+1)}\frac{n}{\Gamma (\alpha +1)\Gamma (\alpha +2)}\right) \left(
1+N\frac{\Gamma (n+\alpha +1)}{\Gamma (n+1)}\frac{(n+\alpha +1)}{\Gamma
(\alpha +1)\Gamma (\alpha +2)}\right) } \\
&&\cdot \frac{N\Gamma (n+\alpha +1)}{\left( n!\Gamma (\alpha +1)\Gamma
(\alpha +2)\right) } \\
&\sim &\dfrac{\left( \alpha +1\right) ^{2}}{\left( N\frac{n^{\alpha +1}}{%
\Gamma (\alpha +1)\Gamma (\alpha +2)}\right) ^{2}}\cdot \dfrac{Nn^{\alpha }}{%
\Gamma (\alpha +1)\Gamma (\alpha +2)} \\
&=&\dfrac{\left( \alpha +1\right) \left[ \Gamma (\alpha +2)\right] ^{2}}{N}%
n^{-\alpha -2}
\end{eqnarray*}

The electrostatic interpretation of the distribution of zeros means that we
have an equilibrium position under the presence of an external
potential 
\begin{equation*}
\text{ln }Q(x)+\text{ln }x^{\alpha +2}e^{-x},
\end{equation*}%
where the first term represents a short range potential corresponding to a
unit charge located at $u_{n}$ and the second one is a long range potential
associated with the weight function (see also \cite{Ism00-B} and \cite{Ism05}%
). \bigskip

\textbf{2.} Now, we will consider $a<0$. In this case $d\mu^{%
\ast}(x)=(x-a)x^{\alpha }e^{-x}dx$. Thus,

\setcounter{prop}{4}

\begin{prop}
The structure relation \emph{(\ref{Struct-Rel})} for the measure 
\begin{equation*}
d\mu ^{\ast }(x)=(x-a)x^{\alpha }e^{-x}dx,\ \ a<0,
\end{equation*}%
is%
\begin{equation*}
\phi (x)D(p_{n}^{\ast }(a;x))=A(x,n)p_{n}^{\ast }(a;x)+B(x,n)p_{n-1}^{\ast
}(a;x),
\end{equation*}%
where 
\begin{equation*}
\begin{array}{lll}
\phi (x) & = & (x-a)x,\vspace{0.3cm} \\ 
A(x,n) & = & \displaystyle n\left[ x-\left( n+1+a_{n}\right) \left( 1+\frac{%
n+\alpha }{a_{n-1}}\right) \right] ,\vspace{0.3cm} \\ 
B(x,n) & = & \displaystyle\frac{n\left( n+\alpha \right) }{a_{n-1}}\left[
a_{n}x-\left( n+1+a_{n}\right) \left( n+1+a_{n}+\alpha \right) \right] .%
\end{array}%
\end{equation*}
\end{prop}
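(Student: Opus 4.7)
The plan is to combine the Pearson equation (\ref{Pea1}) for the perturbed functional $u^{\ast}$ with the kernel representation (\ref{p1}). Since $\sigma(x)=x$ does not vanish at $a<0$, formula (\ref{Pea1}) immediately yields $\phi(x)=(x-a)x$, matching the stated first coefficient; the nontrivial work is to identify $A(x,n)$ and $B(x,n)$.

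For that identification I would set $a_n=L_{n+1}^{\alpha}(a)/L_n^{\alpha}(a)$ and start from $(x-a)\,p_n^{\ast}(a;x) = L_{n+1}^{\alpha}(x) - a_n L_n^{\alpha}(x)$, read off (\ref{p1}). Differentiating, multiplying by $x$, and applying the classical Laguerre structure relation (\ref{StrctRelLag}) to $x(L_{n+1}^{\alpha})'$ and $x(L_n^{\alpha})'$ rewrites $\phi(x)\,D(p_n^{\ast}(a;x))$ as $-x\,p_n^{\ast}(a;x)$ plus a linear combination of $L_{n+1}^{\alpha}$, $L_n^{\alpha}$ and $L_{n-1}^{\alpha}$. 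I would then invert: from (\ref{pneqKnKn-1}) and the definition of $a_{n-1}$ one obtains
\[
L_n^{\alpha}(x) = p_n^{\ast}(a;x) - \frac{n(n+\alpha)}{a_{n-1}}\,p_{n-1}^{\ast}(a;x),
\]
and feeding this back into $(x-a)p_n^{\ast}(a;x) = L_{n+1}^{\alpha} - a_n L_n^{\alpha}$ and into the analogous identity at level $n-1$ expresses both $L_{n+1}^{\alpha}$ and $L_{n-1}^{\alpha}$ in the basis $\{p_n^{\ast}(a;x),\,p_{n-1}^{\ast}(a;x)\}$. Collecting coefficients yields raw formulas for $A(x,n)$ and $B(x,n)$ involving $a$, $\beta_n=2n+\alpha+1$, $\gamma_n=n(n+\alpha)$ and $a_{n-1}$.

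The main technical obstacle will be the final algebraic collapse into the product form announced in the statement. The single identity that drives the simplification is
\[
a_n + \frac{\gamma_n}{a_{n-1}} \;=\; a - \beta_n,
\]
obtained by evaluating the classical TTRR (\ref{RRLagu}) at $x=a$ and dividing through by $L_n^{\alpha}(a)$. Applying it to eliminate $n(n+\alpha)/a_{n-1}$ in the raw $A(x,n)$ lets the constant-in-$x$ part factor as $-n(n+1+a_n)\bigl(1+(n+\alpha)/a_{n-1}\bigr)$; the same substitution in the raw $B(x,n)$, combined with the elementary expansion $(n+1+a_n)(n+1+a_n+\alpha) = a_n^{2} + a_n(2n+\alpha+2) + (n+1)(n+1+\alpha)$, produces the second stated factorization.
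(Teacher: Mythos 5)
Your proposal is correct and follows the same skeleton as the paper's proof: differentiate the Christoffel formula $(x-a)p_{n}^{\ast }(a;x)=L_{n+1}^{\alpha }(x)-a_{n}L_{n}^{\alpha }(x)$ coming from (\ref{p1}), multiply by $x$, apply the Laguerre structure relation (\ref{StrctRelLag}) and the recurrence (\ref{RRLagu}), and then change basis to $\{p_{n}^{\ast },p_{n-1}^{\ast }\}$. The one genuine difference is how the basis change is carried out. The paper converts $L_{n}^{\alpha }$ and $L_{n-1}^{\alpha }$ via (\ref{pneqKnKn-1}), which drags in $p_{n-2}^{\ast }$, and then eliminates that term using the kernel three-term recurrence (\ref{TTRRK}) together with the formulas for $\beta _{n-1}^{\ast }$ and $\gamma _{n-1}^{\ast }$. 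You instead stay inside the span of $p_{n}^{\ast }$ and $p_{n-1}^{\ast }$ by solving the level-$(n-1)$ Christoffel formula $(x-a)p_{n-1}^{\ast }(a;x)=L_{n}^{\alpha }(x)-a_{n-1}L_{n-1}^{\alpha }(x)$ for $L_{n-1}^{\alpha }$; the apparent danger that the factor $(x-a)$ raises the degree of $B(x,n)$ does not arise, because the multiplier of $L_{n-1}^{\alpha }$ in the intermediate expression, namely $-a_{n}n(n+\alpha )$, is constant in $x$. The price of your route is that the raw coefficients contain $a$ explicitly and only collapse to the stated product forms after invoking $a_{n}+\gamma _{n}/a_{n-1}=a-\beta _{n}$, i.e.\ the recurrence evaluated at $x=a$, whereas the paper's intermediate coefficients factor directly but require the kernel recurrence data. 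I verified that your route reproduces exactly $A(x,n)=n\left[ x-(n+1+a_{n})\left( 1+\frac{n+\alpha }{a_{n-1}}\right) \right] $ and $B(x,n)=\frac{n(n+\alpha )}{a_{n-1}}\left[ a_{n}x-(n+1+a_{n})(n+1+a_{n}+\alpha )\right] $, using precisely the expansion of $(n+1+a_{n})(n+1+a_{n}+\alpha )$ you quote. Both arguments are sound; yours is marginally more economical since it never touches $p_{n-2}^{\ast }$ or the coefficients $\beta _{n-1}^{\ast }$, $\gamma _{n-1}^{\ast }$.
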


\begin{proof}
From (\ref{p1}) we get 
\begin{equation*}
\left( x-a\right) p_{n}^{\ast }(a;x)=L_{n+1}^{\alpha }(x)-\dfrac{%
L_{n+1}^{\alpha }(a)}{L_{n}^{\alpha }(a)}L_{n}^{\alpha }(x).
\end{equation*}%
Taking derivatives with respect to $x$ in both hand sides of the above
expression, and multiplying the resulting expression by $x$, we see that%
\begin{equation*}
xp_{n}^{\ast }(a;x)+x\left( x-a\right) D\left( p_{n}^{\ast }(a;x)\right)
=xD\left( L_{n+1}^{\alpha }(x)\right) -\frac{L_{n+1}^{\alpha }(a)}{%
L_{n}^{\alpha }(a)}xD\left( L_{n}^{\alpha }(x)\right) .
\end{equation*}%
Using the structure relation (\ref{StrctRelLag}) for Laguerre polynomials,
we obtain%
\begin{equation*}
\begin{array}{lll}
xp_{n}^{\ast }(a;x)+x\left( x-a\right) D\left( p_{n}^{\ast }(a;x)\right) & =
& \left( n+1\right) L_{n+1}^{\alpha }(x)+\left( n+1\right) \left( n+1+\alpha
\right) L_{n}^{\alpha }(x)\vspace{0.3cm} \\ 
&  & -\displaystyle\frac{L_{n+1}^{\alpha }(a)}{L_{n}^{\alpha }(a)}\left[
nL_{n}^{\alpha }(x)+n\left( n+\alpha \right) L_{n-1}^{\alpha }(x)\right] .%
\end{array}%
\end{equation*}%
Now, from (\ref{RRLagu}), we get 
\begin{equation*}
\begin{array}{l}
xp_{n}^{\ast }(a;x)+x\left( x-a\right) Dp_{n}^{\ast }(a;x)\vspace{0.3cm} \\ 
\displaystyle=\left( n+1\right) xL_{n}^{\alpha }(x)-\left( n+1\right) \left(
2n+1+\alpha \right) L_{n}^{\alpha }(x)-\left( n+1\right) n\left( n+\alpha
\right) L_{n-1}^{\alpha }(x)\vspace{0.3cm} \\ 
\displaystyle\hspace{0.5cm}+\left[ \left( n+1\right) \left( n+1+\alpha
\right) -\frac{nL_{n+1}^{\alpha }(a)}{L_{n}^{\alpha }(a)}\right]
L_{n}^{\alpha }(x)-\frac{L_{n+1}^{\alpha }(a)}{L_{n}^{\alpha }(a)}n\left(
n+\alpha \right) L_{n-1}^{\alpha }(x)\vspace{0.3cm} \\ 
\displaystyle=\left[ \left( n+1\right) x-\left( n+1\right) \left(
2n+1+\alpha \right) +\left( n+1\right) \left( n+1+\alpha \right) -\frac{%
nL_{n+1}^{\alpha }(a)}{L_{n}^{\alpha }(a)}\right] L_{n}^{\alpha }(x)\vspace{%
0.3cm} \\ 
\displaystyle\hspace{0.5cm}-\left( n\left( n+\alpha \right) \left(
n+1\right) +n\left( n+\alpha \right) \frac{L_{n+1}^{\alpha }(a)}{%
L_{n}^{\alpha }(a)}\right) L_{n-1}^{\alpha }(x)\vspace{0.3cm} \\ 
\displaystyle=\left[ \left( n+1\right) \left( x-n\right) -\frac{%
nL_{n+1}^{\alpha }(a)}{L_{n}^{\alpha }(a)}\right] L_{n}^{\alpha }(x)\vspace{%
0.3cm} \\ 
\displaystyle\hspace{0.5cm}-\left( n\left( n+1\right) \left( n+\alpha
\right) +n\left( n+\alpha \right) \frac{L_{n+1}^{\alpha }(a)}{L_{n}^{\alpha
}(a)}\right) L_{n-1}^{\alpha }(x).%
\end{array}%
\end{equation*}%
Now, using the notation%
\begin{equation}
a_{n}=\dfrac{L_{n+1}^{\alpha }\left( a\right) }{L_{n}^{\alpha }\left(
a\right) },  \label{ratios-1}
\end{equation}%
and, from (\ref{pneqKnKn-1}), 
\begin{equation*}
L_{n}^{\alpha }(x)=p_{n}^{\ast }(a;x)-\frac{\gamma _{n}}{a_{n-1}}%
p_{n-1}^{\ast }(a;x),
\end{equation*}%
we have 
\begin{equation*}
\begin{array}{l}
xp_{n}^{\ast }(a;x)+x\left( x-a\right) D\left( p_{n}^{\ast }(a;x)\right) 
\vspace{0.3cm} \\ 
\displaystyle=\left( \left( x-n\right) \left( n+1\right) -na_{n}\right)
L_{n}^{\alpha }(x)-n\left( n+\alpha \right) \left( n+1+a_{n}\right)
L_{n-1}^{\alpha }(x)\vspace{0.3cm} \\ 
\displaystyle=\left( \left( x-n\right) \left( n+1\right) -na_{n}\right) %
\left[ p_{n}^{\ast }(a;x)-\gamma _{n}\frac{1}{a_{n-1}}p_{n-1}^{\ast }(a;x)%
\right] \vspace{0.3cm} \\ 
\displaystyle\hspace{0.5cm}-n\left( n+\alpha \right) \left( n+1+a_{n}\right) %
\left[ p_{n-1}^{\ast }(a;x)-\gamma _{n-1}\frac{1}{a_{n-2}}p_{n-2}^{\ast
}(a;x)\right] \vspace{0.3cm} \\ 
\displaystyle=\left( \left( x-n\right) \left( n+1\right) -na_{n}\right)
p_{n}^{\ast }(a;x)\vspace{0.3cm} \\ 
\displaystyle\hspace{0.5cm}-\left[ \left( \left( n+1\right) \left(
x-n\right) -na_{n}\right) \frac{n\left( n+\alpha \right) }{a_{n-1}}+n\left(
n+\alpha \right) \left( n+1+a_{n}\right) \right] p_{n-1}^{\ast }(a;x)\vspace{%
0.3cm} \\ 
\displaystyle\hspace{0.5cm}+n\left( n-1\right) \left( n+\alpha \right)
\left( n-1+\alpha \right) \left( n+1+a_{n}\right) \frac{p_{n-2}^{\ast }(a;x)%
}{a_{n-2}}.%
\end{array}%
\end{equation*}%
Now, using the TTRR (\ref{TTRRK}) for monic kernels, and according to (\ref%
{coef-RRTTK}) 
\begin{equation*}
\frac{\gamma _{n-1}^{\ast }}{\gamma _{n-1}}=\dfrac{L_{n}^{\alpha }(a)}{%
L_{n-1}^{\alpha }(a)}\dfrac{L_{n-2}^{\alpha }(a)}{L_{n-1}^{\alpha }(a)}
\end{equation*}%
we obtain 
\begin{eqnarray*}
\frac{a_{n-1}}{a_{n-2}} &=&\frac{\gamma _{n-1}^{\ast }}{\left( n-1\right)
\left( n-1+\alpha \right) } \\
\frac{\left( n-1\right) \left( n-1+\alpha \right) }{a_{n-2}} &=&\frac{\gamma
_{n-1}^{\ast }}{a_{n-1}}
\end{eqnarray*}%
and then 
\begin{equation*}
\begin{array}{l}
xp_{n}^{\ast }(a;x)+x\left( x-a\right) D\left( p_{n}^{\ast }(a;x)\right) 
\vspace{0.3cm} \\ 
\displaystyle=\left( \left( x-n\right) \left( n+1\right) -na_{n}\right)
p_{n}^{\ast }(a;x)\vspace{0.3cm} \\ 
\displaystyle\hspace{0.5cm}-\left[ \left( \left( n+1\right) \left(
x-n\right) -na_{n}\right) \frac{n\left( n+\alpha \right) }{a_{n-1}}+n\left(
n+\alpha \right) \left( n+1+a_{n}\right) \right] p_{n-1}^{\ast }(a;x)\vspace{%
0.3cm} \\ 
\displaystyle\hspace{0.5cm}+n\left( n+\alpha \right) \left( n+1+a_{n}\right) 
\frac{\gamma _{n-1}^{\ast }p_{n-2}^{\ast }(a;x)}{a_{n-1}}\vspace{0.3cm} \\ 
\displaystyle=\left[ \left( \left( x-n\right) \left( n+1\right)
-na_{n}\right) -n\left( n+\alpha \right) \left( n+1+a_{n}\right) \frac{1}{%
a_{n-1}}\right] p_{n}^{\ast }(a;x)\vspace{0.3cm} \\ 
\displaystyle\hspace{0.5cm}-n\left( n+\alpha \right) \left[ \left( \left(
n+1\right) \left( x-n\right) -na_{n}\right) \frac{1}{a_{n-1}}\right. \vspace{%
0.3cm} \\ 
\displaystyle\hspace{0.5cm}+\left. \left( n+1+a_{n}\right) -\left(
n+1+a_{n}\right) \left( x-\beta _{n-1}^{\ast }\right) \frac{1}{a_{n-1}}%
\right] p_{n-1}^{\ast }(a;x)\vspace{0.3cm} \\ 
\end{array}%
\end{equation*}

\begin{equation*}
\begin{array}{l}
\displaystyle=\left[ \left( \left( n+1\right) \left( x-n\right)
-na_{n}\right) -\frac{n\left( n+\alpha \right) \left( n+1+a_{n}\right) }{%
a_{n-1}}\right] p_{n}^{\ast }(a;x)\vspace{0.3cm} \\ 
\displaystyle\hspace{0.5cm}+n\left( n+\alpha \right) \left[ \frac{1}{a_{n-1}}%
\left( n+1+a_{n}\right) \left( x-\beta _{n-1}^{\ast }\right) \right. \vspace{%
0.3cm} \\ 
\displaystyle\hspace{0.5cm}-\left. \frac{1}{a_{n-1}}\left( \left( n+1\right)
\left( x-n\right) -na_{n}\right) -\left( n+1+a_{n}\right) \right]
p_{n-1}^{\ast }(a;x).%
\end{array}%
\end{equation*}%
Therefore 
\begin{equation*}
\phi (x)D(p_{n}^{\ast }(a;x))=A(x,n)p_{n}^{\ast }(a;x)+B(x,n)p_{n-1}^{\ast
}(a;x),
\end{equation*}%
where 
\begin{eqnarray*}
\phi (x) &=&x\left( x-a\right) \\
A(x,n) &=&\left( \left( n+1\right) \left( x-n\right) -na_{n}\right) -\frac{%
n\left( n+\alpha \right) \left( n+1+a_{n}\right) }{a_{n-1}}-x \\
B(x,n) &=&n\left( n+\alpha \right) \left[ \frac{1}{a_{n-1}}\left(
n+1+a_{n}\right) \left( x-\beta _{n-1}^{\ast }\right) \right. \\
&&-\left. \frac{1}{a_{n-1}}\left( \left( n+1\right) \left( x-n\right)
-na_{n}\right) -\left( n+1+a_{n}\right) \right] .
\end{eqnarray*}%
Simplifying these expressions we have 
\begin{equation*}
\begin{array}{lll}
\displaystyle A(x,n) & = & \displaystyle nx-n\left( n+1\right) -na_{n}-\frac{%
n\left( n+\alpha \right) \left( n+1+a_{n}\right) }{a_{n-1}}\vspace{0.3cm} \\ 
& = & \displaystyle n\left[ x-\left( n+1+a_{n}\right) \left( 1+\frac{%
n+\alpha }{a_{n-1}}\right) \right]%
\end{array}%
\end{equation*}%
and 
\begin{equation*}
\begin{array}{lll}
B(x.n) & = & \displaystyle n\left( n+\alpha \right) \left[ \frac{%
xa_{n}+n+n^{2}}{a_{n-1}}+\frac{na_{n}}{a_{n-1}}-\frac{\left(
n+1+a_{n}\right) }{a_{n-1}}\beta _{n-1}^{\ast }-\left( n+1+a_{n}\right) %
\right] \vspace{0.3cm} \\ 
& = & \displaystyle n\left( n+\alpha \right) \left[ \frac{a_{n}}{a_{n-1}}x+%
\frac{n+1+a_{n}}{a_{n-1}}\left( n-\beta _{n-1}^{\ast }\right) -\left(
n+1+a_{n}\right) \right] .%
\end{array}%
\end{equation*}%
In the above expression, using again (\ref{coef-RRTTK}) 
\begin{equation*}
\beta _{n}^{\ast }=\beta _{n+1}+a_{n+1}-a_{n}=2n+\alpha +3+a_{n+1}-a_{n}
\end{equation*}%
we obtain 
\begin{equation*}
\displaystyle B(x,n)=\displaystyle\frac{n\left( n+\alpha \right) }{a_{n-1}}%
\left[ a_{n}x-\left( n+1+a_{n}\right) \left( n+1+a_{n}+\alpha \right) \right]
.
\end{equation*}
\end{proof}

This is an alternative approach to the method described in \cite{MarcRon89}.
\bigskip

Notice that the Pearson equation for the linear functional associated with
the measure becomes%
\begin{equation*}
D\left( \phi u^{\ast }\right) =\psi u^{\ast }
\end{equation*}%
and (see (\ref{Pea1}) and (\ref{PearsonLag})) 
\begin{eqnarray*}
\phi (x) &=&(x-a)\sigma (x)=(x-a)x, \\
\psi (x) &=&2\sigma (x)+(x-a)\tau (x)=2x+\left( x-a\right) \left( \alpha
+1-x\right) .
\end{eqnarray*}

According to (\ref{coef-RRTTK}) and (\ref{coef-RRTTK-2}),%
\begin{eqnarray*}
\beta _{n-1}^{\ast } &=&\beta _{n}+a_{n}-a_{n-1}, \\
\gamma _{n-1}^{\ast } &=&\dfrac{a_{n-1}}{a_{n-2}}\gamma _{n-1}.
\end{eqnarray*}%
This means that $Q(x)$ in (\ref{Qnx}) is the following quadratic polynomial 
\begin{equation*}
\begin{array}{lll}
Q\left( x\right) & = & \displaystyle B(x,n)+c_{n}\left[ -2A(x,n)+\phi
^{\prime }(x)-\psi (x)+\frac{c_{n}}{\gamma _{n-1}^{\ast }}B(x,n-1)\right] 
\vspace{0.3cm} \\ 
& = & \displaystyle\gamma _{n}\frac{a_{n}}{a_{n-1}}x+\gamma _{n}\frac{%
n+1+a_{n}}{a_{n-1}}\left( n-\left( \beta _{n}+a_{n}-a_{n-1}\right) \right) 
\vspace{0.3cm} \\ 
&  & -\displaystyle\gamma _{n}\left( n+1+a_{n}\right) -2nc_{n}x+2c_{n}\left(
n+1+a_{n}\right) \left( n+\frac{\gamma _{n}}{a_{n-1}}\right) \vspace{0.3cm}
\\ 
&  & \displaystyle+c_{n}x^{2}-c_{n}\left( a+\alpha +1\right) x+c_{n}\left(
a\left( \alpha +1\right) -a\right) +c_{n}^{2}x\vspace{0.3cm} \\ 
&  & \displaystyle+c_{n}^{2}\frac{n+a_{n-1}}{a_{n-1}}\left( n-1-\beta
_{n-1}-a_{n-1}+a_{n-2}\right) -c_{n}^{2}\frac{a_{n-2}}{a_{n-1}}\left(
n+a_{n-1}\right) .%
\end{array}%
\end{equation*}%
After some tedious computation the above expression becomes%
\begin{eqnarray*}
Q\left( x\right) &=&c_{n}x^{2}+\left( \gamma _{n}\frac{a_{n}}{a_{n-1}}%
-2nc_{n}+c_{n}^{2}-c_{n}\left( a+\alpha +1\right) \right) x \\
&&+\left( n+1+a_{n}\right) \left[ \left( 2c_{n}n-\gamma _{n}\right) +\frac{%
\gamma _{n}}{a_{n-1}}\left( n-\beta _{n}-a_{n}+a_{n-1}+2c_{n}\right) \right]
\\
&&+c_{n}^{2}\frac{\left( n+a_{n-1}\right) }{a_{n-1}}\left( n-1-2n+1-\alpha
-a_{n-1}\right) +ac_{n}\alpha ,
\end{eqnarray*}%
i.e., 
\begin{equation*}
Q\left( x\right) =c_{n}x^{2}+r_{n}x+s_{n}
\end{equation*}%
with%
\begin{eqnarray*}
r_{n} &=&n\left( n+\alpha \right) \frac{a_{n}}{a_{n-1}}+c_{n}^{2}-c_{n}%
\left( a+\alpha +1+2n\right) \\
&=&\left( c_{n}+a_{n}\right) \left( c_{n}-a_{n}\right) -\left(
c_{n}-a_{n}\right) a-\left( c_{n}+a_{n}\right) \left( 2n+\alpha +1\right)
\end{eqnarray*}%
and 
\begin{eqnarray*}
s_{n} &=&\left( n+1+a_{n}\right) \left[ \left( n+1+a_{n}+\alpha \right)
\left( 2n+1+a_{n}+\alpha -a-2c_{n}\right) +2ac_{n}\right] \\
&&+a\alpha c_{n}+c_{n}^{2}\left( a_{n}-a_{n-1}+1-a\right) .
\end{eqnarray*}

The zeros of this polynomial are%
\begin{eqnarray*}
z_{1,n} &=&-\frac{1}{2c_{n}}\left( r_{n}+\sqrt{r_{n}^{2}-4s_{n}c_{n}}\right)
, \\
z_{2,n} &=&-\frac{1}{2c_{n}}\left( r_{n}-\sqrt{r_{n}^{2}-4s_{n}c_{n}}\right)
.
\end{eqnarray*}

Taking into account%
\begin{eqnarray*}
\frac{\psi }{\phi } &=&\frac{2\sigma (x)+(x-a)\tau (x)}{(x-a)\sigma (x)} \\
&=&\frac{2}{x-a}+\frac{\tau (x)}{\sigma (x)} \\
&=&\frac{2}{x-a}+\frac{\alpha +1-x}{x} \\
&=&\frac{2}{x-a}+\frac{\alpha +1}{x}-1,
\end{eqnarray*}%
the electrostatic interpretation means that the equilibrium position for the
zeros under the presence of an external potential%
\begin{equation*}
\text{ln }Q(x)+\text{ln }\left( x-a\right) ^{2}x^{\alpha +1}e^{-x},
\end{equation*}%
where the first one is a short range potential corresponding to two unit
charges located at $z_{1,n}$ and $z_{2,n}$ and the second one is a long
range potential associated with a polynomial perturbation of the weight
function. \bigskip 


\subsection{Electrostatic interpretation for the zeros of Jacobi type
orthogonal polynomials}

We shall use some basic properties of the Jacobi classical monic polynomials 
$P_{n}^{\alpha,\beta}(x)$.

\begin{enumerate}
\item[i.] Let $u$ be the linear funcional 
\begin{equation*}
\langle u,p\rangle =\displaystyle\int_{-1}^{1}p(x)(1-x)^{\alpha
}(1+x)^{\beta }dx,\quad \alpha ,\beta >-1,\,\,p\in \mathbb{P}.
\end{equation*}%
So $u$ satisfies the Pearson differential equation 
\begin{equation*}
D(\sigma (x)u)=\tau (x)u,
\end{equation*}%
where 
\begin{equation}
\sigma (x)=1-x^{2},\ \ \ \tau (x)=(\beta -\alpha )-(\alpha +\beta +2)x.
\label{PearsonJac}
\end{equation}

\item[ii.] For every $n\in \mathbb{N},$ 
\begin{equation}
\begin{array}{l}
P_{-1}^{\alpha ,\beta }(x)=0,\ \ \ P_{0}^{\alpha ,\beta }(x)=1,\vspace{0.3cm}
\\ 
P_{n+1}^{\alpha ,\beta }(x)=(x-\beta _{n})P_{n}^{\alpha ,\beta }(x)-\gamma
_{n}P_{n-1}^{\alpha ,\beta }(x),%
\end{array}
\label{RRJac}
\end{equation}%
where 
\begin{equation*}
\begin{array}{l}
\beta _{n}=\beta _{n}^{\alpha ,\beta }=\displaystyle\frac{\beta ^{2}-\alpha
^{2}}{(2n+\alpha +\beta )(2n+\alpha +\beta +2)}%
\end{array}%
\end{equation*}%
and 
\begin{equation*}
\begin{array}{l}
\gamma _{n}=\gamma _{n}^{\alpha ,\beta }=\displaystyle\frac{4n(n+\alpha
)(n+\beta )(n+\alpha +\beta )}{(2n+\alpha +\beta -1)(2n+\alpha +\beta
)^{2}(2n+\alpha +\beta +1)}.%
\end{array}%
\end{equation*}


\item[iii.] For every $n\in \mathbb{N}$ 
\begin{equation}
P_{n}^{\alpha ,\beta }(-1)=\displaystyle\frac{(-1)^{n}2^{n}\Gamma (n+\beta
+1)\Gamma (n+\alpha +\beta +1)}{\Gamma (\beta +1)\Gamma (2n+\alpha +\beta +1)%
}.  \label{P1}
\end{equation}

\item[iv.] For every $n\in \mathbb{N}$ 
\begin{equation}
K_{n-1}(-1,-1)=\frac{1}{2^{\alpha +\beta +1}}\frac{\Gamma (n+\beta +1)\Gamma
(n+\alpha +\beta +1)}{\Gamma (n)\Gamma (\beta +1)\Gamma (\beta +2)\Gamma
(n+\alpha )}.  \label{K00J}
\end{equation}

\item[v.] For every $n\in \mathbb{N}$ 
\begin{equation}  \label{StrctRelJac}
\begin{array}{lll}
(1-x^2)D\left( P_{n}^{\alpha ,\beta}(x)\right) & = & \displaystyle\frac{%
-n[\beta -\alpha+(2n+\alpha +\beta)x]}{2n+\alpha +\beta}P_{n}^{\alpha,%
\beta}(x)\vspace{0.3cm} \\ 
&  & +\displaystyle\frac{4n(n+\alpha )(n+\beta)(n+\alpha +\beta)}{(2n+\alpha
+\beta)^{2}(2n+\alpha +\beta -1)}P_{n-1}^{\alpha ,\beta}(x).%
\end{array}%
\end{equation}
\end{enumerate}

Now, we shall give an electrostatic interpretation for the zeros of Jacobi
type polynomials $P_{n}^{\alpha ,\beta ,N}(a;x)$ which are orthogonal with
respect to the measure $d\mu _{N}=(1-x)^{\alpha }(1+x)^{\beta }dx+N\delta
_{a}$, that is, they are orthogonal with respect the following inner
product: 
\begin{equation*}
\langle p,q\rangle =\displaystyle\int_{-1}^{1}p(x)q(x)(1-x)^{\alpha
}(1+x)^{\beta }dx+Np(a)q(a),\ \ \ a\not\in (-1,1),\ N\geq 0.
\end{equation*}

We will analyze two cases: \bigskip

\textbf{1.} Firstly, we consider $a=-1$. Thus, the polynomials $%
P_{n}^{\alpha ,\beta ,N}(-1;x)$ are orthogonal with respect to 
\begin{equation*}
d\mu _{N}=(1-x)^{\alpha }(1+x)^{\beta }dx+N\delta _{-1}.
\end{equation*}%
Therefore, the polynomials $p_{n}^{\ast }(-1;x)=P_{n}^{\alpha ,\beta +1}(x)$
associated with the measure 
\begin{equation*}
d\mu ^{\ast }(x)=(x-(-1))d\mu (x)=(1-x)^{\alpha }(1+x)^{\beta +1}dx
\end{equation*}%
have Pearson's coefficients given by (see (\ref{Pea2}) and (\ref{PearsonJac}%
)) 
\begin{equation*}
\phi (x)=\sigma (x)=1-x^{2},\ \ \ \psi (x)=\widetilde{\sigma }(x)+\tau
(x)=\left( \beta -\alpha +1\right) -\left( \alpha +\beta +3\right) x.
\end{equation*}%
%
%
%
%
%
%
%
%
%
%
%
%
%
%
%
%
%
%
%
%
%
%
%
On the other hand, from (\ref{StrctRelJac}), the structure relation (\ref%
{Struct-Rel}) reads%
\begin{equation*}
\phi (x)D\left( P_{n}^{\alpha ,\beta +1}(x)\right) =A(x,n)P_{n}^{\alpha
,\beta +1}(x)+B(x,n)P_{n-1}^{\alpha ,\beta +1}(x),
\end{equation*}%
where

\begin{equation*}
\begin{array}{l}
A(x,n)=\displaystyle\frac{-n[\beta -\alpha +1+(2n+\alpha +\beta +1)x]}{%
2n+\alpha +\beta +1},\vspace{0.3cm} \\ 
B(x,n)=\displaystyle\frac{4n(n+\alpha )(n+\beta +1)(n+\alpha +\beta +1)}{%
(2n+\alpha +\beta +1)^{2}(2n+\alpha +\beta )}.%
\end{array}%
\end{equation*}%
The coefficient $\gamma _{n}^{\ast }$ in (\ref{TTRRK}) when $p_{n}^{\ast
}(-1;x)=P_{n}^{\alpha ,\beta +1}(x)$ is 
\begin{equation*}
\begin{array}{l}
\gamma _{n}^{\ast }=\gamma _{n}^{\alpha ,\beta +1}=\displaystyle\frac{%
4n(n+\alpha )(n+\beta +1)(n+\alpha +\beta +1)}{(2n+\alpha +\beta )(2n+\alpha
+\beta +1)^{2}(2n+\alpha +\beta +2)}.%
\end{array}%
\end{equation*}

Now, we will find the coefficient $c_{n}$ in (\ref{02nd-Rel}). Using (\ref%
{P1}) and (\ref{K00J}) it follows that%
\begin{eqnarray*}
c_{n} &=&-\frac{1+NK_{n}(-1,-1)}{1+NK_{n-1}(-1,-1)}\frac{P_{n-1}^{\alpha
,\beta }(-1)}{P_{n}^{\alpha ,\beta }(-1)}\cdot \gamma _{n}^{\alpha ,\beta }
\\
&=&\frac{1+NK_{n}(-1,-1)}{1+NK_{n-1}(-1,-1)}\frac{2n(n+\alpha )}{(2n+\alpha
+\beta )(2n+\alpha +\beta +1)}>0,
\end{eqnarray*}%
and, finally, we get%
\begin{eqnarray*}
Q(x) &=&B(x,n)+c_{n}\left[ (2n+\alpha +\beta )c_{n}-\frac{(\alpha +\beta
+1)(\beta -\alpha +1)}{2n+\alpha +\beta +1}\right] \\
&&+(2n+\alpha +\beta +1)c_{n}x.
\end{eqnarray*}

Now, we will show that the zero of $Q(x)$ belongs to $(-1,1)$. In fact,
observe that 
\begin{equation*}
\begin{array}{lll}
Q(1) & = & \displaystyle B(x,n)+c_{n}\left[ (2n+\alpha +\beta )c_{n}-\frac{%
(\alpha +\beta +1)(\beta -\alpha +1)}{2n+\alpha +\beta +1}\right] \vspace{%
0.3cm} \\ 
&  & +(2n+\alpha +\beta +1)c_{n}\vspace{0.3cm} \\ 
& = & \displaystyle B(x,n)+c_{n}\left[ (2n+\alpha +\beta )c_{n}+\frac{%
2(2n(n+\alpha +\beta +1)+\alpha (\alpha +\beta +1))}{2n+\alpha +\beta +1}%
\right] \vspace{0.3cm} \\ 
& > & 0,%
\end{array}%
\end{equation*}%
and, after some tedious calculations,%
\begin{equation*}
\begin{array}{lll}
Q(-1) & = & \displaystyle\frac{-2^{\alpha +\beta +3}(\beta +1)\Gamma
(n)\Gamma (n+\alpha )\Gamma (\beta +2)^{2}\Gamma (n+\beta +1)\Gamma
(n+\alpha +\beta +1)N}{2n+\alpha +\beta }\vspace{0.3cm} \\ 
&  & \hspace{0.5cm}\times \displaystyle\frac{1}{2^{\alpha +\beta +1}\Gamma
(n)\Gamma (n+\alpha )\Gamma (\beta +1)\Gamma (\beta +2)+N\Gamma (n+\beta
+1)\Gamma (n+\alpha +\beta +1)}\vspace{0.3cm} \\ 
& < & 0.%
\end{array}%
\end{equation*}

Next we will show the behavior of this zero,%
\begin{equation}
\begin{array}{lll}
u_{n} & = & \displaystyle-\frac{2(n+\beta +1)(n+\alpha +\beta +1)}{%
(2n+\alpha +\beta +1)^{2}}\frac{1+NK_{n-1}(-1,-1)}{1+NK_{n}(-1,-1)}\vspace{%
0.3cm} \\ 
&  & \displaystyle-\frac{2n(n+\alpha )}{(2n+\alpha +\beta +1)^{2}}\frac{%
1+NK_{n}(-1,-1)}{1+NK_{n-1}(-1,-1)}+\frac{(\alpha +\beta +1)(\beta -\alpha
+1)}{(2n+\alpha +\beta +1)^{2}}\vspace{0.3cm} \\ 
& = & \displaystyle\frac{-2(n+\beta +1)(n+\alpha +\beta +1)-2n\left(
n+\alpha \right) +(\alpha +\beta +1)(\beta -\alpha +1)}{(2n+\alpha +\beta
+1)^{2}}\vspace{0.3cm} \\ 
&  & \displaystyle+\frac{N\left( P_{n}^{\alpha ,\beta }(-1)\right) ^{2}\text{%
{\LARGE {/}}}\left\Vert P_{n}^{\alpha ,\beta }\right\Vert _{\mu }^{2}}{%
(2n+\alpha +\beta +1)^{2}}\left[ \frac{2\left( n+\beta +1\right) \left(
n+\alpha +\beta +1\right) }{1+NK_{n}\left( -1,-1\right) }-\frac{2n\left(
n+\alpha \right) }{1+NK_{n}\left( -1,-1\right) }\right] \vspace{0.3cm} \\ 
& = & \displaystyle\frac{-4n^{2}-4\left( \alpha +\beta +1\right) n-\left(
\alpha +\beta +1\right) ^{2}}{\left( 2n+\alpha +\beta +1\right) ^{2}}\vspace{%
0.3cm} \\ 
&  & \displaystyle+2N\frac{\left( P_{n}^{\alpha ,\beta }(-1)\right) ^{2}%
\text{{\LARGE {/}}}\left\Vert P_{n}^{\alpha ,\beta }\right\Vert _{\mu }^{2}}{%
(2n+\alpha +\beta +1)^{2}}\left[ \frac{\left( n+\beta +1\right) \left(
n+\alpha +\beta +1\right) }{1+NK_{n}\left( -1,-1\right) }-\frac{n\left(
n+\alpha \right) }{1+NK_{n}\left( -1,-1\right) }\right] \vspace{0.3cm} \\ 
&  & \displaystyle-1+2N\frac{\left( P_{n}^{\alpha ,\beta }(-1)\right) ^{2}%
\text{{\LARGE {/}}}\left\Vert P_{n}^{\alpha ,\beta }\right\Vert _{\mu }^{2}}{%
(2n+\alpha +\beta +1)^{2}}\left[ \frac{\left( n+\beta +1\right) \left(
n+\alpha +\beta +1\right) }{1+NK_{n}\left( -1,-1\right) }-\frac{n\left(
n+\alpha \right) }{1+NK_{n}\left( -1,-1\right) }\right] .\vspace{0.3cm}%
\end{array}
\label{AsintZero-01}
\end{equation}%
But from

\begin{equation*}
\frac{\left( P_{n}^{\alpha ,\beta }(x)\right) ^{2}}{\left\Vert P_{n}^{\alpha
,\beta }\right\Vert _{\mu }^{2}}=K_{n}(x,x)-K_{n-1}(x,x)
\end{equation*}%
and (\ref{K00J}) it easily follows that%
\begin{equation}
\frac{\left( P_{n}^{\alpha ,\beta }(x)\right) ^{2}}{\left\Vert P_{n}^{\alpha
,\beta }\right\Vert _{\mu }^{2}}=K_{n-1}(x,x)\left( \frac{\left( n+\alpha
+\beta +1\right) \left( n+\beta +1\right) }{n\left( n+\alpha \right) }%
-1\right) .  \label{AsintZero-02}
\end{equation}%
From (\ref{AsintZero-01}) and (\ref{AsintZero-02}) we get%
\begin{equation*}
\begin{array}{lll}
u_{n} & = & \displaystyle-1+2N\frac{\left( P_{n}^{\alpha ,\beta }(-1)\right)
^{2}\text{{\LARGE {/}}}\left\Vert P_{n}^{\alpha ,\beta }\right\Vert _{\mu
}^{2}}{(2n+\alpha +\beta +1)^{2}\left( 1+NK_{n}\left( -1,-1\right) \right) }%
\vspace{0.3cm} \\ 
&  & \displaystyle\hspace{0.5cm}\times \left[ \frac{\left( n+\alpha +\beta
+1\right) \left( n+\beta +1\right) }{n\left( n+\alpha \right) }-\frac{%
1+NK_{n}\left( -1,-1\right) }{1+NK_{n-1}\left( -1,-1\right) }\right] \vspace{%
0.3cm} \\ 
& = & \displaystyle-1+2N\frac{n(n+\alpha )\left( P_{n}^{\alpha ,\beta
}(-1)\right) ^{2}\text{{\LARGE {/}}}\left\Vert P_{n}^{\alpha ,\beta
}\right\Vert _{\mu }^{2}}{(2n+\alpha +\beta +1)^{2}\left( 1+NK_{n}\left(
-1,-1\right) \right) }\vspace{0.3cm} \\ 
&  & \displaystyle\hspace{0.5cm}\times \left[ \frac{\left( P_{n}^{\alpha
,\beta }(-1)\right) ^{2}\text{{\LARGE {/}}}\left\Vert P_{n}^{\alpha ,\beta
}\right\Vert _{\mu }^{2}}{K_{n-1}\left( -1,-1\right) \left( 1+NK_{n-1}\left(
-1,-1\right) \right) }\right] ,\vspace{0.3cm} \\ 
& > & -1.%
\end{array}%
\end{equation*}%
Moreover, using the asymptotics of $K_{n}\left( -1,-1\right) $, which
according with (\ref{K00J}) is%
\begin{equation*}
K_{n-1}(-1,-1)
\end{equation*}%
\begin{equation*}
\begin{array}{ll}
= & \displaystyle\frac{1}{2^{\alpha +\beta +2}\Gamma \left( \beta +1\right)
\Gamma \left( \beta +2\right) }\vspace{0.3cm} \\ 
& \displaystyle\times \frac{e^{-\left( n+\beta +1\right) }\left( n+\beta
+1\right) ^{\left( n+\beta +1-\frac{1}{2}\right) }e^{-\left( n+\alpha +\beta
+1\right) }\left( n+\alpha +\beta +1\right) ^{\left( n+\alpha +\beta +1-%
\frac{1}{2}\right) }}{e^{-n}n^{n-\frac{1}{2}}\left( n+\alpha \right)
^{n+\alpha -\frac{1}{2}}e^{-\left( n+\alpha \right) }}\vspace{0.3cm} \\ 
\sim & \displaystyle\frac{1}{2^{\alpha +\beta +2}\Gamma \left( \beta
+1\right) \Gamma \left( \beta +2\right) }n^{2\left( \beta +1\right) },%
\vspace{0.3cm}%
\end{array}%
\end{equation*}%
and after some calculations, we finally obtain 
\begin{equation*}
\begin{array}{lll}
u_{n} & = & \displaystyle-1+2N\frac{\left[ (2\beta +2)n+(\alpha +\beta +1)%
\right] ^{2}}{n(n+\alpha )\left( 2n+\alpha +\beta +1\right) ^{2}}\vspace{%
0.3cm} \\ 
&  & \displaystyle\hspace{0.5cm}\times \frac{K_{n-1}\left( -1,-1\right) }{%
\left( 1+NK_{n}\left( -1,-1\right) \right) \left( 1+NK_{n-1}\left(
-1,-1\right) \right) }\vspace{0.3cm} \\ 
& \sim & \displaystyle-1+\frac{2^{\alpha +\beta +2}\left( \beta +1\right) %
\left[ \Gamma \left( \beta +2\right) \right] ^{2}}{N}n^{-2\left( \beta
+2\right) }.\vspace{0.3cm}%
\end{array}%
\end{equation*}

\bigskip

The electrostatic interpretation means that the equilibrium position for the
zeros under the presence of an external potential%
\begin{equation*}
\text{ln}Q(x)+\text{ln }(1-x)^{\alpha +1}(1+x)^{\beta +2},
\end{equation*}%
where the first one is a short range potential corresponding to a unit
charge located at the zero of $Q(x)$ and the other one is a long range
potential associated with the weight function.

\bigskip

\textbf{2.} Now, we will consider $a<-1$. In this case, 
\begin{equation*}
d\mu ^{\ast }(x)=(x-a)(1-x)^{\alpha }(1+x)^{\beta }dx.
\end{equation*}
As a consequence,

\setcounter{prop}{5}

\begin{prop}
The structure relation \emph{(\ref{Struct-Rel})} for the above measure is%
\begin{equation*}
\phi (x)D(p_{n}^{\ast }(a;x))=A(x,n)p_{n}^{\ast }(a;x)+B(x,n)p_{n-1}^{\ast
}(a;x),
\end{equation*}%
where 
\begin{eqnarray*}
\phi (x) &=&(x-a)(1-x^{2}), \\
A(x,n) &=&a_{n+1}(x-\beta _{n})+b_{n+1}-\lambda _{n}a_{n}-(a_{n+1}\gamma
_{n}+\lambda _{n}b_{n})\frac{1}{\lambda _{n-1}}-1+x^{2}, \\
B(x,n) &=&\left( a_{n+1}(x-\beta _{n})+b_{n+1}-\lambda _{n}a_{n}\right) 
\frac{\gamma _{n}}{\lambda _{n-1}}+a_{n+1}\gamma _{n}+\lambda _{n}b_{n} \\
&&-(a_{n+1}\gamma _{n}+\lambda _{n}b_{n})\frac{x-\beta _{n-1}^{\ast }}{%
\lambda _{n-1}}.
\end{eqnarray*}
\end{prop}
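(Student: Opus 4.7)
The proof proceeds along the same lines as the Laguerre-case derivation of the previous proposition, the only real difference being that $\phi(x)=(x-a)(1-x^2)$ now has degree three instead of two. I would start from the Christoffel identity (\ref{p1}) specialized to the Jacobi setting,
\begin{equation*}
(x-a)\,p_n^*(a;x)=P_{n+1}^{\alpha,\beta}(x)-\lambda_n P_n^{\alpha,\beta}(x),\qquad \lambda_n:=\frac{P_{n+1}^{\alpha,\beta}(a)}{P_n^{\alpha,\beta}(a)},
\end{equation*}
differentiate both sides with respect to $x$, and then multiply by $(1-x^2)$. The left-hand side becomes $(1-x^2)\,p_n^*(a;x)+\phi(x)D(p_n^*(a;x))$; transferring the first summand to the right produces a spurious term $-(1-x^2)\,p_n^*(a;x)$, which is precisely the origin of the $-1+x^2$ summand in $A(x,n)$.

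Next I would invoke the classical Jacobi structure relation (\ref{StrctRelJac}) to replace each factor $(1-x^2)D(P_k^{\alpha,\beta})$ (for $k=n$ and $k=n+1$) by a two-term combination of $P_k^{\alpha,\beta}$ and $P_{k-1}^{\alpha,\beta}$, whose coefficients are the quantities $a_k$ (linear part, written as $a_k(x-\beta_{k-1})$ after reorganizing against the recurrence parameter) and $b_k$ appearing in the statement. A single application of the Jacobi TTRR (\ref{RRJac}) then eliminates $P_{n+1}^{\alpha,\beta}$ in favor of $P_n^{\alpha,\beta}$ and $P_{n-1}^{\alpha,\beta}$, so the right-hand side becomes a linear combination of $P_n^{\alpha,\beta}$ and $P_{n-1}^{\alpha,\beta}$ with polynomial coefficients in $x$.

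Finally I would switch to the kernel-polynomial basis using (\ref{pneqKnKn-1}), namely $P_k^{\alpha,\beta}(x)=p_k^*(a;x)-(\gamma_k/\lambda_{k-1})\,p_{k-1}^*(a;x)$, applied for $k\in\{n,n-1\}$. This rewrites the right-hand side as a combination of $p_n^*(a;x)$, $p_{n-1}^*(a;x)$ and $p_{n-2}^*(a;x)$; a last pass of the kernel TTRR (\ref{TTRRK}), whose parameters $\beta_{n-1}^*$ and $\gamma_{n-1}^*$ are furnished by (\ref{coef-RRTTK})--(\ref{coef-RRTTK-2}), removes $p_{n-2}^*(a;x)$ and leaves the required two-term relation. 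Reading off the coefficients of $p_n^*(a;x)$ and $p_{n-1}^*(a;x)$ yields the announced formulas for $A(x,n)$ and $B(x,n)$.

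The main obstacle is not conceptual but combinatorial: compared with the Laguerre case, the cubic factor $\phi(x)=(x-a)(1-x^2)$ forces the linear coefficients $a_k(x-\beta_{k-1})$ coming from (\ref{StrctRelJac}) to be multiplied by further linear factors at each reduction step, namely the $x-\beta_n$ produced by (\ref{RRJac}) when killing $P_{n+1}^{\alpha,\beta}$ and the $x-\beta_{n-1}^*$ produced by (\ref{TTRRK}) when killing $p_{n-2}^*$. One must therefore track the constant, linear and quadratic contributions in $x$ with care in order to recover the compact closed form stated for $A(x,n)$ and $B(x,n)$; once that bookkeeping is done, the three subterms in $A(x,n)$ identify respectively with the coefficient of $P_n^{\alpha,\beta}$ after the Jacobi TTRR substitution, with the cross-term $-\lambda_n a_n$ produced when the $p_{n-1}^*$ piece of $P_n^{\alpha,\beta}$ is mapped back to $p_n^*$ through (\ref{pneqKnKn-1}), and with the final kernel-TTRR cancellation, while the analogous accounting yields $B(x,n)$.
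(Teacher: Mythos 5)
Your proposal is correct and follows essentially the same route as the paper's own proof: differentiate the Christoffel identity $(x-a)p_n^*(a;x)=P_{n+1}^{\alpha,\beta}(x)-\lambda_nP_n^{\alpha,\beta}(x)$, multiply by $1-x^2$, apply the classical structure relation (\ref{StrctRelJac}), the Jacobi TTRR (\ref{RRJac}), the change of basis (\ref{pneqKnKn-1}), and finally the kernel TTRR (\ref{TTRRK}) with $\gamma_{n-1}^*/\gamma_{n-1}=\lambda_{n-1}/\lambda_{n-2}$ to remove $p_{n-2}^*$. The only slight imprecision is your parenthetical about $a_{n+1}(x-\beta_n)$: the factor $x-\beta_n$ comes solely from eliminating $P_{n+1}^{\alpha,\beta}$ via the recurrence, while $a_{n+1}$ is already a linear polynomial in $x$ by (\ref{StrctRelJac}); but you state this correctly later, so the argument stands.
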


\begin{proof}
Using the notation%
\begin{equation}
\lambda _{n}=\lambda _{n}^{\alpha ,\beta }(a)=\dfrac{P_{n+1}^{\alpha ,\beta
}\left( a\right) }{P_{n}^{\alpha ,\beta }\left( a\right) },  \label{ratios-3}
\end{equation}%
from (\ref{p1}) we get 
\begin{equation*}
\left( x-a\right) p_{n}^{\ast }(a;x)=P_{n+1}^{\alpha ,\beta }(x)-\lambda
_{n}P_{n}^{\alpha ,\beta }(x).
\end{equation*}%
Taking derivatives with respect to $x$ in both hand sides of the above
expression, and multiplying by $1-x^{2}$ in both members, we see that%
\begin{equation*}
\begin{array}{l}
(1-x^{2})p_{n}^{\ast }(a;x)+\left( x-a\right) (1-x^{2})D\left( p_{n}^{\ast
}(a;x)\right) \vspace{0.3cm} \\ 
=\displaystyle(1-x^{2})D\left( P_{n+1}^{\alpha ,\beta }(x)\right) -\lambda
_{n}(1-x^{2})D\left( P_{n}^{\alpha ,\beta }(x)\right) .%
\end{array}%
\end{equation*}%
From (\ref{StrctRelJac}), 
\begin{equation*}
(1-x^{2})D\left( P_{n}^{\alpha ,\beta }(x)\right) =a_{n}P_{n}^{\alpha ,\beta
}(x)+b_{n}P_{n-1}^{\alpha ,\beta }(x),
\end{equation*}%
where 
\begin{equation*}
\begin{array}{l}
a_{n}(x)=a_{n}^{\alpha ,\beta }(x)=\displaystyle\frac{-n[\beta -\alpha
+(2n+\alpha +\beta )x]}{2n+\alpha +\beta },\vspace{0.3cm} \\ 
b_{n}=b_{n}^{\alpha ,\beta }=\displaystyle\frac{4n(n+\alpha )(n+\beta
)(n+\alpha +\beta )}{(2n+\alpha +\beta )^{2}(2n+\alpha +\beta -1)}.%
\end{array}%
\end{equation*}%
Thus, 
\begin{equation*}
\begin{array}{l}
(1-x^{2})p_{n}^{\ast }(a;x)+\left( x-a\right) (1-x^{2})D\left( p_{n}^{\ast
}(a;x)\right) \vspace{0.3cm} \\ 
=a_{n+1}P_{n+1}^{\alpha ,\beta }(x)+\left( b_{n+1}-\lambda _{n}a_{n}\right)
P_{n}^{\alpha ,\beta }(x)-\lambda _{n}b_{n}P_{n-1}^{\alpha ,\beta }(x).%
\end{array}%
\end{equation*}%
Now, from the TTRR (\ref{RRJac}) for the classical Jacobi polynomials, we
get 
\begin{equation*}
\begin{array}{l}
(1-x^{2})p_{n}^{\ast }(a;x)+\left( x-a\right) (1-x^{2})D\left( p_{n}^{\ast
}(a;x)\right) \vspace{0.3cm} \\ 
=\left[ a_{n+1}(x-\beta _{n})+b_{n+1}-\lambda _{n}a_{n}\right] P_{n}^{\alpha
,\beta }(x)-(a_{n+1}\gamma _{n}+\lambda _{n}b_{n})P_{n-1}^{\alpha ,\beta
}(x).%
\end{array}%
\end{equation*}%
Now, from (\ref{pneqKnKn-1}) and (\ref{ratios-3}), 
\begin{equation*}
P_{n}^{\alpha ,\beta }(x)=p_{n}^{\ast }(a;x)-\frac{\gamma _{n}}{\lambda
_{n-1}}p_{n-1}^{\ast }(a;x),
\end{equation*}%
we obtain 
\begin{equation*}
\begin{array}{l}
(1-x^{2})p_{n}^{\ast }(a;x)+\left( x-a\right) (1-x^{2})D\left( p_{n}^{\ast
}(a;x)\right) \vspace{0.3cm} \\ 
\displaystyle=\left[ a_{n+1}(x-\beta _{n})+b_{n+1}-\lambda _{n}a_{n}\right] %
\left[ p_{n}^{\ast }(a;x)-\frac{\gamma _{n}}{\lambda _{n-1}}p_{n-1}^{\ast
}(a;x)\right] \vspace{0.3cm} \\ 
\displaystyle\hspace{0.5cm}-(a_{n+1}\gamma _{n}+\lambda _{n}b_{n})\left[
p_{n-1}^{\ast }(a;x)-\frac{\gamma _{n-1}}{\lambda _{n-2}}p_{n-2}^{\ast }(a;x)%
\right] \vspace{0.3cm} \\ 
\displaystyle=\left[ a_{n+1}(x-\beta _{n})+b_{n+1}-\lambda _{n}a_{n}\right]
p_{n}^{\ast }(a;x)\vspace{0.3cm} \\ 
\displaystyle\hspace{0.5cm}-\left[ \left( a_{n+1}(x-\beta
_{n})+b_{n+1}-\lambda _{n}a_{n}\right) \frac{\gamma _{n}}{\lambda _{n-1}}%
+a_{n+1}\gamma _{n}+\lambda _{n}b_{n}\right] p_{n-1}^{\ast }(a;x)\vspace{%
0.3cm} \\ 
\displaystyle\hspace{0.5cm}(a_{n+1}\gamma _{n}+\lambda _{n}b_{n})\frac{%
\gamma _{n-1}}{\lambda _{n-2}}p_{n-2}^{\ast }(a;x).%
\end{array}%
\end{equation*}%
Now, using the TTRR (\ref{TTRRK}) for monic kernels, 
\begin{equation*}
\begin{array}{l}
(1-x^{2})p_{n}^{\ast }(a;x)+\left( x-a\right) (1-x^{2})D\left( p_{n}^{\ast
}(a;x)\right) \vspace{0.3cm} \\ 
\displaystyle=\left[ a_{n+1}(x-\beta _{n})+b_{n+1}-\lambda _{n}a_{n}\right]
p_{n}^{\ast }(a;x)\vspace{0.3cm} \\ 
\displaystyle\hspace{0.5cm}-\left[ \left( a_{n+1}(x-\beta
_{n})+b_{n+1}-\lambda _{n}a_{n}\right) \frac{\gamma _{n}}{\lambda _{n-1}}%
+a_{n+1}\gamma _{n}+\lambda _{n}b_{n}\right] p_{n-1}^{\ast }(a;x)\vspace{%
0.3cm} \\ 
\displaystyle\hspace{0.5cm}(a_{n+1}\gamma _{n}+\lambda _{n}b_{n})\frac{%
\gamma _{n-1}}{\lambda _{n-2}}\frac{(x-\beta _{n-1}^{\ast })p_{n-1}^{\ast
}(a;x)-p_{n}^{\ast }(a;x)}{\gamma _{n-1}^{\ast }}.%
\end{array}%
\end{equation*}%
According to (\ref{coef-RRTTK}) and (\ref{ratios-3}), we obtain 
\begin{equation*}
\frac{\gamma _{n-1}^{\ast }}{\gamma _{n-1}}=\frac{\lambda _{n-1}}{\lambda
_{n-2}}.
\end{equation*}%
Therefore 
\begin{equation*}
\begin{array}{l}
(1-x^{2})p_{n}^{\ast }(a;x)+\left( x-a\right) (1-x^{2})D\left( p_{n}^{\ast
}(a;x)\right) \vspace{0.3cm} \\ 
\displaystyle=\left[ a_{n+1}(x-\beta _{n})+b_{n+1}-\lambda
_{n}a_{n}-(a_{n+1}\gamma _{n}+\lambda _{n}b_{n})\frac{1}{\lambda _{n-1}}%
\right] p_{n}^{\ast }(a;x)\vspace{0.3cm} \\ 
\displaystyle\hspace{0.5cm}-\left[ \left( a_{n+1}(x-\beta
_{n})+b_{n+1}-\lambda _{n}a_{n}\right) \frac{\gamma _{n}}{\lambda _{n-1}}%
+a_{n+1}\gamma _{n}+\lambda _{n}b_{n}\right. \vspace{0.3cm} \\ 
\hspace{5.5cm}\displaystyle\left. -(a_{n+1}\gamma _{n}+\lambda _{n}b_{n})%
\frac{x-\beta _{n-1}^{\ast }}{\lambda _{n-1}}\right] p_{n-1}^{\ast }(a;x).%
\end{array}%
\end{equation*}%
Thus 
\begin{equation*}
\phi (x)D(p_{n}^{\ast }(a;x))=A(x,n)p_{n}^{\ast }(a;x)+B(x,n)p_{n-1}^{\ast
}(a;x),
\end{equation*}%
where 
\begin{eqnarray*}
\phi (x) &=&(x-a)(1-x^{2}), \\
A(x,n) &=&a_{n+1}(x-\beta _{n})+b_{n+1}-\lambda _{n}a_{n}-(a_{n+1}\gamma
_{n}+\lambda _{n}b_{n})\frac{1}{\lambda _{n-1}}-1+x^{2}, \\
B(x,n) &=&\left( a_{n+1}(x-\beta _{n})+b_{n+1}-\lambda _{n}a_{n}\right) 
\frac{\gamma _{n}}{\lambda _{n-1}}+a_{n+1}\gamma _{n}+\lambda _{n}b_{n} \\
&&-(a_{n+1}\gamma _{n}+\lambda _{n}b_{n})\frac{x-\beta _{n-1}^{\ast }}{%
\lambda _{n-1}}.
\end{eqnarray*}%
Simplifying these expressions we have 
\begin{equation*}
\begin{array}{lll}
\displaystyle A(x,n) & = & A_{n,0}+A_{n,1}x+A_{n,2}x^{2}%
\end{array}%
\end{equation*}%
and 
\begin{equation*}
\begin{array}{lll}
B(x,n) & = & B_{n,0}+B_{n,1}x+B_{n,2}x^{2}.%
\end{array}%
\end{equation*}
\end{proof}

Notice that the Pearson equation for the linear functional associated with
the measure%
\begin{equation*}
d\mu ^{\ast }(x)=(x-a)(1-x)^{\alpha }(1+x)^{\beta }dx
\end{equation*}%
becomes%
\begin{equation*}
D\left( \phi u^{\ast }\right) =\psi u^{\ast },
\end{equation*}%
with (see (\ref{Pea1}) and (\ref{PearsonJac}))%
\begin{eqnarray*}
\phi (x) &=&(x-a)\sigma (x)=(x-a)(1-x^{2}), \\
\psi (x) &=&2\sigma (x)+(x-a)\tau (x)= \\
&&2(1-x^{2})+(x-a)(\beta -\alpha -(\alpha +\beta +2)x),
\end{eqnarray*}%
which means that in the Jacobi case, $Q\left( x\right) $ is the following
quadratic polynomial

\begin{equation*}
\begin{array}{lll}
Q\left( x\right) & = & \displaystyle B(x,n)+c_{n}\left[ -2A(x,n)+\phi
^{\prime }(x)-\psi (x)+\frac{c_{n}}{\gamma _{n-1}^{\ast }}B(x,n-1)\right] 
\vspace{0.3cm} \\ 
& = & \displaystyle\left[ B_{n,2}+\left( \alpha +\beta +1-2A_{n,2}+\frac{%
c_{n}B_{n-1,2}}{\gamma _{n-1}^{\ast }}\right) c_{n}\right] x^{2}\vspace{0.3cm%
} \\ 
&  & \displaystyle+\left\{ \frac{c_{n}^{2}B_{n-1,1}}{\gamma _{n-1}^{\ast }}%
+B_{n,1}-[\alpha (a-1)+\beta (a+1)+2A_{n,1}]c_{n}\right\} x\vspace{0.3cm} \\ 
&  & \displaystyle+B_{n,0}-[2A_{n,0}+1+a(\alpha -\beta )]c_{n}+\frac{%
c_{n}^{2}B_{n-1,0}}{\gamma _{n-1}^{\ast }}.%
\end{array}%
\end{equation*}

Taking into account%
\begin{eqnarray*}
\frac{\psi }{\phi } &=&\frac{2}{x-a}+\frac{\beta -\alpha -(\alpha +\beta +2)x%
}{1-x^{2}} \\
&=&\frac{2}{x-a}-\frac{\alpha +1}{1-x}+\frac{\beta +1}{1+x},
\end{eqnarray*}%
the electrostatic interpretation means that the equilibrium position for the
zeros under the presence of an external potential%
\begin{equation*}
\text{ln }Q(x)+\text{ln}\left( x-a\right) ^{2}(1-x)^{\alpha +1}(1+x)^{\beta
+1},
\end{equation*}%
where the first one is a short range potential corresponding to two unit
charges located at the zeros of $Q(x)$ and the second one is a long range
potential associated with a polynomial perturbation of the weight function.
\bigskip


\begin{thebibliography}{99}
\bibitem{WKB96} R. \'{A}lvarez-Nodarse, F. Marcell\'{a}n, and J. Petronilho, 
\emph{WKB Approximation and Krall-type Orthogonal Polynomials}, Acta Appl.
Math. \textbf{54} (1998), 27--58.

\bibitem{BracDimRanga02} C. F. Bracciali, D. K. Dimitrov, and A. Sri Ranga, 
\emph{Chain sequences and symmetric generalized orthogonal polynomials}, J.
Comput. Appl. Math. \textbf{143} (2002), 95--106.

\bibitem{Chi78} T. S. Chihara, \emph{An Introduction to Orthogonal
Polynomials.} Mathematics and its Applications Series, Gordon and Breach,
New York, 1978.

\bibitem{DimitrovMarcellanRafaeli} D. K. Dimitrov, F. Marcell\'{a}n, and F.
R. Rafaeli, \emph{Monotonicity of zeros of Laguerre-Sobolev type orthogonal
polynomials}, J. Math. Anal. Appl. \textbf{368} (2010), 80--89.

\bibitem{DimitrovMelloRafaeli} D. K. Dimitrov, M. V. Mello, and F. R.
Rafaeli, \emph{Monotonicity of zeros of Jacobi-Sobolev type orthogonal
polynomials}, Appl. Numer. Math. \textbf{60} (2010), 263--276.

\bibitem{Dini Maroni90} J. Dini and P. Maroni, \emph{La multiplication d'
une forme lin\'eaire par une forme rationnelle. Application aux polyn\^{o}%
mes de Laguerre-Hahn}, Ann. Polon. Math. \textbf{52} (1990), 175--185.

\bibitem{DuenasMarcellan07} H. Due\~{n}as and F. Marcell\'{a}n, \emph{%
Laguerre-Type orthogonal polynomials. Electrostatic interpretation}, Int. J.
Pure and Appl. Math. \textbf{38} (2007), 345--358.

\bibitem{DuenasMarcellan08} H. Due\~{n}as and F. Marcell\'{a}n, \emph{%
Jacobi-Type orthogonal polynomials: holonomic equation and electrostatic
interpretation}, Comm. Anal. Theory Cont. Frac.\textbf{15} (2008), 4--19.

\bibitem{Ism00-A} M. E. H. Ismail, \emph{An electrostatics model for zeros
of general orthogonal polynomials}, Pacific J. Math. \textbf{193} (2000),
355--369.

\bibitem{Ism00-B} M. E. H. Ismail, \emph{More on electrostatic models for
zeros of orthogonal polynomials}, Numer. Funct. Anal. Optimiz. \textbf{21}
(2000), 191--204.

\bibitem{Ism05} M. E. H. Ismail, \emph{Classical and Quantum Orthogonal
Polynomials in One Variable}, Encyclopedia of Mathematics and its
Applications, Vol. \textbf{98}. Cambridge University Press. Cambridge UK.
2005.

\bibitem{Gru98} F. A. Grunbaum, \emph{Variations on a theme of Heine and
Stieltjes: an electrostatic interpretation of the zeros of certain
polynomials}, J. Comput. Appl. Math. \textbf{99} (1998), 189--194.

\bibitem{Koekoek_1990} R. Koekoek, \emph{Generalizations of classical
Laguerre polynomials and some q-analogues}, Doctoral Dissertation, Techn.
Univ. of Delft, The Netherlands, 1990.

\bibitem{Koornwinder84} T. H. Koornwinder, \emph{Orthogonal polynomials with
weight function $(1-x)^{\alpha }(1+x)^{\beta }+M\delta (x+1)+N\delta (x-1)$}%
, Canad. Math. Bull. \textbf{27} (1984), 205--214.

\bibitem{MarcMar92} F. Marcellan and P. Maroni, \emph{Sur l' adjonction d'
une masse de Dirac \`{a} une forme r\'eguli\'ere et semi-classique} Annal.
Mat. Pura ed Appl \textbf{CLXII} (1992), 1--22.

\bibitem{MarcRon89} F. Marcellan and A. Ronveaux, \emph{Differential
equations for classical type orthogonal polynomials}, Canad. Math. Bull. 
\textbf{32} (1989), 404--411.

\bibitem{Mar91} P. Maroni,{\ }\emph{Une th\'{e}orie alg\'{e}brique des polyn%
\^{o}mes orthogonaux. Application aux polyn\^{o}mes orthogonaux
semi-classiques}, in\emph{\ Orthogonal Polynomials and Their Applications},
C. Brezinski et al. Editors. Annals. Comput. Appl. Math. \textbf{9}.
Baltzer, Basel. 1991, 95--130.

\bibitem{MarcRafa2010} F. Marcell\'{a}n and F. R. Rafaeli, \emph{A Note on
monotonicity of zeros of generalized Hermite-Sobolev type orthogonal
polynomials}. Integral Transforms and Special Functions (2010),
(doi:10.1080/10652461003714718)

\bibitem{NiUv88} A. F. Nikiforov and V. B. Uvarov, \emph{Special Functions
of Mathematical Physics: An unified approach}, Birkhauser Verlag, Basel.
1988.

\bibitem{Sho39} J. Shohat, \emph{A differential equation for orthogonal
polynomials}, Duke Math. J. \textbf{5} (1939), 401--417.

\bibitem{Sze75} G. Szeg\H{o}, \emph{Orthogonal Polynomials}, Amer. Math.
Soc. Coll. Publ, Vol. \textbf{23}, 4th ed., Amer. Math. Soc., Providence,
RI, 1975.

\bibitem{Uv69} V. B. Uvarov, \emph{The connection between systems of
polynomials that are orthogonal with respect to different distribution
functions}, J. Comput. Math. and Math. Phys. \textbf{\ 9} (1969), 25--36.

\bibitem{Zhe97} A. Zhedanov, \emph{Rational spectral transformations and
orthogonal polynomials}. J. Comput. Appl. Math. \textbf{85} (1997), 67--83.

\bibitem{Yoon} G. J. Yoon,\emph{\ Darboux transforms and orthogonal
polynomials}, Bull. Korean Math. Soc. \textbf{39} (2002), 359--376
\end{thebibliography}
\end{document}